\tikzstyle{V}=[fill=black,circle,scale=0.2, outer sep = 4pt]
\newtheorem{thm}{Theorem}[section]
\newtheorem{prop}[thm]{Proposition}
\newtheorem{cor}[thm]{Corollary}
\newtheorem{lemma}[thm]{Lemma}
\theoremstyle{remark}
\newtheorem{rmk}[thm]{Remark}
\newtheorem{example}[thm]{Example}
\theoremstyle{definition}
\newtheorem{defn}[thm]{Definition}
\DeclareMathOperator{\diam}{diam}
\DeclareMathOperator{\Span}{span}
\newcommand{\bi}{\begin{itemize}}
\newcommand{\ei}{\end{itemize}}
\newcommand{\be}{\begin{enumerate}}
\newcommand{\ee}{\end{enumerate}}
\newcommand{\C}{\mathbb{C}}
\newcommand{\R}{\mathbb{R}}
\newcommand{\N}{\mathbb{N}}
\newcommand{\Z}{\mathbb{Z}}
\newcommand{\BB}{\mathcal{B}}
\newcommand{\Tr}{\operatorname{Tr}}
\newcommand{\Dom}{\operatorname{Dom}}
\newcommand{\Card}{\operatorname{Card}}
\providecommand{\keywords}[1]{{\textit{Key words and phrases:}} #1}
\providecommand{\classification}[1]{{\textit{2010 Mathematics Subject Classification:}} #1}
\begin{document}

\title{Wavelets and spectral triples for fractal representations of Cuntz algebras}
\author{C. Farsi, E. Gillaspy, A. Julien, S. Kang,  and J. Packer}
\date{\today}

\maketitle

\begin{abstract} In this article we provide an identification between the wavelet decompositions of certain fractal representations of $C^*$ algebras of directed graphs of M.~Marcolli and A.~Paolucci~\cite{MP}, and  the eigenspaces of Laplacians associated to spectral triples constructed from Cantor fractal sets that are the infinite path spaces of Bratteli diagrams associated to the representations, with a particular emphasis on wavelets for representations of Cuntz $C^*$-algebras  $\mathcal{O}_D$.   In particular, in this setting we use results of J.~Pearson and J.~Bellissard \cite{pearson-bellissard-ultrametric}, and A.~Julien and J.~Savinien~\cite{julien-savinien-transversal}, to construct first the spectral triple and then the Laplace--Beltrami operator on the associated Cantor set.  We then prove that in certain cases, the orthogonal wavelet decomposition and the decomposition via orthogonal eigenspaces  match up precisely.  We give several explicit examples, including an example related to a Sierpinski fractal, and  compute in detail all the eigenvalues and corresponding eigenspaces of the Laplace--Beltrami operators for the equal weight case for representations of ${\mathcal O}_D$, and in the uneven weight case for certain representations of ${\mathcal O}_2,$ and show how the eigenspaces and wavelet subspaces at different levels first constructed in~\cite{FGKP-survey} are related.
\end{abstract}

\classification{46L05.}

\keywords{Weighted Bratteli diagrams; Ultrametric Cantor set; Spectral triples; Laplace Beltrami operators; Wavelets.}

\tableofcontents

\section{Introduction}

In the 2011 paper~\cite{MP}, M.~Marcolli and A.~Paolucci, motivated by work of A.~Jonnson \cite{jonsson} and R.~Strichartz~\cite{Strichartz}, studied representations of Cuntz--Krieger $C^{\ast}$-algebras on Hilbert spaces associated to certain fractals, and constructed what they termed ``wavelets'' in these Hilbert spaces.  These wavelets were so called because they provided an orthogonal decomposition of the Hilbert space, and the partial isometries associated to the $C^*$-algebra in question gave  ``scaling and translation'' operators taking one orthogonal subspace to another. The results of Marcolli and Paolucci were generalized first to certain fractal representations of $C^{\ast}$-algebras associated to directed graphs and then to representations of higher-rank graph $C^{\ast}$-algebras $C^*(\Lambda)$ by some of the authors of this article in~\cite{FGKP} and~\cite{FGKP-survey}.  The $k$-graph $C^{\ast}$-algebras  $C^*(\Lambda)$ of Robertson and Steger~\cite{RS} are particular examples of these higher-rank graph algebras, and it was shown in~\cite{FGKP}  that for these Robertson--Steger $C^{\ast}$-algebras there is  a faithful representation of $C^*(\Lambda)$ on $L^2(X, \mu)$, where $X$ is a fractal space with Hausdorff measure $\mu.$ Moreover, this Hilbert space 
also admits a wavelet decomposition -- that is, an orthogonal decomposition such that the representation of $C^*(\Lambda)$ is generated by ``scaling and translation'' operators that move between the orthogonal subspaces.
As in Marcolli and Paolucci's original construction, the wavelets in  \cite{FGKP} and \cite{FGKP-survey} had a characteristic structure, in that they were chosen to be orthogonal to a specific type of function in the path space that could be easily recognized.

Earlier, the theory of spectral triples and Fredholm modules of A. Connes had generated great interest \cite{connes}, and such objects had been constructed for dense subalgebras of several different classes of $C^{\ast}$-algebras, including the construction of spectral triples by E.~Christensen and C.~Ivan on the $C^{\ast}$-algebras of Cantor sets~\cite{Chr-Iva}, which in turn motivated the work of J.~Pearson and J.~Bellissard, who constructed spectral triples and related Laplacians on ultrametric Cantor sets \cite{pearson-bellissard-ultrametric}. Expanding on the work of Pearson and Bellisard, A.~Julien and J.~Savinien studied similarly constructed Laplacians on fractal sets constructed from substitution tilings \cite{julien-savinien-transversal}.  In both the papers of Pearson and Bellissard and of Julien and Savinien, after the Laplacian operators were described, spanning sets of functions for the eigenspaces of the Laplacian were explicitly described in terms of differences of characteristic functions.  

It became apparent to the authors of the current paper that certain components of the  wavelet system as described in \cite{FGKP} and the explicit eigenfunctions given by Julien and Savinien in \cite{julien-savinien-transversal} seemed related, and one of the aims of this paper is to analyze this similarity in the case of $C^{\ast}$-algebras of directed graphs as represented on their infinite path spaces.  Indeed, we will show that under appropriate  hypotheses,   each orthogonal subspace described in the wavelet decomposition of~\cite{FGKP}  can be expressed as a union of certain of the eigenspaces of the Laplace--Beltrami operator from~\cite{julien-savinien-transversal}. 
{We suspect that the hypotheses required for this result can be substantially weakened from their statement in Theorem~\ref{MPwaveletsthm} below, and plan to explore this question in future work~\cite{FGKJP-spectral-triples}.}

More broadly, the goal of this  paper is to elucidate the connections between graph $C^*$-algebras, wavelets on fractals, and spectral triples.  We focus here on the case of one particular directed graph, namely the graph $\Lambda_D$ which has $D$ vertices and, for each pair ($v,w)$ of vertices, a unique edge $e$ with source $w$ and range $v$.  Again, many of the results presented here will hold in greater generality; see the forthcoming paper \cite{FGKJP-spectral-triples} for details.

In this paper we introduce the graph $C^*$-algebra (also known as a Cuntz algebra) associated to $\Lambda_D$; discuss the associated representations on fractal spaces as in \cite{MP, FGKP}; and present  the associated  spectral triples and 
Laplace--Beltrami operators associated to (fractal) ultrametric  Cantor sets 
as adapted from recent work  by Julien-Savinien, Pearson and Bellissard, Christensen et al., see e.g.~\cite{julien-savinien-transversal, pearson-bellissard-ultrametric, Chr-Ivan-Schr}. In particular we show in Theorem~\ref{MPwaveletsthm} that 
when one constructs the Laplace--Beltrami operator of \cite{julien-savinien-transversal} associated to the  infinite path space of $\Lambda_D$  (which is  an ultrametric Cantor set), the wavelets in \cite{MP} are exactly the eigenfunctions of the Laplacian.  We then   compute in detail all the eigenfunctions and eigenvalues of the 
 Laplace--Beltrami operator associated to a representation 
 of the Cuntz algebra  $\mathcal{O}_D$ on a Sierpinski type fractal set (see \cite{MP} Section 2.6 and Section \ref{sec:sierp-rep} below) for the definition of this representation). For several different choices of a measure on the infinite path space of $\Lambda_D$, we also compute all the  
the eigenfunctions and eigenvalues of the associated Laplace--Beltrami operator; in the case when $D=2$ and this measure arises from assigning the two vertices of $\Lambda_D$ the weights $r$ and $1-r$ for some $r \in [0,1]$, we 
compare these results to wavelets associated to certain representations of ${\mathcal O}_2$ analyzed in Section 3 of  \cite{FGKP-survey}.

In further work \cite{FGKJP-spectral-triples} we will generalize these constructions to more general directed graphs and to higher-rank graphs, and also explain how to generalize certain other spectral triples associated to directed graphs, such as those described in  \cite{CPR}, \cite{Chr-Iva}, \cite{goffme},  \cite{Whittaker}, and \cite{julien-putnam}, to higher-rank graphs. 

The structure of the paper is as follows.  In Section 2, we review the definition of directed graphs, with an emphasis on finite graphs and the construction of both the infinite path space and Bratteli diagrams associated to finite directed graphs, the first as described in \cite{MP} among other places, and the second as described in~\cite{IR}.  When the incidence matrices for our graphs are $\{0,1\}$ matrices, the infinite path space can defined in terms of both edges and vertices, and we describe this correspondence, together with the identification of the infinite path space  $\Lambda^{\infty}$ with the associated infinite path space of the Bratteli diagram $\partial \BB_\Lambda$ for a finite directed graph $\Lambda.$ In so doing, we note that these spaces are Cantor sets.  We also review the semibranching function systems of K. Kawamura \cite{kawamura} and Marcolli and Paolucci \cite{MP} in this section, with an emphasis on those systems giving rise to representations of the Cuntz algebras ${\mathcal O}_D.$  In Section 3, we review representations of ${\mathcal O}_D$ on the $L^2$-spaces of Sierpinski fractals first constructed by Marcolli and Paolucci in \cite{MP}, and show that these representations are equivalent to the standard positive monic representations of ${\mathcal O}_D$ defined by D. Dutkay and P. Jorgensen in \cite{dutkay-jorgensen-monic}.  In Section 4, we review the construction of spectral triples associated to weighted Bratteli diagrams, described by Pearson and Bellissard in \cite{pearson-bellissard-ultrametric} and Julien and Savinien in \cite{julien-savinien-transversal}, and provide explicit details of  their construction for a variety of weights on the Bratteli diagram $\partial \BB_D$ associated to the graph $\Lambda_D$.
We describe in Theorem \ref{thm-Dixmier-trace-cuntz-algebra-O_D} the conditions under which the measure on $\partial \BB_{D}$ agrees with the measure  introduced by Marcolli and Paolucci, which we describe in Section 2.   We also introduce the Laplace--Beltrami operator of Pearson and Bellissard \cite{pearson-bellissard-ultrametric}  in this setting and review the specific formulas for its eigenvalues and associated eigenspaces. In Section 5 we review the construction of Marcolli and Paolucci's wavelets associated to representations of Cuntz--Krieger $C^*$-algebras on the $L^2$-spaces of certain fractal spaces, with the notation for these subspaces provided in earlier papers~\cite{FGKP, FGKP-survey} with an emphasis on representations of the Cuntz $C^*$-algebra $\mathcal{O}_D$, and prove our main theorem (Theorem  \ref{MPwaveletsthm}), which is that in all cases that we consider, the wavelet subspaces for Marcolli and Paolucci's representations can be identified with the eigenspaces of the Laplace--Beltrami operator associated to the related Bratteli diagram. In Section 6, we examine certain representations of ${\mathcal O}_D$ where the weights involved are unevenly distributed among the vertices of $\Lambda_D$, and specializing to the study of uneven weights associated to representations of ${\mathcal O}_2,$ we compute explicitly the associated eigenvalues and eigenspaces for the Laplace--Beltrami operatore in this case, and provide the correspondence between these eigenspaces and certain wavelet spaces for monic representations of ${\mathcal O}_2$ first computed in~\cite{FGKP-survey}. 

 This work was partially supported by a grant from the Simons Foundation (\#316981 to Judith Packer).

\section{Cantor sets associated to directed graphs}\label{sec:Cantor_dir}
We begin with a word about conventions.  Throughout this paper, $\N$ consists of the positive integers, $\N = \{1, 2, 3, \ldots \}$; we use $\N_0$ to denote $\{0, 1, 2, 3 \ldots \}$.  The symbol $\Z_N$ indicates the set $\{0, \ldots, N-1\}$.  

The Bratteli diagrams we discuss below do not have a root vertex; indeed, we think of the edges in a Bratteli diagram as pointing towards the zeroth level of the diagram.  See Remark \ref{rmk-root} for more details.

\subsection{Directed graphs and Bratteli diagrams}
\begin{defn}
\label{def:directed-graph}
A  \emph{directed graph} $\Lambda$ consists of a set of vertices $\Lambda^0$ and  a set of edges $\Lambda^1$ 
and  range and source maps $r,s:\Lambda^1\to \Lambda^0$. We say that $\Lambda$ is \emph{finite} if 
\[
  \Lambda^n= 
    \{ e_1 e_2 \ldots e_n: e_i \in \Lambda^1, r(e_i) = s(e_{i-1}) \ \forall \; i\}
\]
 is finite for all $n\in\N$. If $\gamma = e_1 \cdots e_n$, we define $r(\gamma)= r(e_1)$ and $s(\gamma) = s(e_n)$, and we write $|\gamma| = n$. By convention, a path of length $0$ consists of a single vertex (no edge): if $|\gamma| = 0$ then $\gamma = (v)$ for some vertex $v$.
 
 We say that $\Lambda$ has \emph{no sources} if $v\Lambda^n =\{\gamma\in\Lambda^n: r(\gamma)=v\}\ne \emptyset$  for all $v\in\Lambda^0$ and all $n \in \N$. We say that $\Lambda$ is \emph{strongly connected} if 
 \[
 v\Lambda w = \bigcup_{n\in \N} \{\gamma \in v \Lambda^n: s(\gamma) = w \}\ne \emptyset\]
  for all $v,w\in\Lambda^0$.
 In a slight abuse of notation, if  $\Lambda^n$ denotes the set of finite paths of length $n$, we denote by $\Lambda = \cup_{n\in \N_0} \Lambda^n$ the set of all finite paths,  and by $\Lambda^\infty$ the set of infinite paths of a finite directed graph $\Lambda$:
 \[
   \Lambda^\infty = \biggl\{ (e_i)_{i\in\N} \in \prod_{i=1}^\infty \Lambda^1: s(e_i) = r(e_{i+1}) \ \forall \ i \in \N  \biggr\}.
 \]
 For $\gamma \in \Lambda$, we write $[\gamma] \subseteq \Lambda^\infty$ for the set of infinite paths with initial segment~$\gamma$:
 \begin{equation}\label{eq:cylin}
   [e_1 \ldots e_n]  = \bigl\{  (f_i)_i \in \Lambda^\infty: f_i = e_i \ \forall \ 1 \leq i \leq n \bigr\}.
 \end{equation}
 We say that  a path $\gamma = e_1 \ldots e_n$ has length $n$ and write $|\gamma| = n$. If $\gamma = (v)$ is a path of length 0, then $[\gamma] = [v] = \{ (f_i)_i \in \Lambda^\infty : r(f_1) = v \}$.
 
  Given a finite directed graph $\Lambda$, the \emph{vertex matrix} $A$ of $\Lambda$ is an $\Lambda^0  \times \Lambda^0$ matrix with entry $A({v,w}) = | v\Lambda^1 w|$ counting the number of edges with range $v$ and source $w$ in $\Lambda$. 
\end{defn} 

\begin{rmk}
\label{rmk:top-on-inf-path}
 As shown in \cite{kprr} Corollary 2.2, if $\Lambda$ is finite and source-free, the cylinder sets $\{ [\gamma]: \gamma \in \Lambda\}$ form a compact open basis for a locally compact, totally disconnected, Hausdorff topology on $\Lambda^\infty$.\footnote{Note that if $\Lambda$ is finite, it is also row-finite, according to the definition given in Section 2 of \cite{kprr}.}  If $\Lambda$ is finite, $\Lambda^\infty$ is also compact. 
\end{rmk}

According to \cite{aHLRS3} Proposition 8.1, a strongly connected finite directed graph $\Lambda$ has a 
distinguished Borel measure $M$ on the infinite path space $\Lambda^\infty$ which is given in terms of the spectral radius $\rho(A)$ of the vertex matrix $A$;
\begin{equation}\label{eq:measure}
M([\gamma])=\rho(A)^{{-\vert\gamma\vert}}P_{s(\gamma)},
\end{equation}
where 
$(P_v)_{v\in\Lambda^0}$ is  the unimodular Perron--Frobenius eigenvector of the vertex matrix $A$. (See section~2 of \cite{FGKP} for details).

\begin{defn}
\label{def:bratteli-diagram}
Let $\Lambda$ be a finite directed graph with no sources. 
The \emph{Bratteli diagram} associated to $\Lambda$ is an infinite directed graph $\BB_\Lambda$, with the set of vertices $V=\sqcup_{n\ge 0} V_n$ and the set of edges $E=\sqcup_{n\ge 1}E_n$ such that
\begin{itemize}
\item[(a)] For each $n\in \N_0$, $V_n\cong\Lambda^0$ and $E_{n+1}\cong\Lambda^1$.
\item[(b)] There are a range map and a source map $r,s:E\to V$ such that $r(E_n)\subseteq V_{n-1}$ and $s(E_n) \subseteq V_n$ for all $n\in \N$. 
\end{itemize}
 A \emph{path} $\gamma$ of length $n\in\N$ in $\BB_\Lambda$ is an element 
\[
e_1e_2\dots e_n =(e_1,e_2,\dots, e_n)\in \prod_{i=1}^{n}E_n
\]
which satisfies $|e_i|=1$ $\forall i $, and  $s(e_i)=r(e_{i+1})$ for all $1\le i\le n-1$. We denote by $F\BB_\Lambda$ the set of all finite paths in the Bratteli diagram $\BB_\Lambda$, and by $F^n\BB_\Lambda$ the set of all finite paths in the Bratteli diagram $\BB_\Lambda$ of length $n$.

We denote by $\partial \BB_\Lambda$ the set of infinite paths in the Bratteli diagram $\BB_\Lambda$; 
\[
\partial \BB_\Lambda = \{ e_1e_2\dots =(e_1,e_2,\dots)\in \prod_{n=1}^{\infty}E_n : |e_i|=1,\  s(e_i) = r(e_{i+1}) \ \forall \ i \in \N\}.
\]
Given a (finite or infinite) path   $\gamma = e_1 e_2 \ldots$  in $\BB_\Lambda$ and $m \in \N$, we write 
\[\gamma[0,m]= e_1 e_2 \cdots e_m.\]
If $m = 0$ we write $\gamma[0,0] = r(\gamma).$
\end{defn}

\begin{rmk}
{Any finite path $\gamma$ of a length $n$ in a directed graph (or a Bratteli diagram) is given by a string of $n$ edges $e_1e_2\dots e_n$, which can be written uniquely as a string of vertices $v_0v_1\dots v_n$ such that $r(e_i)=v_{i-1}$ and $s(e_i)=v_i$ for $1\le i\le n$. Conversely,  if the vertex matrix $A$ has all entries either $0$ or 1 (as will be the case in all of our examples), a given string of vertices $v_0 v_1 \ldots v_n$ with $v_i \in V_n$ for all $n \in \N_0$ corresponds to at most one string of edges, and hence at most one finite path $\gamma$. Thus even though our formal definition of a path is given as a string of edges, sometimes we use the notation of a string of vertices for a path.}
\end{rmk}

\begin{rmk}
\label{rmk-root}
{Note that our description of a Bratteli diagram is different from the 
one in \cite{julien-savinien-transversal} and \cite{bezuglyi-jorgensen}. First, the edges in $E_n$ in \cite{julien-savinien-transversal} and in \cite{bezuglyi-jorgensen} have source in $V_n$ and range in $V_{n+1}$; in other words, they point in the opposite direction from our edges.  More substantially, though, in \cite{julien-savinien-transversal} and \cite{bezuglyi-jorgensen} every finite (or infinite) path in a Bratteli diagram  starts from a vertex called a root vertex, $\circ$, and any finite path that ends in $V_n$ is given by $\epsilon_{r(e_1)} e_1 e_2\dots e_n$, where for each vertex $v \in V_0$,  there is a unique edge $\epsilon_v$ connecting $\circ$ and $v$. This implies that a finite path that ends in $V_n$ consists of $n+1$ edges in their Bratteli diagram. However, our description of a Bratteli diagram in Definition \ref{def:bratteli-diagram} does not include a root vertex, and a finite path 
that ends in $V_n$ consists of $n$ edges. 
Thus, when we discuss Theorem~4.3 of \cite{julien-savinien-transversal} in Sections \ref{subsec-Laplace-Beltrami Operator-O-D} and \ref{sec:spect-triples-O-2} below, we will need to  introduce a single path, the ``empty path'' of length -1, which we will denote by $\gamma[0,-1]$ for any and all  paths $\gamma \in F\BB_\Lambda$.  The cylinder set of this path is $[\circ] = \partial \BB_\Lambda$ when we translate Theorem~4.3 of \cite{julien-savinien-transversal} to our setting. }
\end{rmk}

\begin{rmk}
\label{rmk:ident-graph-bratteli}
As is suggested by the notation, a finite directed graph and its associated Bratteli diagram encode the same information in their sets of finite and infinite paths.  We wish to emphasize this correspondence in this paper, to illuminate the way tools from a variety of disciplines combine to give us information about wavelets on fractals.
\end{rmk}

\begin{rmk}
If $\Lambda$ is a strongly connected finite directed graph, then $\Lambda$ has no sources by Lemma 2.1 of \cite{aHLRS3}. Hence every vertex of the associated Bratteli diagram $\BB_\Lambda$ also receives an edge.
\end{rmk}

\begin{example}\label{ex1}
Consider a directed graph $\Lambda$ with two vertices $v,w$ and four edges $f_1,f_2, f_3$ and $f_4$ given as follows:
\[
\begin{tikzpicture}[scale=1.5]
 \node[inner sep=0.5pt, circle] (v) at (0,0) {$v$};
    \node[inner sep=0.5pt, circle] (w) at (1.5,0) {$w$};
    \draw[-latex, thick] (w) edge [out=50, in=-50, loop, min distance=30, looseness=2.5] (w);
    \draw[-latex, thick] (v) edge [out=130, in=230, loop, min distance=30, looseness=2.5] (v);
\draw[-latex, thick] (w) edge [out=240, in=-60] (v) ;
\draw[-latex, thick] (v) edge [out=60, in=120] (w);
\node at (-0.7, 0) {\color{black} $f_1$}; 
\node at (0.7, 0.7) {\color{black} $f_2$};
\node at (0.7, -0.3) {\color{black} $f_3$};
\node at (2.2, 0) {\color{black} $f_4$};
\end{tikzpicture}
\]
Note that $\Lambda$ is finite and strongly connected, and (consequently) has no sources. The vertex matrix $A$ is given by
\[
A=\begin{pmatrix} 1 & 1\\ 1& 1\end{pmatrix},
\]
and the associated Bratteli diagram $\BB_\Lambda$ is 

\[
\begin{tikzpicture}[scale=1.5]
\node[inner sep=0.5pt, circle] (v0) at (0,1) {$v^0$};
\node[inner sep=0.5pt, circle] (w0) at (0,0) {$w^0$};
\node[inner sep=0.5pt, circle] (v1) at (1,1) {$v^1$};
\node[inner sep=0.5pt, circle] (w1) at (1,0) {$w^1$};
\node[inner sep=0.5pt, circle] (v2) at (2,1) {$v^2$};
\node[inner sep=0.5pt, circle] (w2) at (2,0) {$w^2$};
\node[inner sep=0.5pt, circle] (v3) at (3,1) {$v^3$};
\node[inner sep=0.5pt, circle] (w3) at (3,0) {$w^3$};
\draw[-latex, thick] (v1) edge (v0);    
\draw[-latex, thick] (v1) edge (w0);
\draw[-latex, thick] (w1) edge (v0);    
\draw[-latex, thick] (w1) edge (w0);
\draw[-latex, thick] (v2) edge (v1);    
\draw[-latex, thick] (v2) edge (w1);
\draw[-latex, thick] (w2) edge (v1);    
\draw[-latex, thick] (w2) edge (w1);
\draw[-latex, thick] (v3) edge (v2);    
\draw[-latex, thick] (v3) edge (w2);
\draw[-latex, thick] (w3) edge (v2);    
\draw[-latex, thick] (w3) edge (w2);
\node at (3.2, 1) {$.$}; \node at (3.3,1) {$.$}; \node at (3.4, 1) {$.$}; \node at (3.5,1) {$.$};
\node at (3.2, 0) {$.$}; \node at (3.3,0) {$.$}; \node at (3.4, 0) {$.$}; \node at (3.5,0) {$.$};
\end{tikzpicture}
\]
\end{example}

\begin{prop}
 Let $\Lambda$ be a finite directed graph. If every vertex $v$ in 
 the directed graph $\Lambda$ receives two distinct infinite paths, then $\Lambda^\infty$ (equivalently, $\partial \BB_\Lambda$) has no isolated points and hence it is a Cantor set. 
\end{prop}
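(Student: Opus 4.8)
The plan is to deduce this from Brouwer's topological characterization of the Cantor set: a nonempty, compact, metrizable, totally disconnected space with no isolated points is homeomorphic to the middle-thirds Cantor set. By Remark~\ref{rmk:top-on-inf-path}, $\Lambda^\infty$ is compact, Hausdorff and totally disconnected; it is metrizable because the topology has the countable basis $\{[\gamma]:\gamma\in\Lambda\}$ (a finite graph has only countably many finite paths) and a compact Hausdorff space with a countable basis is metrizable. Assuming $\Lambda^0\neq\emptyset$ (otherwise the hypothesis is vacuous and the statement has no content), any vertex receives an infinite path, so $\Lambda^\infty\neq\emptyset$. Hence everything reduces to showing that $\Lambda^\infty$ has no isolated points; the conclusion for $\partial\BB_\Lambda$ then follows via the identification of infinite path spaces recorded in Remark~\ref{rmk:ident-graph-bratteli}.

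Next I would reduce ``no isolated points'' to a statement about cylinder sets. If some $x\in\Lambda^\infty$ were isolated, then $\{x\}$ would be open, and since the cylinder sets form a basis (not merely a subbasis) for the topology, there would be a finite path $\gamma\in\Lambda$ with $x\in[\gamma]\subseteq\{x\}$, forcing $[\gamma]=\{x\}$. So it suffices to prove that no cylinder set is a singleton, i.e.\ that $[\gamma]$ contains at least two points for every $\gamma\in\Lambda$.

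To do this, fix $\gamma=e_1\cdots e_n\in\Lambda$ and set $v=s(\gamma)$ (when $|\gamma|=0$ we have $\gamma=(v)$ and $s(\gamma)=v$). By hypothesis $v$ receives two distinct infinite paths $\mu\neq\nu$ in $\Lambda^\infty$; writing $\mu=(f_i)_i$ and $\nu=(g_i)_i$, this means $r(f_1)=r(g_1)=v=s(e_n)$. The concatenations $\gamma\mu=e_1\cdots e_nf_1f_2\cdots$ and $\gamma\nu=e_1\cdots e_ng_1g_2\cdots$ are then well-defined elements of $\Lambda^\infty$ (the source/range condition holds at the splice since $s(e_n)=v=r(f_1)=r(g_1)$), both lie in $[\gamma]$ by definition of the cylinder set, and they are distinct because $\mu\neq\nu$. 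Therefore $[\gamma]$ has at least two points, no cylinder set is a singleton, and $\Lambda^\infty$ has no isolated points.

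I do not expect a serious obstacle: the mathematical content is just the combinatorial remark that every initial segment of an infinite path extends in at least two ways, combined with the standard facts assembled above. The only steps that require a little care are the reduction from ``isolated point'' to ``singleton cylinder set'' (which is immediate from the cylinder sets being a \emph{basis}) and making sure the hypotheses of Brouwer's characterization --- in particular metrizability and nonemptiness --- are genuinely in place, as noted in the first paragraph.
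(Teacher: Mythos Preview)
Your proof is correct and follows essentially the same approach as the paper: reduce to showing that $\Lambda^\infty$ has no isolated points, use that cylinder sets form a basis to conclude that an isolated point would force some $[\gamma]$ to be a singleton, and then contradict this by extending $\gamma$ at $s(\gamma)$ by the two distinct infinite paths guaranteed by the hypothesis. The paper phrases the last step contrapositively (if $[e_1\cdots e_n]=\{(e_i)_i\}$ then $s(e_m)$ receives only one infinite path for $m\ge n$), but it is the same argument. Your version is slightly more careful in that you explicitly invoke Brouwer's characterization and verify metrizability and nonemptiness; the paper simply takes ``Cantor set'' to mean totally disconnected, compact, and perfect, and so does not need these extra checks.
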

 
\begin{proof}
 Recall that a Cantor set is a totally disconnected, compact, perfect topological space.    Moreover, $\Lambda^\infty$ is always compact Hausdorff and totally disconnected by Corollary 2.2 of \cite{kprr}, so it will suffice to show that $\Lambda^\infty$ has no isolated points.

 Suppose $\Lambda^\infty$ has an isolated point $ (e_i)_{i\in\N}$.  Since the cylinder sets form a basis for the topology on $\Lambda^\infty$, this implies that there exists $n \in \N$ such that $[e_1 \cdots e_n]$ only contains $(e_i)_{i\in\N}$.  In other words, for each $m \geq n$, there is only one infinite path with range  $ s(e_m)$, contradicting the hypothesis of the proposition. 
\end{proof}

\begin{cor}
\label{cor:suff-cond-cantor}
 If $\Lambda$ is a finite directed graph with $\{0,1\}$ vertex matrix $A$ and  every row sum of $A$ is at least 2, then $\Lambda^\infty$ 
 (equivalently, $\partial \BB_\Lambda$) is a Cantor set. 
\end{cor}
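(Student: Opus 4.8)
The plan is to obtain Corollary~\ref{cor:suff-cond-cantor} as an immediate consequence of the preceding proposition, by checking that its hypothesis is met: namely, that every vertex of $\Lambda$ receives (is the range of) two distinct infinite paths. First I would unwind the meaning of the row-sum condition. By Definition~\ref{def:directed-graph}, the $v$-th row sum of the vertex matrix is $\sum_{w\in\Lambda^0} A(v,w) = |v\Lambda^1|$, the number of edges of $\Lambda$ with range $v$; so ``every row sum of $A$ is at least $2$'' says precisely that each vertex $v$ receives at least two distinct edges. (The $\{0,1\}$ hypothesis plays no role in the argument itself, but it is in force throughout the paper and guarantees that an edge is determined by its range and source.)

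Next I would record the routine observation that this hypothesis forces $\Lambda$ to have no sources in the sense of Definition~\ref{def:directed-graph}: since every vertex receives at least one edge, given any $v$ and any edge $e_1$ with $r(e_1)=v$, the vertex $s(e_1)$ again receives an edge $e_2$, and iterating yields, for every $n$, a path of length $n$ with range $v$, and in fact an infinite path $(e_i)_{i\in\N}\in\Lambda^\infty$ with $r(e_1)=v$. In other words, every finite path can be extended to an infinite one.

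Then, fixing an arbitrary vertex $v$, I would use the row-sum hypothesis to choose two distinct edges $e_1\neq e_1'$ with $r(e_1)=r(e_1')=v$, and extend each (using the previous paragraph) to infinite paths $\gamma=(e_1,e_2,\dots)$ and $\gamma'=(e_1',e_2',\dots)$ in $\Lambda^\infty$. Since $e_1\neq e_1'$, the paths $\gamma$ and $\gamma'$ are distinct and both have range $v$; hence every vertex of $\Lambda$ receives two distinct infinite paths. Applying the preceding proposition then gives that $\Lambda^\infty$ has no isolated points and is therefore a Cantor set, and the equivalence with $\partial\BB_\Lambda$ is the identification of infinite path spaces recalled in Section~\ref{sec:Cantor_dir}.

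I do not anticipate a genuine obstacle here: the only step needing any care is the extension of a finite path to an infinite path, and that is entirely routine once one notes that the no-sources property is itself a consequence of the row-sum hypothesis. The substantive work — total disconnectedness, compactness, and reducing ``Cantor set'' to ``no isolated points'' — has already been carried out in the proposition, so this proof is essentially a translation of the row-sum condition into the hypothesis of that proposition.
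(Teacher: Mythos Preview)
Your proposal is correct and follows essentially the same approach as the paper: both observe that the row-sum condition means every vertex receives at least two edges, from which the absence of isolated points follows. The paper's proof is terser---it argues directly that every cylinder set contains infinitely many elements rather than explicitly verifying the hypothesis of the preceding proposition---but your more careful route through that hypothesis (including the extension-to-infinite-paths step) amounts to the same thing.
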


\begin{proof}
 Note that the sum of the $v$th row  of $A$ represents the number of edges in $\Lambda$ with range $v$.  If every vertex receives at least two edges, then any cylinder set $[\gamma]$ will contain infinitely many elements, so $\Lambda^\infty$ has no isolated points.  
\end{proof}
Corollary \ref{cor:suff-cond-cantor} tells us that the infinite path space of Example~\ref{ex1} is a Cantor set.

\subsection{Cuntz algebras and representations on fractal spaces}
\begin{defn}[{\cite[Definition~2.1]{dutkay-jorgensen-monic}}] Fix an integer $D>1.$ The {\em Cuntz algebra ${\mathcal O}_D$} is the universal $C^{\ast}$-algebra generated by  isometries 
$\{T_i\}_{i=0}^{D-1}$ satisfying {the Cuntz relations}
\begin{equation}
\label{CK1}
T_j^{\ast}T_i\;=\;\delta_{ij}\text{I}, 
\end{equation}
and
\begin{equation}
\label{CK3}
\sum_{i=0}^{D-1}T_iT_i^{\ast}\;=\;\text{I}.
\end{equation}
\end{defn}
{The above definition of $\mathcal{O}_D$ is equivalent to the definition of $\mathcal{O}_{A_D}$ in the beginning of section 2 of \cite{MP} associated to the matrix $A_D$ that is a $D\times D$ matrix with 1 in every entry:
\begin{equation}\label{matrixAD}
   A_{D} = \begin{pmatrix} 1 & 1 & 1 & ...&1\\
                            1 & 1 & 1 & ...&1\\
                   \vdots & \vdots & \vdots & \vdots & \vdots \\
                            1 & 1 & 1 & ...&1 \\
                            1 & 1 & 1 & ...&1\end{pmatrix}.
\end{equation}

As had been done previously by K. Kawamura \cite{kawamura},  Marcolli and Paolucci constructed representations of $\mathcal{O}_D$ (and more generally, the Cuntz--Krieger algebras ${\mathcal O}_A$ associated to a matrix $A$) by employing the method of ``semibranching function systems.''  
We note for completeness that the semibranching function systems of Kawamura~\cite{kawamura} were for the most part defined on finite Euclidean spaces, e.g.\@ the unit interval $[0,1],$ whereas the semibranching function systems used by Marcolli and Paolucci~\cite{MP} were mainly defined on Cantor sets.

\begin{defn}[{cf.~\cite{kawamura}, \cite[Definition 2.1]{MP}, \cite[Definition 2.16]{bezuglyi-jorgensen} 
}] 
\label{semibranchingdef}
Let $(X,\mu)$ be a measure space, {fix an integer $D>1$ and let} 
$\{\sigma_i:X\to X\}_{i\in\mathbb Z_D}$ be a collection of $\mu$-measurable maps. 
The family of maps  $\{\sigma_i\}_{i\in\mathbb Z_D}$ is called a \emph{semibranching function system}
on $(X,\mu)$ with coding map $\sigma:X\to X$ 
if the following conditions hold:
\begin{enumerate}
  \item For $i\in \Z_D$, set $R_{[i]}=\sigma_i(X)$. Then we have 
  \[
  \mu(X\backslash \cup_{i\in\mathbb Z_D}R_{[i]})=0\;\;\;\text{and}\;\;\; \mu(R_{[i]}\cap R_{[j]})=0\;\;\text{for}\;\;i\not=j.
  \]
  \item  For $i\in \Z_D$, we have $\mu\circ \sigma_i \ll \mu$ and {the Radon--Nikodym derivative} satisfies
 \begin{equation}
 \label{semibranching}
 \frac{d(\mu\circ \sigma_i)}{d\mu}\;>\;0,\;\mu\text{-a.e.}
 \end{equation}
  \item  For all $i\in \Z_D$, we have $$\sigma\circ \sigma_i(x)\;=\;x,  \ \text{$\mu$-a.e}.$$
\end{enumerate}

\end{defn}
 
Kawamura and then Marcolli and Paolucci observed the following relationship between semibranching function systems 
and representations of ${\mathcal O}_D:$

\begin{prop}[{cf.~\cite[Proposition 2.4]{MP}, \cite[Theorem~2.22]{dutkay-jorgensen-monic}}]
\label{MPrepprop}
Let $(X,\mu)$ be a measure space, and let $\{\sigma_i:X\to X\}_{i\in\mathbb Z_D}$
be a semibranching function system 
on $(X,\mu)$ with coding map $\sigma:X\to X$. 
For each $i\in \mathbb Z_D$ define $S_i:L^2(X,\mu)\to L^2(X,\mu)$ by
$$S_i(\xi)(x)\;=\;\chi_{R_{[i]}}(x)\Big(\frac{d\mu\circ \sigma_i}{d\mu}(\sigma(x))\Big)^{-\frac{1}{2}}\xi(\sigma(x))\;\;\text{for $\xi\in L^2(X,\mu)$ and $x\in X$}.$$
Then the family $\{S_i\}_{i\in \mathbb Z_D}$  satisfies the 
 Cuntz relations Equations (\ref{CK1}) and (\ref{CK3}), and therefore generates a representation of the Cuntz algebra ${\mathcal O}_D.$
\end{prop}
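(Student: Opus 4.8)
The plan is to verify the Cuntz relations (\ref{CK1}) and (\ref{CK3}) directly on $L^2(X,\mu)$, the single technical tool being the change-of-variables formula attached to the Radon--Nikodym derivatives $\Phi_i := \frac{d(\mu\circ\sigma_i)}{d\mu}$. Recalling that $\mu\circ\sigma_i$ denotes the measure $A\mapsto\mu(\sigma_i(A))$, this formula reads
\[
  \int_{R_{[i]}} g\, d\mu \;=\; \int_X (g\circ\sigma_i)\,\Phi_i\, d\mu
\]
for measurable $g\ge 0$. Before using it I would record two $\mu$-a.e.\ consequences of Definition~\ref{semibranchingdef}: first, $\sigma_i\circ\sigma = \mathrm{id}$ on $R_{[i]}$ (write $x = \sigma_i(z)$ and apply condition (3)); second, since $\Phi_i>0$ a.e., the $\sigma|_{R_{[i]}}$-preimage of a $\mu$-null set $N$ equals $\sigma_i(N)$, of measure $\int_N\Phi_i\,d\mu = 0$, so that $\Phi_i\circ\sigma$ and $\xi\circ\sigma$ are defined a.e.\ on $R_{[i]}$ and $S_i$ honestly sends $L^2(X,\mu)$ into itself. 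Applying the displayed formula to $|S_i\xi|^2$, the Jacobian factor $\Phi_i$ cancels $\Phi_i^{-1}$ and one gets $\|S_i\xi\|_2 = \|\xi\|_2$; hence each $S_i$ is a well-defined isometry, in particular bounded, so that taking adjoints is legitimate.

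Next I would identify the adjoint. Pairing $S_i\xi$ with $\eta\in L^2(X,\mu)$, restricting the integral to $R_{[i]}$, and substituting via the change-of-variables formula (so that $\sigma(x)$ becomes the new variable and $\Phi_i^{-1/2}$ meets the Jacobian $\Phi_i$), one obtains $\langle S_i\xi,\eta\rangle = \langle \xi, S_i^*\eta\rangle$ with
\[
  (S_i^*\eta)(y) \;=\; \Phi_i(y)^{1/2}\,\eta(\sigma_i(y)).
\]
Then $S_j^*S_i$ is a direct substitution: for a.e.\ $y$ one has $\sigma(\sigma_j(y)) = y$ by condition (3), so $(S_j^*S_i\xi)(y) = \Phi_j(y)^{1/2}\Phi_i(y)^{-1/2}\,\chi_{R_{[i]}}(\sigma_j(y))\,\xi(y)$. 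If $i = j$ the indicator is identically $1$ and the two powers cancel, giving $S_i^*S_i = I$. If $i\ne j$, the set $\{y:\sigma_j(y)\in R_{[i]}\} = \sigma_j^{-1}(R_{[i]})$ maps under $\sigma_j$ into $R_{[i]}\cap R_{[j]}$, hence has $(\mu\circ\sigma_j)$-measure $0$ by condition (1), hence $\mu$-measure $0$ because $\Phi_j>0$ a.e.; so $\chi_{R_{[i]}}\circ\sigma_j = 0$ a.e.\ and $S_j^*S_i = 0$. This is relation (\ref{CK1}).

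For relation (\ref{CK3}) I would compute $S_iS_i^*$ the same way: $(S_iS_i^*\eta)(x) = \chi_{R_{[i]}}(x)\,\Phi_i(\sigma(x))^{-1/2}\Phi_i(\sigma(x))^{1/2}\,\eta(\sigma_i(\sigma(x)))$, which by the first a.e.\ fact above equals $\chi_{R_{[i]}}(x)\,\eta(x)$. Thus $S_iS_i^*$ is multiplication by $\chi_{R_{[i]}}$, and summing over $i\in\Z_D$ gives multiplication by $\sum_{i}\chi_{R_{[i]}}$. By condition (1) of Definition~\ref{semibranchingdef} ($\mu(X\setminus\bigcup_i R_{[i]}) = 0$ and $\mu(R_{[i]}\cap R_{[j]}) = 0$ for $i\ne j$) this sum equals $1$ $\mu$-a.e., so $\sum_{i=0}^{D-1}S_iS_i^* = I$. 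Having verified both Cuntz relations, the universal property of $\mathcal{O}_D$ then yields the asserted representation.

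I expect the only real friction to be measure-theoretic rather than conceptual: making precise that every composition occurring in $S_i$, $S_i^*$, and their products represents a genuine $L^2$ function (which is exactly the role of hypothesis (2), strict positivity of $\Phi_i$), and justifying the change-of-variables formula at the needed level of generality. The vanishing $\chi_{R_{[i]}}\circ\sigma_j = 0$ a.e.\ for $i\ne j$ is the one place where ``pushforward-null under $\sigma_j$'' must be upgraded to ``$\mu$-null,'' again via strict positivity of the Radon--Nikodym derivative.
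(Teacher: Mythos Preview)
Your proof is correct and complete: the change-of-variables identity, the adjoint computation, and the verifications of $S_j^*S_i=\delta_{ij}I$ and $\sum_i S_iS_i^*=I$ are all carried out carefully, and you have handled the measure-theoretic subtleties (null sets, strict positivity of $\Phi_i$) properly. The paper itself gives no proof of this proposition; it simply imports the result from \cite[Proposition~2.4]{MP} and \cite[Theorem~2.22]{dutkay-jorgensen-monic}, so your direct verification goes beyond what the paper supplies.
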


\begin{example}
\label{ex:inf-path-repn}
Let $\Lambda_D$ be the directed graph associated to the vertex matrix $A_D$. 
We can define a semibranching function system $\{(\sigma_i)_{i\in \Z_D}, \sigma\}$ on the Cantor set $(\Lambda_D^\infty, M)$ by thinking of  elements of $\Lambda_D^\infty$ as sequences of vertices $(v_i)_{i \in \N_0}$ with $v_j \in \Z_D \ \forall \, j$.  With this convention, we set 
\[ \sigma_i (v_0 v_1 v_2 \ldots ) = (i v_0 v_1 v_2 \ldots) \text{ and } \sigma(v_0 v_1 \ldots) = (v_1 v_2 \ldots).\]
Then the Radon--Nikodym derivative $\frac{d(M \circ \sigma_i)}{dM}$ is given by 
\[ \frac{d(M \circ \sigma_i)}{dM} = \frac{1}{D}\]
since the cylinder set $R_{[i]}$ has measure $\frac{1}{D}$ for all $i$,
and the associated operators $S_i$ are given by 
\[ S_i (\xi)( v_0 v_1 v_2 \ldots) = \left\{ \begin{array}{cl} \sqrt{D} \xi( v_1 v_2 \ldots) & \text{ if } v_0 = i \\
0 & \text{ else.} 
\end{array} \right. \]
This representation of $\mathcal{O}_D$ is faithful by Theorem 3.6 of \cite{FGKP}, since every cycle in $\Lambda_D$ has an entrance.
\end{example}

\begin{example}[{cf.~\cite[Proposition 2.6]{MP}}]
\label{ex:mp-fractal}
  Take {an integer} $D>1$, and let $K_D = \prod_{j=1}^{\infty}[\mathbb Z_D]_j$, {which is called the \emph{Cantor group} on $D$ letters in Definition~2.3 of~\cite{dutkay-jorgensen-monic}. As described in Section~2 of \cite{FGKP-survey}, $K_D$ has a Cantor set topology which is generated by cylinder sets
\[
[n]=\{(i_j)_{j=1}^\infty\in K_D : i_1=n\}.  
\] 
According to Section~3 of \cite{dutkay-jorgensen-monic}, there is a measure $\nu_D$ on $K_D$ given by
  \[
  \nu_D([n_1n_2\dots n_m])=\prod_{j=1}^m \frac{1}{D}=\frac{1}{D^m}.
  \]
 Note that $\nu_D$ is a Borel measure on $K_D$ with respect to the cylinder-set Cantor topology. }
 
For each $j \in \Z_D$,  define $\sigma_j$ {on $K_D$} by
\[
  \sigma_j\left( (i_1i_2\cdots i_k\cdots )\right )\;=\; (ji_1i_2\cdots i_k\cdots ).
\]
Then
\[
  R_{[j]} = {\sigma_j(K_D)}=\{(j i_1 i_2\cdots i_k\cdots ) \; : (i_1 i_2 \cdots i_k \cdots ) \in K_D\}=[j],
\]
and,
denoting by $\sigma$ the one-sided shift on $K_D, \ \sigma\left( (i_1i_2\cdots i_k\cdots )\right )= (i_2i_3\cdots i_{k+1}\cdots ),$
we have that $\sigma\circ \sigma_j(x)=x$  for all $x\in K_D$ and $j\in \mathbb Z_D.$
Marcolli and Paolucci show in Section 2.1 of \cite{MP} that this data  gives a semibranching function system.   Moreover, since the measure of each set $R_{[i]}$ is $\frac{1}{D}$, the Radon--Nikodym derivative $\frac{d(\nu_D\circ \sigma_i)}{d\nu_D}$ satisfies
\[
 \frac{d(\nu_D\circ \sigma_i)}{d\nu_D}\;=\;\frac{1}{D}.
\]
{Thus, Proposition~\ref{MPrepprop} implies that there is a family of operators $\{S_i\}_{i\in \Z_D} \subseteq B(L^2(K_D, \nu_D))$ that generates a representation of the Cuntz algebra $\mathcal{O}_D$.}  

Moreover, this representation is faithful by Theorem 3.6 of \cite{FGKP}. To see this, let $\Lambda_D$ denote the directed graph with vertex matrix $A_D$, and note that labeling the vertices of $\Lambda_D$ by $\{0, 1, \ldots, D-1\}$ allows us to identify an infinite path $(e_i)_{i\in \N} \in \partial \BB_D$ with the sequence $(r(e_i))_{i\in \N} \in K_D$.  Moreover, in this case the Perron--Frobenius eigenvector associated to $A_D$ is 
\[P = \biggl( \frac{1}{D}, \frac{1}{D}, \ldots, \frac{1}{D} \biggr) ,\]
and consequently 
 \[ M([e_1 \ldots e_n]) = \frac{1}{D^{n+1}} = \nu_D\bigl( [r(e_1) r(e_2) \cdots r(e_n) s(e_n)] \bigr).\]
Since the cylinder sets generate the topology on both $K_D$ and on $\partial \BB_D$, this identification is measure-preserving.  Thus, the representation $\{S_i\}_{i \in \Z_D}$ of $\mathcal{O}_D$ on $L^2(K_D, \nu_D)$ is equivalent to the infinite path representation of Example \ref{ex:inf-path-repn}.   We can apply Theorem 3.6 of \cite{FGKP} to this latter representation to conclude that it is faithful, since every cycle in the graph $\Lambda_D$ associated to $A_D$ has an entry.  
In other words, 
\[C^*\bigl( \{S_i\}_{i\in \Z_D} \bigr) \cong \mathcal{O}_D.\]
\end{example}

\section{The action of ${\mathcal O}_D$ on $L^2({\mathbb S}_A,H)$}\label{subs:Sierp-fractal}
As mentioned in the Introduction, we wish to show that when we represent $\mathcal{O}_D$ on a 2-dimensional Sierpinski fractal $\mathbb{S}_A$, this representation of $\mathcal{O}_D$ also gives rise to wavelets.  We will then  compare these wavelets with the eigenfunctions of the Laplace--Beltrami operator $\Delta_s$ of \cite{julien-savinien-transversal} that is associated to {$A_D$,} the $D \times D$ matrix of all 1's (that is, the matrix associated to the Cuntz algebra $\mathcal{O}_D$). To compare these functions, we will establish a measure-preserving isomorphism between $\mathbb{S}_A$ and the infinite path space of the directed graph (equivalently, Bratteli diagram) associated to $\mathcal{O}_D$ in this section. (See Theorem~\ref{thm:measure_preserving} below).

\subsection{The Sierpinski fractal representation for $\mathcal{O}_D$}

\label{sec:sierp-rep}
Let $N$ and $D$ be positive integers with $N\geq 2,$ and let $A$ be a $N\times N \{0,1\}$-matrix with exactly $D$ entries consisting of the number $1$.  Suppose that the nonzero entries of $A$ are in positions  $\{(a_j,b_j)\}_{j=0}^{D-1},$ where $a_j, b_j\in\{0,1\cdots,N-1\}$ and in a lexicographic ordering we have $(a_0,b_0)<(a_1,b_1)<\cdots <(a_{D-1},b_{D-1}).$ Here we say $(a,b)<(a',b')$ if either $a<a'$ or if $a=a'$ and $b<b'.$

In Section~2.6 of \cite{MP}, Marcolli and Paolucci defined the Sierpinski fractal associated to $A,\;\mathbb{S}_A\;\subset [0,1]^2,$ as follows:
\[
  {\mathbb S}_A = \;  \biggl\{ (x,y)= \biggl( \sum_{i=1}^\infty \frac{x_i}{N^i}, \sum_{i=1}^\infty \frac{y_i}{N^i} \biggr) 
   : \; x_i, y_i \in \mathbb Z_N, \; A_{x_i,y_i}=1, \; \forall i\in \mathbb N  \biggl\}.
\]
For each $j \in \Z_D$, we define

\begin{equation}\label{eq:MPsemi_fts_Sierp}
 \tau_{j}(x,y) = \biggl( \frac{x}{N}+\frac{a_j}{N}, \frac{y}{N}+\frac{b_j}{N} \biggr) \quad \text{and}\quad
 \tau(x,y) = \biggl( N \Bigl( x-\frac{x_1}{N} \Bigr), N \Bigl( y-\frac{y_1}{N} \Bigr) \biggr).
\end{equation}

Lemma 2.23 of \cite{MP} tells us that the operators $\{\tau_j\}_{j \in \Z_D}$ form a semibranching function system with coding map $\tau$, and hence determine a representation of the Cuntz algebra $\mathcal{O}_D$  associated to $A_D$ given in \eqref{matrixAD}, on the Hilbert space $L^2(\mathbb{S}_A, H)$.  
Here $H$ is the Hausdorff measure on the fractal $\mathbb{S}_A$. 

According to the work of Hutchinson \cite{hutch}, we have

\[
 {\mathbb S}_A = \bigcup_{i=1}^D \tau_j({\mathbb S}_A).
\]

Moreover, the work of \cite{hutch} shows that the Hausdorff measure $H$ on $\mathbb{S}_A$ 
is the unique Borel probability measure on ${\mathbb S}_A$ satisfying the self-similarity equation
\begin{equation}\label{eq:hausd-meas-scaling}
  H = \sum_{i=0}^{D-1} \frac{1}{D}(\tau_j)_*(H).
\end{equation}
In other words, 
\[
 H(\tau_j({\mathbb S}_A)) = \frac{1}{D}H({\mathbb S}_A)) = \frac{1}{D}.
\]
It follows that, since 
\[
  \tau_j ({\mathbb S}_A)) 
    = \biggl\{ \biggl( \sum_{i=1}^\infty \frac{x_i}{N^i}, \sum_{i=1}^\infty \frac{{y_i}}{N^i} \biggr): (x_1,y_1)=(a_j,b_j) \biggr\},
\]
\[
 H\biggl( \biggl\{ \sum_{i=1}^\infty \frac{x_i}{N^i},\sum_{i=1}^\infty \frac{y_i}{N^i})\in {\mathbb S}_A: (x_1,y_1)=(a_j,b_j) \biggr\} \biggr) = \frac{1}{D}.
\]
By repeatedly applying the measure-similitude equation \eqref{eq:hausd-meas-scaling} we obtain 
\begin{multline}
\label{eq:sierp-meas-formula}
H\biggl( \biggl\{ \Bigl( \sum_{i=1}^\infty \frac{x_i}{N^i},\sum_{i=1}^\infty \frac{y_i}{N^i} \Bigr) \in {\mathbb S}_A: 
  \forall 1 \leq i \leq M, \ (x_i,y_i)=(a_{j_i},b_{j_i}) \biggr\} \biggr)\\
 = H(\tau_{j_1}\circ\tau_{j_2}\circ \cdots \circ \tau_{j_M}(\mathbb S)_A)=\Big(\frac{1}{D}\Big)^M.
\end{multline}

\subsection{The measure-preserving isomorphism}
In this section, we discuss in more detail the relationship between the representation of $\mathcal{O}_D$ on $L^2(\mathbb{S}_A, H)$ and the  infinite path
 representation of $\mathcal{O}_D$ on $L^2(\partial \BB_D, M)$  described in Example~\ref{ex:inf-path-repn}.

First, we note that the Hausdorff dimension of the Sierpinski fractal $\mathbb{S}_A$ introduced above is 
$$\frac{\ln D}{\ln N},$$
as established in Hutchinson's paper \cite{hutch}.\footnote{This formula is not in line with~\cite[Equation~(2.64)]{MP}, which gives $\ln D /( 2 \ln N)$ for the Hausdorff dimension. However, said equation appears to be a typo: the dimension should be~$2$ when $D = N^2$ (i.e.\@ when $\mathcal S_A$ is the unit square).}
In particular, in the classical case of the Sierpinski triangle corresponding to the $2\times 2$ matrix 
$A = \begin{pmatrix} 1 & 0 \\ 1 & 1 \end{pmatrix},$
the Hausdorff dimension of $\mathbb{S}_A$ is
$\frac{\ln 3}{\ln 2}.$

The main goal of this section is to prove the following:

\begin{thm}\label{thm:measure_preserving}
 Let $A$ be the $N\times N$ matrix with entries consisting of only $0$'s and $1$'s with $D$ incidences of $1$'s in the entries $$(a_0,b_0)<(a_1,b_1)<\cdots <(a_{D-1},b_{D-1}),$$ where $a_j,b_j\in \mathbb Z_N.$
 Consider the Sierpinski gasket fractal 
 \[
   {\mathbb S}_A\;=\; \biggl\{ \Bigl( \sum_{i\in \N}\frac{x_i}{N^i}, \sum_{i\in \N}\frac{y_i}{N^i} \Bigr) :\;A(x_i,y_i)=1,\;\forall i\in\mathbb N\biggr\}.
 \]
 Then there is a measure-theoretic isomorphism 
 \[
  \Upsilon = \Phi \circ \Theta: (\partial \BB_D, M)\to ({\mathbb S}_A,H),
 \]
 where $(\partial \BB_D,M)$ is the infinite path space of the Bratteli diagram associated to the $D \times D$ matrix with all ones, and $M$ is the  measure given by Equation \eqref{eq:measure}:
 \[
  M[\gamma] = D^{-|\gamma| -1}.
 \]
 Moreover, if $\{S_i\}_{i \in \Z_D}$ denotes the infinite path representation of $\mathcal{O}_D$ on $(\partial \BB_\Lambda, M)$, and $\{T_i\}_{i \in \Z_D}$ denotes the representation of $\mathcal{O}_D$ on $(\mathbb{S}_A, H)$ associated to the semibranching function system \eqref{eq:MPsemi_fts_Sierp}, then  for all $i \in\Z_D$,
 \[
  T_i = S_i\circ \Upsilon.
 \]
\end{thm}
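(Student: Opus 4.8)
The plan is to construct $\Upsilon$ in the two stages named in the statement and then transport the semibranching data across it. For $\Theta$, I would identify an infinite path $(e_i)_{i\in\N}\in\partial\BB_D$ with its sequence of ranges $(r(e_i))_{i\in\N}\in K_D$; this is a bijection precisely because $A_D$ has all entries equal to $1$, so each consecutive pair of vertices is joined by a unique edge, and it is a homeomorphism for the cylinder topologies since $\partial\BB_D$ and $K_D$ are compact Hausdorff and $\Theta$ sends basic cylinders to basic cylinders. The computation already recorded in Example~\ref{ex:mp-fractal}, namely $M([e_1\cdots e_n])=D^{-n-1}=\nu_D([r(e_1)\cdots r(e_n)\,s(e_n)])$, shows $\Theta_*M=\nu_D$, so $\Theta\colon(\partial\BB_D,M)\to(K_D,\nu_D)$ is a measure-theoretic isomorphism.

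Next I would analyze $\Phi\colon K_D\to\mathbb S_A$ defined on $(j_i)_{i\in\N}\in K_D$ by
\[
 \Phi\bigl((j_i)_{i\in\N}\bigr)=\Bigl(\sum_{i=1}^\infty\frac{a_{j_i}}{N^i},\ \sum_{i=1}^\infty\frac{b_{j_i}}{N^i}\Bigr).
\]
Because $A(a_{j_i},b_{j_i})=1$ for every $i$ this point lies in $\mathbb S_A$, and conversely every point of $\mathbb S_A$ arises this way, so $\Phi$ is onto; it is continuous, being a coordinatewise base-$N$ expansion. The map $\Phi$ need not be injective, but two codings give the same point of $\mathbb S_A$ only through base-$N$ carrying, which occurs only on a countable — hence $\nu_D$-null — set; equivalently, the failure of injectivity is confined to the preimages of the overlaps $\tau_j(\mathbb S_A)\cap\tau_k(\mathbb S_A)$ for $j\ne k$, which are $H$-null by the semibranching property of $\{\tau_j\}_{j\in\Z_D}$ (Lemma~2.23 of~\cite{MP}). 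For the measures, $\Phi$ carries the cylinder $[j_1\cdots j_M]\subseteq K_D$ onto $\tau_{j_1}\circ\cdots\circ\tau_{j_M}(\mathbb S_A)$, which has $H$-measure $D^{-M}=\nu_D([j_1\cdots j_M])$ by the iterated self-similarity formula~\eqref{eq:sierp-meas-formula}; since cylinder sets form a $\pi$-system generating the Borel $\sigma$-algebra and both $H$ and $\Phi_*\nu_D$ are Borel probability measures, Dynkin's $\pi$--$\lambda$ theorem gives $\Phi_*\nu_D=H$. Hence $\Phi$, and therefore $\Upsilon=\Phi\circ\Theta$, is a measure-theoretic isomorphism.

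For the intertwining assertion, I would read $T_i=S_i\circ\Upsilon$ through the induced unitary $U_\Upsilon\colon L^2(\mathbb S_A,H)\to L^2(\partial\BB_D,M)$, $U_\Upsilon\xi=\xi\circ\Upsilon$, and prove $U_\Upsilon T_i=S_iU_\Upsilon$. The heart of the matter is that $\Upsilon$ conjugates the two semibranching function systems: writing $\sigma_i$ and $\sigma$ for the prepend-$i$ maps and the shift on $\partial\BB_D$ (equivalently on $K_D$), and $\tau_i,\tau$ for the maps of~\eqref{eq:MPsemi_fts_Sierp}, a direct check on sequences gives $\Upsilon\circ\sigma_i=\tau_i\circ\Upsilon$ on all of $\partial\BB_D$, $\Upsilon(R_{[i]})=\tau_i(\mathbb S_A)$, and $\Upsilon\circ\sigma=\tau\circ\Upsilon$ off a null set, while both Radon--Nikodym cocycles are the constant $\tfrac1D$. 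Substituting these identities into the formula of Proposition~\ref{MPrepprop} for $S_i$ and for $T_i$ then yields $S_i(\xi\circ\Upsilon)=(T_i\xi)\circ\Upsilon$ for $\xi\in L^2(\mathbb S_A,H)$, which is exactly $U_\Upsilon T_i=S_iU_\Upsilon$, i.e.\ $T_i=U_\Upsilon^*S_iU_\Upsilon$.

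The main obstacle is not a single hard estimate but the bookkeeping around null sets: one must verify that $\Phi$ is injective modulo $H$-null sets — reducing, as above, to the measure-zero overlaps supplied by the semibranching property — and then ensure that the a.e.\ identities used in the last step, in particular $\Upsilon\circ\sigma=\tau\circ\Upsilon$, all hold off one common conull set, so that the final intertwining computation is valid $M$-a.e. Once $\Upsilon$ is known to be measure-preserving on cylinders, the remaining verifications are routine.
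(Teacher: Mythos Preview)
Your outline matches the paper's proof: both factor $\Upsilon$ through a symbolic product space (you use $K_D$, the paper the canonically isomorphic $X_A=\prod_{i\geq 1}S_A$), check measure preservation on cylinders, and treat the intertwining as a direct computation, which you spell out more carefully than the paper does.

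There is, however, one genuine error. Your claim that the set on which $\Phi$ fails to be injective is \emph{countable} is false in general. Take $N=2$, $D=4$, and $A$ the $2\times 2$ all-ones matrix, with symbols $(0,0),(0,1),(1,0),(1,1)$. Any sequence $(j_i)\in K_4$ with $a_{j_1}=0$ and $a_{j_i}=1$ for all $i\ge 2$ has $x$-coordinate $1/2$; swapping the $a$-digits to $(1,0,0,\ldots)$ while keeping each $b_{j_i}$ fixed produces a distinct sequence in $K_4$ with the same image. Since the $b$-digits are unconstrained here, this already gives uncountably many points in the failure locus. Base-$N$ carrying pins one coordinate to an eventually constant tail, but the \emph{other} coordinate remains free.

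Your fallback via the $H$-null overlaps $\tau_j(\mathbb S_A)\cap\tau_k(\mathbb S_A)$ is the right idea, but as written it loops: passing from $H$-null overlaps to $\nu_D$-null preimages uses $\Phi_*\nu_D=H$, while your Dynkin step tacitly identifies $\Phi^{-1}\bigl(\tau_{j_1}\cdots\tau_{j_M}(\mathbb S_A)\bigr)$ with $[j_1\cdots j_M]$, an equality that holds only modulo the very failure set you are trying to control. The paper severs this loop by bounding the $\nu_D$-measure of the failure set directly, without reference to $H$: for carrying in the $x$-digit to occur at all, $A$ must have a $1$ on both row $0$ and row $N-1$, so each of those rows carries at most $D-1$ of the $D$ ones, and the set of $(j_i)$ with $a_{j_i}$ eventually $N-1$ (respectively eventually $0$) has $\nu_D$-measure at most $\bigl((D-1)/D\bigr)^n$ for every $n$, hence is null; the same argument handles the $y$-digit. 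With this established first, the rest of your outline goes through unchanged.
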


\begin{proof}
Let $S_A$ denote the $D$-element symbol space of pairs from $\mathbb Z_N$ with $1$'s in the corresponding entry of $A:$ 
$$S_A=\{(a_0,b_0), (a_1,b_1), (a_2,b_2),\cdots, (a_{D-1},b_{D-1})\}\subset \mathbb Z_N\times \mathbb Z_N,$$
and let $X_A$ be the infinite product space $X_A=\;\prod_{i=1}^{\infty}S_A.$  Giving $S_A$ the discrete topology and $X_A$ the product topology, we see that $X_A$ is  a 
Cantor set, by the arguments of Section 2 of \cite{FGKP-survey}.  For every $i\in \mathbb N,$ let $\mu_{i,A}$ be the normalized counting measure on $S_A$; that is,  for $S\subset S_A,$
$$\mu_{i,A}(S)\;=\; \frac{\#(S)}{D},$$
and let $\mu_A$ denote the infinite product measure $\mu_A=\prod_{i=1}^{\infty}(\mu_{i,A}).$
Note if we let 
\begin{equation}\label{eq:Serpin_cylin}
 \bigl[(a_{j_1},b_{j_1})(a_{j_2},b_{j_2})\cdots (a_{j_M},b_{j_M})\bigr]
\end{equation}
 denote the cylinder set 
\[\begin{split}
&[(a_{j_1},b_{j_1})(a_{j_2},b_{j_2})\cdots (a_{j_M},b_{j_M})]\\
&\;\;=\{\left((x_i,y_i)\right)_{i=1}^{\infty}\in X_A: (x_i,y_i)=(a_{j_i},b_{j_i}) \ \forall \ 1\leq i\leq M\},
\end{split}\]
then 
$$\mu_A([(a_{j_1},b_{j_1})(a_{j_2},b_{j_2})\cdots (a_{j_M},b_{j_M})])=\frac{1}{D^M}.$$
Define now a map $\Phi: X_A\to {\mathbb S}_A$ by 
\[
 \Phi\bigl( \left((x_i,y_i)\right)_{i=1}^{\infty} \bigr) =  \Bigl( \sum_{i=1}^{\infty}\frac{x_i}{N^i}, \sum_{i=1}^{\infty}\frac{y_i}{N^i} \Bigr).
\]
The map $\Phi$ is continuous from the product topology on $X_A$ to the topology on ${\mathbb S}_A$ inherited from the Euclidean topology on $[0,1]\times [0,1].$  The map $\Phi$ is {\bf not} one-to-one, but if we let $E\subset X_A$ denote the set of points on which $\Phi$ is not injective, 
$\mu_A(E)=0$.
Indeed, let's examine the set of points of $X_A$ where $\Phi$ may not be one-to-one: non-injectivity can come from pairs of sequence of the forms $(x_i, y_i)_i$, $(x'_i, y'_i)_i$ where $x_i$ is eventually $N-1$ and $x'_i$ is eventually $0$, 
and similarly exchanging $x$ and $y$.  
Notice also that if $A$ has no ones either on the first or on the last row, there will be no such pairs for which $x_i$ is eventually $N-1$ and $x'_i$ is eventually $0$. Therefore, since $A$ has $D$ total entries equaling 1, if two such pairs $(x_i, y_i)_i$ and $(x'_i, y'_i)_i$ are going to have the same image under $\Phi$, there need to be at most $D-1$ ones on the first row, and the same on the last row.
Therefore, the measure of the set of pairs $(x_i, y_i)_i$ for which $x_i$ is eventually $N-1$ is smaller than $[(D-1)/D]^n$ for all $n$: it has zero measure. We reason similarly for the set of pairs $(x_i, y_i)$ for which $x_i$ is eventually $0$, for which $y_i$ is eventually $0$ and for which $y_i$ is eventually $N-1$. In conclusion, the set of points in $X_A$ on which $\Phi$ has a risk of not being one-to-one has measure zero.

We also note that since $\Phi$ is continuous, it is a Borel measurable map, and that for any Borel subset $B$ of $\mathbb{S}_A$, 
$$\mu_A\circ [\Phi]_*(B)\;=H(B).$$
This is the case because a length-$M$ cylinder set in ${\mathbb S}_A$ (that is, any cylinder  set \\
$\bigl[ (x_1, y_1), \ldots, (x_M, y_M) \bigr]$ consisting of all 
points in ${\mathbb S}_A$ whose first $M$ pairs of $N$-adic digits are fixed) has $H$-measure $\frac{1}{D^M},$ whereas when one pulls such sets back via $\Phi,$ we obtain cylinder sets  of the form  
\[
\bigl[(a_{j_1},b_{j_1})(a_{j_2},b_{j_2})\cdots (a_{j_M},b_{j_M}) \bigr] \subseteq X_A
\]
which also have measure ${D^{-M}}.$  Since these sets generate the Borel $\sigma$-algebras for ${\mathbb S}_A$ and $X_A$ respectively, we get the desired equality of the measures. 

Now let $\BB_D$ be the Bratteli diagram with $D$ vertices at each level, associated to {the matrix $A_D$ given in \eqref{matrixAD} (and, hence, to the directed graph $\Lambda_D$ with $D$ vertices and all possible edges)}. We equip the infinite path space $\partial \BB_D$ with the measure of Equation~\eqref{eq:measure}, which in this case is $M([\gamma]) = D^{-|\gamma|-1}$.
Label the vertices of $\Lambda^0$ by $\Z_D = \{0, 1, \ldots, D-1\}$, and define $\Theta : \partial \BB_D \rightarrow X_A$ by 
 \[
  \Theta((e_i)_{i \geq 1}) = \bigl( (a_{r(e_1)}, b_{r(e_1)}), (a_{r(e_2)}, b_{r(e_2)}), (a_{r(e_3)}, b_{r(e_3)}), \ldots)  \bigr);
 \] 
in other words, $\Theta$ takes an infinite path (written in terms of edges) $(e_i)_{i\in\N}$ to the sequence of vertices $(r(e_i))_{i\in\N}$ it passes through, and then maps this sequence of vertices to the corresponding element of $X_A$.
The map $\Theta$ is   bijective, since each pair of vertices has exactly one edge between them.  In addition, both $\Theta$ and $\Theta^{-1}$ are continuous, since both the topology on $\partial \BB_D$ and the topology on $X_A$ are generated by cylinder sets.  In other words, $\Theta$
is a homeomorphism, and $M=\mu_A \circ [\Theta]_*$.

We thus have shown that $\Upsilon= \Phi \circ \Theta$ is a Borel measure-theoretic isomorphism between the measure spaces $(\partial \BB_D, M)$ and $({\mathbb S}_A,H)$.  A routine computation, using the fact that 
\[ \Upsilon ((e_i)_{i\in \N}) = \left( \sum_{i\in \N} \frac{a_{r(e_i)}}{N^i}, \sum_{i\in \N} \frac{b_{r(e_i)}}{N^i} \right) ,\]
will show that for any $i \in \Z_D$, $T_i = S_i \circ \Upsilon$ to finish the proof.
\end{proof}

We now  recall the definition of Dutkay and Jorgensen \cite{dutkay-jorgensen-monic} of a {\it monic} representation of ${\mathcal O}_D:$

\begin{defn}[{cf.~\cite[Definition 2.6]{dutkay-jorgensen-monic}}]
\label{def-equiv-measures-O-D}
Let $D\in\mathbb N,$ and let $K_D$ be the infinite product Cantor group defined earlier. Let $\sigma_i:K_D\to K_D,\;0\leq i\leq D-1$ be 
as in Example \ref{ex:mp-fractal}.  A {\it nonnegative monic system} is a pair $(\mu, (f_i)_{i\in\mathbb Z_D})$ where $\mu$ is a Borel probability measure on $K_D$ and $(f_i)_{i\in\mathbb Z_D}$ are nonnegative Borel measurable functions in $L^2(K_D,\mu)$ such that  $\mu\circ \sigma_i^{-1} \ll \mu,$ and such that for all $i\in\mathbb Z_D$
$$\frac{d(\mu\circ \sigma_i^{-1})}{d\mu}=(f_i)^2$$
with the property that $f_i(x)\not=0,\;\mu$ a.e. on $\sigma_i(K_D),\;\forall i\in \Z_D.$  
\end{defn}
By Equation (2.9) of \cite{dutkay-jorgensen-monic}, there is a natural representation of ${\mathcal O}_D$ on $L^2(K_D, \mu)$ associated to a monic system  $(\mu, (f_i)_{i\in\mathbb Z_D})$ given by 
$$\tilde{S}_if\;=\;f_i(f\circ \sigma),\;(i\in\mathbb Z_D,\;f\in L^2(K_D, \mu)).$$
If $\mu=\nu_D$ 
and we set $f_i={\sqrt{D}}\chi_{\sigma_i(K_D)},$ the corresponding monic system is called the {\it standard positive monic system} for ${\mathcal O}_D.$ 

\begin{cor} 
\label{cor-equiv-measures-O-D} The representation of ${\mathcal O}_D$ on $L^2({\mathbb S}_A,H)$ described
in Section~\ref{sec:sierp-rep} above is equivalent to the monic representation of ${\mathcal O}_D$ corresponding  to the standard positive monic system on $L^2(K_D,\nu_D)$.   
\end{cor}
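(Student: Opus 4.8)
The plan is to assemble a short chain of unitary equivalences whose backbone is already established. By Theorem~\ref{thm:measure_preserving}, the measure-theoretic isomorphism $\Upsilon\colon (\partial\BB_D, M)\to(\mathbb{S}_A,H)$ induces a unitary $U_\Upsilon\colon L^2(\mathbb{S}_A,H)\to L^2(\partial\BB_D,M)$, $U_\Upsilon\xi=\xi\circ\Upsilon$, and the relation $T_i=S_i\circ\Upsilon$ recorded in that theorem says precisely that $U_\Upsilon$ intertwines the representation $\{T_i\}_{i\in\Z_D}$ on $L^2(\mathbb{S}_A,H)$ with the infinite path representation $\{S_i\}_{i\in\Z_D}$ on $L^2(\partial\BB_D,M)$ of Example~\ref{ex:inf-path-repn}. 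Next I would invoke the identification already carried out in Example~\ref{ex:mp-fractal}: labelling the vertices of $\Lambda_D$ by $\Z_D$ sends an infinite path $(e_i)_{i\in\N}\in\partial\BB_D$ to its sequence of ranges $(r(e_i))_{i\in\N}\in K_D$, this is a measure-preserving homeomorphism $(\partial\BB_D,M)\to(K_D,\nu_D)$, and under the induced unitary the infinite path representation is carried to the Marcolli--Paolucci representation $\{S_i'\}_{i\in\Z_D}$ on $L^2(K_D,\nu_D)$ obtained by applying Proposition~\ref{MPrepprop} to the semibranching function system $\{\sigma_j\}_{j\in\Z_D}$ on $K_D$.

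It then remains only to recognise this last representation as the monic representation of the standard positive monic system itself, not merely as something equivalent to it. For the standard positive monic system one has $\mu=\nu_D$ and $f_i=\sqrt D\,\chi_{\sigma_i(K_D)}=\sqrt D\,\chi_{[i]}$ (using $\sigma_i(K_D)=[i]$), so by the formula following Definition~\ref{def-equiv-measures-O-D} the associated representation acts by $\tilde S_i f(x)=\sqrt D\,\chi_{[i]}(x)\,f(\sigma(x))$, where $\sigma$ is the one-sided shift on $K_D$. On the other hand, since $R_{[i]}=\sigma_i(K_D)=[i]$ and the Radon--Nikodym derivative $\frac{d(\nu_D\circ\sigma_i)}{d\nu_D}$ is the constant $\tfrac1D$, the formula of Proposition~\ref{MPrepprop} gives $S_i'(\xi)(x)=\chi_{[i]}(x)\,(1/D)^{-1/2}\,\xi(\sigma(x))=\sqrt D\,\chi_{[i]}(x)\,\xi(\sigma(x))$. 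Hence $S_i'=\tilde S_i$ on $L^2(K_D,\nu_D)$ for every $i\in\Z_D$, and concatenating the three steps produces the desired unitary equivalence between the representation of $\mathcal{O}_D$ on $L^2(\mathbb{S}_A,H)$ and the standard positive monic representation on $L^2(K_D,\nu_D)$.

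I do not expect a genuine obstacle; the only care needed is in the bookkeeping. One must keep straight the two conventions for ``twisting $\mu$ by $\sigma_i$'' -- the pushforward $\mu\circ\sigma_i^{-1}$ in the definition of a monic system versus the measure $\mu\circ\sigma_i$ (that is, $B\mapsto\mu(\sigma_i(B))$, well defined because each $\sigma_i$ is injective) in the definition of a semibranching function system -- and verify that in the case at hand these give the Radon--Nikodym derivatives $D\chi_{[i]}$ and $\tfrac1D$ respectively, so that $f_i=\sqrt D\,\chi_{[i]}$ really is the correct square root making the two formulas agree. One must also follow the directions of the induced unitaries through the composition $\Upsilon=\Phi\circ\Theta$ and the harmless relabelling of the symbol space $X_A$ of Theorem~\ref{thm:measure_preserving} as $K_D$; since every map in sight is an honest bijection off a $\mu_A$-null set, no new measure-zero issues arise beyond the set $E$ already dispatched in the proof of Theorem~\ref{thm:measure_preserving}.
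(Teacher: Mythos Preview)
Your proposal is correct and follows essentially the same route as the paper: assemble the measure-preserving isomorphism $(\mathbb{S}_A,H)\cong(\partial\BB_D,M)\cong(K_D,\nu_D)$ from Theorem~\ref{thm:measure_preserving} and Example~\ref{ex:mp-fractal}, then verify by the direct computation $\tilde S_i(\xi)(v_0v_1\ldots)=\sqrt{D}\,\chi_{[i]}(v_0v_1\ldots)\,\xi(v_1v_2\ldots)=S_i(\xi)(v_0v_1\ldots)$ that the standard positive monic operators coincide with the infinite path operators. Your write-up is, if anything, more scrupulous than the paper's about the Radon--Nikodym bookkeeping and the direction of the intertwiners.
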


\begin{proof}
 Theorem \ref{thm:measure_preserving}, combined with the measure-theoretic identification of $(K_D, \nu_D)$ and $(\partial \BB_D, M)$ established in Example \ref{ex:mp-fractal}, implies that we have a measure-theoretic isomorphism between $(K_D, \nu_D)$ and $(\mathbb{S}_A, H)$.  Thus, to show that the corresponding representations of $ {\mathcal O}_D$ are unitarily equivalent, it only remains to check that the operators $\tilde{S}_i = f_i (f \circ \sigma)$ associated to the standard positive monic system, and the operators $\{T_i\}_{i \in \Z_D}$, match up correctly.  To that end, observe that 
 \begin{align*}
  \tilde{S}_i (\xi)(v_0 v_1 \ldots ) &  = f_i(v_0 v_1 \ldots ) \xi( v_1 v_2 \ldots)
           = \begin{cases} 
                 \sqrt{D} \xi(v_1 v_2 \ldots) & \text{ if } v_0 = i \\ 
                 0 & \text{ else.}
             \end{cases} \\
       & = S_i(\xi)(v_0 v_1 \ldots).
 \end{align*}
 Since Theorem \ref{thm:measure_preserving} established that the operators $S_i$ and $T_i$ are unitarily equivalent, the Corollary follows.
\end{proof}

\section{Spectral triples  and Laplacians for Cuntz algebras}
\label{sec:spect-triples}

Let $A_D$ be the $D\times D$ matrix with 1 in every entry and consider the  Bratteli diagram $\BB_D$ associated to $A_D$. If $D\ge 2$, then every row sum of $A_D$ is at least 2 by construction, and hence the associated infinite path space of the Bratteli diagram, $\partial \BB_D$, is a Cantor set. 
In this section, {by using the methods in} \cite{julien-savinien-transversal}, we will construct 
a spectral triple  on  $\partial \BB_D$. This spectral triple gives rise to a Laplace--Beltrami operator $\Delta_s$ on $L^2(\partial \BB_D, \mu_D)$, where $\mu_D$ is the measure induced from the Dixmier trace of the spectral triple as in Theorem \ref{thm-Dixmier-trace-cuntz-algebra-O_D} below. We also  compute explicitly the orthogonal decomposition of $L^2(\partial \BB_D, \mu_D)$ in terms of the eigenfunctions of the Laplace--Beltrami operator $\Delta_s$ (cf.~\cite[Theorem~4.3]{julien-savinien-transversal}).

\subsection{The Cuntz algebra $\mathcal{O}_D$ and its Sierpinski spectral triple}

\label{subsec:cuntz-spectral-triple}

\begin{defn}
\label{def-weight-paths}
Let $\Lambda$ be a finite directed graph; let $F(\BB_\Lambda)_\circ$ be the set of all finite paths on the associated Bratteli diagram, including the empty path whose length we set to $-1$ by convention..  A \emph{weight} 
 on $\BB_\Lambda$ (equivalently, on $\Lambda$) is a function $w: F(\BB_\Lambda)_\circ \to (0,\infty)$ satisfying 
\begin{itemize}
\item[(a)] $w(\circ) = 1$

\item[(b)]
\[
\lim_{n\to \infty} \sup \{ w(\eta): \eta \in \Lambda^n =  F^n\BB_\Lambda\} = 0,
\]
where we denoted by $\Lambda^n = F^n\BB_\Lambda$ the set of  finite paths of length $n $ on $\Lambda$ (equivalently, $\BB_\Lambda$).
\item[(c)] For any finite paths $\eta, \nu$ 
with $s(\eta) = r(\nu)$, we have $w(\eta \nu) < w(\eta) $.
\end{itemize}
A Bratteli diagram $\BB_\Lambda$ with a weight $w$ is called a \emph{weighted Bratteli diagram}.
\end{defn}

\begin{rmk}
\label{rmk-weights-on-vertices-and-edges} 
Observe that a weight that satisfies Definition 2.9 of \cite{julien-savinien-transversal}  on the vertices of a Bratteli diagram $\BB_\Lambda$ induces a  weight on the finite paths of the Bratteli diagram as in Definition \ref{def-weight-paths} above.  
In fact in \cite{julien-savinien-transversal} and \cite{pearson-bellissard-ultrametric} the authors define a weight on $F\BB_\Lambda$ by defining the weight first on vertices, and then extending it to finite paths via the formula  $w(\eta) = w(s(\eta))$,  for $\eta\in F\BB_\Lambda$.  
\end{rmk}

 We will show below that a weight on  $\BB_\Lambda$  induces in turn  a 
 measure on the 
 infinite path space $\partial \BB_\Lambda \cong \Lambda^\infty$; see Theorem \ref{thm-Dixmier-trace-cuntz-algebra-O_D} below for details.

\begin{defn}
An \emph{ultrametric} $d$ on a topological space $X$ is a metric satisfying the strong triangle inequality:
\[
d(x,y)\le \max\{d(x,z), d(y,z)\} \quad\text{for all $x,y,z\in X$.}
\]

\end{defn}

\begin{prop}[{\cite[Proposition~2.10]{julien-savinien-transversal}}]
\label{prop-on-weights}
Let $\BB_\Lambda$ be a weighted Bratteli diagram with weight $w$.  We define a function $d_w$ on $\partial B_\Lambda\times \partial B_\Lambda$ by
\[
d_w(x,y)=\begin{cases} w(x\wedge y) & \text{if $x\ne y$} \\ 0 & \text{otherwise}\end{cases},
\]
where $x\wedge y$ is the longest common initial segment of $x$ and $y$. (If $r(x) \not= r(y)$ then we say $x \wedge y $ is the empty path $  \circ$, and $w(\circ) = 1$.)
Then $d_w$ is an ultrametric on $\partial B_\Lambda$.
\end{prop}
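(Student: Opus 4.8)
The plan is to check the three defining properties of an ultrametric in turn, with only the strong triangle inequality requiring real work. Positivity and definiteness are immediate: since a weight takes values in $(0,\infty)$ on \emph{all} of $F(\BB_\Lambda)_\circ$ (including $\circ$, with $w(\circ) = 1$), we get $d_w(x,y) = w(x\wedge y) > 0$ whenever $x \ne y$, while $d_w(x,x) = 0$ by definition, so $d_w(x,y) = 0$ if and only if $x = y$. Symmetry is equally immediate, since the longest common initial segment is symmetric in its arguments: $x \wedge y = y \wedge x$, hence $d_w(x,y) = d_w(y,x)$.

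For the strong triangle inequality I would first record two elementary facts. \emph{(Monotonicity of $w$ along initial segments.)} If $\mu$ is an initial segment of a finite path $\nu$ --- with the convention of Remark~\ref{rmk-root} that $\circ$ is an initial segment of every finite path --- then $w(\nu) \le w(\mu)$, with strict inequality when $\mu \ne \nu$; this follows from properties (a) and (c) of Definition~\ref{def-weight-paths} applied to the nonempty tail of $\nu$ after $\mu$, together with the conventions of Remark~\ref{rmk-root}. \emph{(Comparability of truncations.)} For any $x,y,z \in \partial\BB_\Lambda$, the finite paths $x \wedge z$ and $z \wedge y$ are both initial segments of $z$, and initial segments of a fixed (finite or infinite) path are totally ordered under the initial-segment relation; hence one of $x\wedge z$, $z\wedge y$ is an initial segment of the other. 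Moreover, when $u \ne v$, every common initial segment of $u$ and $v$ is an initial segment of the \emph{longest} such segment $u\wedge v$.

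The heart of the argument is the following claim: if $x,y,z$ are pairwise distinct and $\mu$ denotes whichever of $x\wedge z$, $z\wedge y$ is the shorter (these being comparable by the above; if equal, let $\mu$ be their common value), then $\mu$ is an initial segment of $x\wedge y$. I would prove this in two symmetric cases. If $\mu = x\wedge z$, then $\mu$ is an initial segment of $x$ and, being an initial segment of $z\wedge y$, also an initial segment of $y$; thus $\mu$ is a common initial segment of $x$ and $y$, hence an initial segment of $x\wedge y$. The case $\mu = z\wedge y$ is identical with the roles of $x$ and $y$ exchanged. Granting the claim, the strong triangle inequality follows: if $x = y$ it is trivial; if $z \in \{x,y\}$ then one of $d_w(x,z), d_w(z,y)$ equals $d_w(x,y)$ and we are done; otherwise $x,y,z$ are pairwise distinct, and
\[
 d_w(x,y) = w(x\wedge y) \le w(\mu) \le \max\{w(x\wedge z),\, w(z\wedge y)\} = \max\{d_w(x,z),\, d_w(z,y)\},
\]
where the first inequality is monotonicity (since $\mu$ is an initial segment of $x\wedge y$) and the second holds because $\mu \in \{x\wedge z, z\wedge y\}$.

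I expect the only delicate point to be the bookkeeping around the empty path $\circ$: one must make sure the comparability statement, the monotonicity statement, and the claim all remain correct when one or more of $x\wedge z$, $z\wedge y$, $x\wedge y$ degenerates to $\circ$ --- for instance when the three points do not all pass through the same initial vertex. This is exactly the degeneracy isolated in Remark~\ref{rmk-root}, and it is bookkeeping rather than a genuine obstacle: in each such case $\mu = \circ$ is trivially an initial segment of $x\wedge y$, and $w(x\wedge y) \le w(\circ) = 1$ by monotonicity, so the displayed chain of inequalities still goes through.
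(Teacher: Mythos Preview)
Your proof is correct. The paper itself does not prove this proposition at all --- it simply cites it as \cite[Proposition~2.10]{julien-savinien-transversal} and moves on --- so there is no ``paper's own proof'' to compare against. Your argument is the standard direct verification: positivity and symmetry are immediate, and the strong triangle inequality follows from the observation that $x\wedge z$ and $z\wedge y$, being initial segments of the same infinite path $z$, are comparable, so the shorter one is a common initial segment of $x$ and $y$ and hence an initial segment of $x\wedge y$; monotonicity of the weight along initial segments then gives the inequality. The treatment of the degenerate $\circ$ case is handled correctly.
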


Note that the ultrametric $d_w$ induces the same topology on $\partial \BB_\Lambda$ as the cylinder sets in \eqref{eq:cylin}; thus,  $(\partial \BB_\Lambda, d_w)$ is called an \emph{ultrametric Cantor set}.

\begin{defn}
\label{def-choice-of-weight-Cuntz-algebra}
Let $A_D$ be a $D\times D$ matrix with 1 in every entry and let $\BB_D$ be the associated Bratteli diagram. Fix $\lambda >1$, and set
\[
d = \ln D/\ln \lambda.
\] 
We define a weight $w_D^{\lambda}$ on the Bratteli diagram $\mathcal{B}_D$  by setting 
\begin{itemize}
\item[(a)] 
$
w_D^{\lambda}(\circ) = 1.
$
\item[(b)] For any level $0$ vertex $v \in V_0$ of $\BB_D$, $
w_D^{\lambda}(v) = \frac{1}{D}.
$
\item[(c)] For any finite path $\gamma \in F^n\mathcal{B}_D$ of length $n$,  
\[
w_D(\gamma) =\lambda^{-n} \frac1D.
\]
\end{itemize} \end{defn} 

According to \cite{julien-savinien-transversal}, after choosing a weight on $\BB_D$,  we can build a  spectral triple associated to it  as in the following Theorem. Note that this result  
is a special case of Section 3 of \cite{julien-savinien-transversal}. 

\begin{thm}\label{prop:spectral}
Fix an integer $D > 1$ and $\lambda >1 $.   Let $(\BB_D, w_D^{\lambda})$ be the weighted Bratteli diagram  with the choice of weight $w_D^{\lambda}$  as in Definition \ref{def-choice-of-weight-Cuntz-algebra}. Let $(\partial \BB_D, d_w^\lambda)$ be  the associated ultrametric Cantor set. Then there is an even spectral triple $(C_{\text{Lip}}(\partial \BB_D), \mathcal{H}, \pi_\tau, \slashed{D}, \Gamma)$,
where
\begin{itemize} 
\item $C_{\text{Lip}}(\partial \BB_D)$ is  the pre-$C^*$-algebra of Lipschitz continuous functions on $(\partial \BB_D, d_w^\lambda)$,
\item for each choice function $\tau: F\BB_D \to \partial \BB_D\times \partial \BB_D$,\footnote{A choice function $\tau:F\BB_D\to \partial \BB_D\times \partial \BB_D$ is a function that satisfies
\[
\tau(\gamma)= : (\tau_{+}(\gamma), \tau_{-}(\gamma))\quad\text{where}\quad d_w(\tau_{+}(\gamma), \tau_{-}(\gamma))=w_D^\lambda(\gamma).
\]
} a faithful representation $\pi_\tau$ of $C_{\text{Lip}}(\partial \BB_D)$ is given by bounded operators on the Hilbert space $\mathcal{H}=\ell^2(F\BB_D)\otimes \C^2$ as
\[
\pi_\tau(f)=\bigoplus_{\gamma\in F(\BB_D)_\circ}\begin{pmatrix} f(\tau_{+}(\gamma)) & 0 \\ 0 & f(\tau_{-}(\gamma))\end{pmatrix};
\]
\item the Dirac operator $\slashed{D}$ on $\mathcal{H}$ is given by
\[
\slashed{D}=\bigoplus_{\gamma\in F(\BB_D)_\circ} \frac{1}{w_D^\lambda(\gamma)}\begin{pmatrix} 0 & 1\\ 1 & 0\end{pmatrix};
\]
\item the grading operator is given by $\Gamma=1_{\ell^2(F(\BB_D)_\circ)}\otimes \begin{pmatrix} 1 & 0 \\ 0 & -1\end{pmatrix}$.
\end{itemize}
\end{thm}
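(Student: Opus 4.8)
The plan is to obtain the theorem as a specialization of the general construction of Julien and Savinien (Section~3 of \cite{julien-savinien-transversal}): I would check that the particular weighted Bratteli diagram $(\BB_D, w_D^\lambda)$ of Definition~\ref{def-choice-of-weight-Cuntz-algebra} meets the hypotheses of that construction, translate their conventions into ours, and then verify the four defining properties of an even spectral triple for the resulting data $(C_{\text{Lip}}(\partial\BB_D), \mathcal{H}, \pi_\tau, \slashed{D}, \Gamma)$: namely that (i) $\pi_\tau$ is a faithful $*$-representation of a pre-$C^*$-algebra by bounded operators, (ii) $\slashed{D}$ is self-adjoint with compact resolvent, (iii) $[\slashed{D}, \pi_\tau(f)]$ is bounded for every $f \in C_{\text{Lip}}(\partial\BB_D)$, and (iv) $\Gamma$ is a self-adjoint unitary commuting with each $\pi_\tau(f)$ and anticommuting with $\slashed{D}$.

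First I would check that $w_D^\lambda$ is a weight in the sense of Definition~\ref{def-weight-paths}: (a) is built into the definition; (b) holds since $\sup\{w_D^\lambda(\eta) : \eta \in F^n\BB_D\} = \lambda^{-n}/D \to 0$ because $\lambda > 1$; and (c) holds since appending a nonempty path $\nu$ multiplies the weight by $\lambda^{-|\nu|} < 1$. Proposition~\ref{prop-on-weights} then makes $d_{w_D^\lambda}$ an ultrametric inducing the cylinder topology, and since every row sum of $A_D$ equals $D \ge 2$, Corollary~\ref{cor:suff-cond-cantor} shows $\partial\BB_D$ is a Cantor set; so $(\partial\BB_D, d_{w_D^\lambda})$ is an ultrametric Cantor set, which is the input required by \cite{julien-savinien-transversal}. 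At this stage I would fix the dictionary flagged in Remark~\ref{rmk-root} between our Bratteli diagrams and those of \cite{julien-savinien-transversal} --- a finite path ending in $V_n$ has length $n+1$ there and $n$ here, with their ``root only'' path corresponding to our empty path $\circ$ of length $-1$ --- and check that their weight axioms transport to ours under this uniform shift of length by one. Keeping this shift consistent throughout, in particular in the indexing of the direct sums over $F(\BB_D)_\circ$ that define $\pi_\tau$, $\slashed{D}$ and $\Gamma$, is the main bookkeeping obstacle; everything else reduces to short computations.

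For the analytic core one can either quote \cite{julien-savinien-transversal} or check directly, and the direct check is short. The operator $\slashed{D} = \bigoplus_{\gamma \in F(\BB_D)_\circ} \tfrac{1}{w_D^\lambda(\gamma)} \bigl(\begin{smallmatrix} 0 & 1 \\ 1 & 0 \end{smallmatrix}\bigr)$ is a diagonal direct sum of self-adjoint $2\times 2$ blocks, hence self-adjoint on its natural domain; its eigenvalues are $\pm 1$ (from $\gamma = \circ$) and $\pm D\lambda^n$ for $n \ge 0$, the latter with multiplicity $\#F^n\BB_D = D^{n+1} < \infty$, so since $D\lambda^n \to \infty$ the resolvent $(\slashed{D} \pm i)^{-1}$ is a norm-limit of finite-rank operators and thus compact. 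For $f \in C_{\text{Lip}}(\partial\BB_D)$ the commutator $[\slashed{D}, \pi_\tau(f)]$ is again block-diagonal, with $\gamma$-block equal to $\tfrac{f(\tau_-(\gamma)) - f(\tau_+(\gamma))}{w_D^\lambda(\gamma)} \bigl(\begin{smallmatrix} 0 & 1 \\ -1 & 0 \end{smallmatrix}\bigr)$; since a choice function satisfies $d_{w_D^\lambda}(\tau_+(\gamma), \tau_-(\gamma)) = w_D^\lambda(\gamma)$, the norm of this block is at most $\mathrm{Lip}(f)$, whence $\|[\slashed{D}, \pi_\tau(f)]\| \le \mathrm{Lip}(f) < \infty$. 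The force of the theorem lies in these two facts, but the estimates themselves are immediate from the definitions.

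Finally, $\pi_\tau$ is evidently a $*$-homomorphism with $\|\pi_\tau(f)\| = \sup_\gamma \max\{|f(\tau_+(\gamma))|, |f(\tau_-(\gamma))|\} \le \|f\|_\infty$; because (in the construction of \cite{julien-savinien-transversal}) $\tau_+(\gamma)$ is an infinite path extending $\gamma$ and hence lies in the cylinder $[\gamma]$, the set $\{\tau_+(\gamma)\}_{\gamma \in F\BB_D}$ meets every nonempty basic open set, so it is dense in $\partial\BB_D$ and therefore $\|\pi_\tau(f)\| = \|f\|_\infty$; in particular $\pi_\tau$ is isometric, hence faithful. That $C_{\text{Lip}}(\partial\BB_D)$ is a dense $*$-subalgebra of $C(\partial\BB_D)$ stable under holomorphic functional calculus (hence a genuine pre-$C^*$-algebra) is standard, using that $1/f$ is Lipschitz whenever $f$ is Lipschitz and bounded away from $0$ on the compact space $\partial\BB_D$. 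For (iv), the identities $\Gamma^* = \Gamma$, $\Gamma^2 = 1$, $\Gamma\,\pi_\tau(f) = \pi_\tau(f)\,\Gamma$ and $\Gamma\,\slashed{D} = -\slashed{D}\,\Gamma$ follow blockwise from the fact that $\bigl(\begin{smallmatrix} 1 & 0 \\ 0 & -1 \end{smallmatrix}\bigr)$ commutes with diagonal $2\times 2$ matrices and anticommutes with $\bigl(\begin{smallmatrix} 0 & 1 \\ 1 & 0 \end{smallmatrix}\bigr)$. Assembling (i)--(iv) yields the even spectral triple, in agreement with Section~3 of \cite{julien-savinien-transversal}.
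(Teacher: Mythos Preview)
Your proposal is correct and follows the same approach as the paper: the paper does not give a proof at all but simply states, just before the theorem, that ``this result is a special case of Section~3 of \cite{julien-savinien-transversal}.'' Your write-up is therefore more detailed than the paper's treatment, supplying the verification of the weight axioms, the compact-resolvent and bounded-commutator computations, and the faithfulness argument that the paper leaves entirely to the citation.
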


\begin{defn}[{cf.~\cite[Theorem~3.8]{julien-savinien-transversal}}] The $\zeta$-function associated to the spectral triple of Theorem~\ref{prop:spectral} 
 is given by
\begin{equation}
\label{eq-def-zeta}
\zeta_D^\lambda(s) : ={\frac{1}{2}\Tr(|{\slashed{D}}|^{-s})}=
\sum_{\gamma\in F(\BB_D)_\circ}\big(w_D^\lambda(\gamma)\big)^s.
\end{equation}
\end{defn}

\begin{prop}[{cf.~\cite[Theorem~3.8]{julien-savinien-transversal}}]
\label{ab:conv-even-weight} 
The $\zeta$-function in Equation \eqref{eq-def-zeta} has abscissa of convergence equal to $d = \ln D / \ln \lambda $.
\end{prop}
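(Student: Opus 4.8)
## Proof Proposal

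The plan is to compute the $\zeta$-function explicitly by organizing the sum in Equation~\eqref{eq-def-zeta} according to the length of the finite path $\gamma$, and then to recognize the resulting series as a geometric series whose abscissa of convergence is immediate. The key observation is that the weight $w_D^\lambda(\gamma)$ depends only on the length $|\gamma|$, not on which particular path $\gamma$ is: by Definition~\ref{def-choice-of-weight-Cuntz-algebra}, $w_D^\lambda(\gamma) = \lambda^{-n}\tfrac1D$ for every $\gamma \in F^n\BB_D$ with $n \geq 0$, and $w_D^\lambda(\circ) = 1$ for the empty path of length $-1$.

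First I would count the finite paths of each length. Since the vertex matrix $A_D$ has all entries equal to $1$, there are $D$ vertices at level $0$ (hence $|F^0\BB_D| = D$), and from any vertex there are exactly $D$ edges to the next level; inductively $|F^n\BB_D| = D^{n+1}$ for all $n \geq 0$. Including the single empty path, the $\zeta$-function becomes
\[
\zeta_D^\lambda(s) = 1 + \sum_{n=0}^\infty D^{n+1}\left(\lambda^{-n}\tfrac1D\right)^s = 1 + \sum_{n=0}^\infty D^{n+1-s}\,\lambda^{-ns} = 1 + D^{1-s}\sum_{n=0}^\infty \left(D\,\lambda^{-s}\right)^n.
\]
The geometric series $\sum_{n\geq 0}(D\lambda^{-s})^n$ converges precisely when $D\lambda^{-s} < 1$, i.e.\ when $\lambda^s > D$, i.e.\ (taking logarithms, using $\lambda > 1$) when $s > \ln D/\ln\lambda = d$. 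For $s \leq d$ the terms do not go to zero, so the series diverges. This shows the abscissa of convergence is exactly $d$, and incidentally gives the closed form $\zeta_D^\lambda(s) = 1 + \dfrac{D^{1-s}}{1 - D\lambda^{-s}}$ for $\mathrm{Re}(s) > d$.

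I do not anticipate a serious obstacle here; the only point requiring a little care is the bookkeeping around the empty path of length $-1$ and the level-$0$ vertices (the reindexing that makes $w_D^\lambda$ depend only on length), and confirming that the path-counting $|F^n\BB_D| = D^{n+1}$ is correct given the conventions of Definition~\ref{def:bratteli-diagram} and Remark~\ref{rmk-root}. Since the single extra term $1$ coming from the empty path does not affect convergence, the conclusion is robust. This argument is exactly the specialization of \cite[Theorem~3.8]{julien-savinien-transversal} to the uniform-weight Bratteli diagram $\BB_D$, where the abstract Dirichlet-series computation collapses to an elementary geometric series.
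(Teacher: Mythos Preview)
Your proof is correct and follows essentially the same approach as the paper: group the sum by path length, use $|F^n\BB_D| = D^{n+1}$, and recognize the resulting geometric series with ratio $D\lambda^{-s}$. If anything, your bookkeeping around the empty path is slightly cleaner---the paper folds the $q=-1$ term into the same formula $D^{-s}D^{q+1}\lambda^{-qs}$, which does not quite reproduce $(w_D^\lambda(\circ))^s = 1$, whereas you separate it out explicitly; of course this single finite term is irrelevant for the abscissa of convergence, so both arguments reach the same conclusion.
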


\begin{proof}
By a straightforward calculation we get  (if we denote by $F^q(\BB_D)_\circ$ the set of paths of length $q$):
 \[
  \sum_{\gamma \in F(\BB_D)_\circ} \Big(w_D^\lambda(\gamma)\Big)^s 
      = D^{-s} \sum_{q \geq -1} \Card (F^q(\BB_D)_\circ) \lambda^{-qs}
      = D^{-s} \sum_{q \geq -1} D^{q+1} \lambda^{-qs},
 \]
 where $\Card(S)$ denotes the cardinality of the set $S$.
 It is clear that this sum converges  precisely when $D/\lambda^s$ is smaller than~$1$, that is whenever 
 \[
  s > \frac{\ln D}{\ln \lambda}.
 \]
 \end{proof}

It is known that the abscissa of convergence coincides with the upper Minkowski dimension of $\partial \BB_D \cong \Lambda^\infty_D$ associated to the ultrametric $d_w^\lambda$~\cite[Theorem~2]{pearson-bellissard-ultrametric}. In the self-similar cases (when the weight is given as in Definition~\ref{def-choice-of-weight-Cuntz-algebra}), the upper Minkowski dimension turns out to coincide with the Hausdorff dimension~\cite[Theorem~2.12]{julien-savinien-embedding}.
In particular, when the scaling factor $\lambda$ is just $N$, the Hausdorff dimensions of $(\Lambda^\infty_D, d_{w_D^N})$ and $\mathbb{S}_A$ coincide, where we equip $\mathbb{S}_A$ with the metric induced by the Euclidean metric on $[0,1]^2$.

The Dixmier trace $\mu_D^\lambda(f)$ of a function $f\in C_{\text{Lip}}(\partial \BB_D)$ is given by the expression below; see Theorem 3.9 of \cite{julien-savinien-transversal} for details. 
\begin{equation}
\label{eq:zeta-mu}
\mu_D^\lambda(f)=\lim_{s\downarrow d}\frac{\Tr(|{\slashed{D}}|^{-s}\pi_\tau(f))}{\Tr(|{\slashed{D}}|^{-s})}=\lim_{s\downarrow d}\frac{\Tr(|{\slashed{D}}|^{-s}\pi_\tau(f))}{ 2 \zeta_D^\lambda(s)  } .
\end{equation}

In particular the  limit given in \eqref{eq:zeta-mu} induces a measure $\mu_D^\lambda$ on $\partial \BB_D$ characterized as follows.  If $f=\chi_{[\gamma]}$ is the characteristic function of a cylinder set $[\gamma]$, and if $F_\gamma\BB_D = \{ \eta \in F_\gamma\BB_D: \eta = \gamma \eta'\}$ denotes the set of all finite paths with initial segment $\gamma$, we have

\begin{equation}\label{eq:m_ind_Dix}
\mu_D^\lambda([\gamma]) = \mu_D^\lambda(\chi_{[\gamma]})
                =\lim_{s\downarrow d} \frac{\sum_{\eta\in F_\gamma(\BB_D)_\circ}\big(w_D^\lambda(\eta)\big)^s}{\sum_{\eta\in F(\BB_D)_\circ}\big(w_D^\lambda(\eta)\big)^s}.
\end{equation}

It actually turns out, as we prove in Theorem \ref{thm-Dixmier-trace-cuntz-algebra-O_D} below, that the measure $\mu_D^\lambda$ on $\partial \BB_D$ 
is independent of $\lambda$; so we will also write, with notation as above
\[
\mu_D([\gamma])=\mu_D(\chi_{[\gamma]}) = \mu_D^\lambda([\gamma])=\mu_D^\lambda(\chi_{[\gamma]})
\]

Moreover,
by combining Theorem \ref{thm:measure_preserving} with    Theorem \ref{thm-Dixmier-trace-cuntz-algebra-O_D} below, 
we see that $\mu_D$ agrees with the Hausdorff measure of $\mathbb{S}_A$.} 

\begin{thm}
\label{thm-Dixmier-trace-cuntz-algebra-O_D}
For any choice of scaling factor $\lambda >1$, the measure $\mu_D^\lambda$ on $\partial \BB_D$ {induced by the Dixmier trace} 
agrees with the measure $M$ associated to the infinite path representation of $\mathcal{O}_D$. 
Namely, for  any finite path  $\gamma \in F\BB_D$, we have
\begin{equation} 
\mu_D([\gamma])=\frac{1}{D^{|\gamma|+1 }} = 
M([\gamma]).
 \end{equation}
 \end{thm}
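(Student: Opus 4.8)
The plan is to evaluate the limit in Equation~\eqref{eq:m_ind_Dix} directly: compute the numerator $\sum_{\eta \in F_\gamma(\BB_D)_\circ}\big(w_D^\lambda(\eta)\big)^s$ and the denominator $\zeta_D^\lambda(s)=\sum_{\eta \in F(\BB_D)_\circ}\big(w_D^\lambda(\eta)\big)^s$ as explicit elementary functions of $s$ on the half-line $s>d$, then let $s\downarrow d$ and compare the result with the formula for $M$ in Equation~\eqref{eq:measure}.

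First I would record the denominator, reusing the computation in the proof of Proposition~\ref{ab:conv-even-weight}: since $\Card\big(F^q(\BB_D)_\circ\big)=D^{q+1}$ and $w_D^\lambda$ takes the constant value $\lambda^{-q}/D$ on paths of length $q\ge 0$, one has $\zeta_D^\lambda(s)=D^{-s}\sum_{q\ge -1}D^{q+1}\lambda^{-qs}$, and summing the geometric series — convergent exactly when $D\lambda^{-s}<1$, i.e. when $s>d=\ln D/\ln\lambda$ — gives $\zeta_D^\lambda(s)=D^{-s}\lambda^{s}+D^{1-s}\big(1-D\lambda^{-s}\big)^{-1}$. Next, for the numerator: since $\Lambda_D$ has the all-ones vertex matrix, a fixed finite path $\gamma$ with $|\gamma|=n$ has exactly $D^{k}$ extensions of length $n+k$ in $\BB_D$, each of weight $\lambda^{-(n+k)}/D$, so summing the resulting geometric series yields $\sum_{\eta\in F_\gamma(\BB_D)_\circ}\big(w_D^\lambda(\eta)\big)^s=D^{-s}\lambda^{-ns}\big(1-D\lambda^{-s}\big)^{-1}$.

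Then I would form the ratio. Both the numerator and the denominator diverge as $s\downarrow d$, because the common factor $\big(1-D\lambda^{-s}\big)^{-1}$ blows up; the key algebraic step is to clear this factor before passing to the limit. Multiplying numerator and denominator by $D^{s}\big(1-D\lambda^{-s}\big)$ and using $\lambda^{s}\big(1-D\lambda^{-s}\big)=\lambda^{s}-D$ collapses the ratio to $\lambda^{-ns}/\lambda^{s}=\lambda^{-(n+1)s}$, so $\mu_D^\lambda([\gamma])=\lim_{s\downarrow d}\lambda^{-(n+1)s}=\lambda^{-(n+1)d}$. Since $\lambda^{d}=D$ by the definition of $d$, this equals $D^{-(n+1)}=D^{-|\gamma|-1}$, which is manifestly independent of $\lambda$; as the cylinder sets generate the Borel $\sigma$-algebra on $\partial\BB_D$, this justifies writing $\mu_D$ for the common value. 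Finally, comparing with Equation~\eqref{eq:measure}, where $\rho(A_D)=D$ and the unimodular Perron--Frobenius eigenvector of $A_D$ is $(1/D,\dots,1/D)$, so $M([\gamma])=\rho(A_D)^{-|\gamma|}P_{s(\gamma)}=D^{-|\gamma|}\cdot D^{-1}$, gives $\mu_D([\gamma])=M([\gamma])$, as claimed.

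The computation is routine, so the only points requiring care are bookkeeping ones: keeping track of the length-$(-1)$ empty path in the sums, and remembering that a finite path of length $n$ in our Bratteli diagram is a string of $n$ edges but $n+1$ vertices — this is exactly what makes the exponent come out as $|\gamma|+1$ rather than $|\gamma|$, matching the ``$+1$'' in $M([\gamma])=D^{-|\gamma|-1}$. The other mild subtlety is the limit itself: since both sums in~\eqref{eq:m_ind_Dix} diverge at $s=d$, one must simplify the quotient (factoring out $(1-D\lambda^{-s})^{-1}$) before taking $s\downarrow d$, rather than attempting a term-by-term limit.
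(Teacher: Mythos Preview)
Your approach is exactly the paper's: compute numerator and denominator of~\eqref{eq:m_ind_Dix} as geometric series, clear the common diverging factor $(1-D\lambda^{-s})^{-1}$, and let $s\downarrow d$. The conclusion is correct.

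There is, however, a small bookkeeping slip in precisely the place you flagged. The formula $w_D^\lambda(\gamma)=D^{-1}\lambda^{-|\gamma|}$ from Definition~\ref{def-choice-of-weight-Cuntz-algebra} holds only for $|\gamma|\ge 0$; for the empty path one has $w_D^\lambda(\circ)=1$ by fiat, not $\lambda/D$. So the $q=-1$ term of $\zeta_D^\lambda(s)$ contributes $1$, not $D^{-s}\lambda^{s}$, and the correct denominator is
\[
\zeta_D^\lambda(s)=1+D^{1-s}\bigl(1-D\lambda^{-s}\bigr)^{-1}.
\]
With this correction the ratio does \emph{not} collapse exactly to $\lambda^{-(n+1)s}$ (unless $\lambda=D$): after clearing $D^{s}(1-D\lambda^{-s})$ you get
\[
\frac{\lambda^{-ns}}{D^{s}\bigl(1-D\lambda^{-s}\bigr)+D}
\]
rather than $\lambda^{-ns}/\lambda^{s}$. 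This does not harm the limit, since $1-D\lambda^{-s}\to 0$ as $s\downarrow d$ and the denominator tends to $D=\lambda^{d}$, so you still obtain $D^{-(n+1)}$. The paper's proof handles this by explicitly isolating the ``$1$'' from the empty path in the denominator before summing the geometric series; your version would be fixed by the same one-line adjustment. (The display in the proof of Proposition~\ref{ab:conv-even-weight} that you are reusing has the same slip; it is harmless for the abscissa of convergence but matters for the exact algebra you want here.)
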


 \begin{proof}  Note that, although  the proof of this  Theorem  is very long for the more general case of Cuntz--Krieger algebras (cf.~\cite[Theorem 3.9]{julien-savinien-transversal}), 
 it considerably simplifies for the case of  Cuntz algebras covered here.
  First note that for the choice of the empty path $\gamma= \circ  $ (whose cylinder set corresponds to the whole space), we have
\begin{equation}\label{limit-eq-cuntz-case-circ}
  f(s) = \frac{\sum_{\eta \in F(\BB_D)_\circ}  (w_D^\lambda(\eta))^s} {\sum_{\eta \in F(\BB_D)_\circ}  (w_D^\lambda(\eta))^s } =1=\mu_D^\lambda(\Lambda^{\infty}_D) =M(\Lambda^{\infty}_D ) . 
\end{equation}
Now we will compute $\mu_D^\lambda$, for  a finite path $\gamma\not= \circ $ of length $n$ in $F^n\BB_D$. 
Define, according to  Equation \eqref{eq:m_ind_Dix},
\begin{equation}\label{limit-eq-cuntz-case}
  f(s) = \frac{\sum_{\eta \in F_\gamma \BB_D}  (w_D^\lambda(\eta))^s} {1+ \sum_{\eta \in F\BB_D}  (w_D^\lambda(\eta))^s }. 
\end{equation}

Note that in the above expression we isolated the term corresponding to the empty path, for which $\Big( w^\lambda_D(\circ)\Big)^s = 1^s =1$. Moreover, since  $\gamma$ is not the empty path, then  $\eta = \circ$ does not occur in the sum in the numerator.
 If $\eta\in F_\gamma\BB_D$, then $w_D^\lambda(\eta)^s$ only depends on the length of $\eta$, say $|\eta| = n+q$ for some $q\in \N_0$, and hence $w_D^\lambda(\eta)=D^{-1} \lambda^{-(n+q)}$. 
 For $q\in \N_0$, let
 \[\begin{split}
    F^q\BB_D        & = \{ \eta \in F\BB_D        : \vert \eta \vert = q   \}, \\
    F_\gamma^q\BB_D & = \{ \eta \in F_\gamma\BB_D : \vert \eta \vert = n+q \}.
 \end{split}\]
 Then we can write 
  \[
    f(s) = \frac{D^{-s} \sum_{q\in \N_0} \Card(F_\gamma^q\BB_D) \big( \lambda^{-(n+q)} \big)^s}{
                 1 + D^{-s}\sum_{q\in \N_0} \Card(F_\gamma^q\BB_D)        \big( \lambda^{-q}   \big)^s}.
  \]
 Since the vertex matrix $A_D$ of the Bratteli diagram $\BB_D$ has 1 in every entry, every edge in $\BB_D$ has $D$ possible edges that could follow it. Also note that $\eta\in F^q\BB_D$ has its range in $V_0$ and its source in $V_q$, and hence we get 
 \[
   \Card(F^q\BB_D)=D^{q+1}.
 \]
 But  any finite path $\eta\in F^q_\gamma\BB_D$ can be written as $\eta=\gamma\eta'$. Since  $\gamma$ is fixed,  the number of  paths $\eta \in F^q_\gamma\BB_D$ is the same as the number of possible paths $\eta'$. Since $r(\eta')=s(\gamma)$ is also fixed, we get
 \[
   \Card(F^q_\gamma\BB_D)= D^q.
 \]
 
By multiplying both numerator and denominator of $f(s)$ by $D^s$, we obtain
  \begin{align*}
  f(s)& =\frac{D^{-s} \sum_{q\in \N_0} D^{q} \big({\lambda^{-(n+q)}}\big)^s}{ 1+ D^{-s}\sum_{q\in \N_0} D^{q+1} \big({\lambda^{-q}}\big)^s} = \frac{1}{\lambda^{ns}}\frac{\sum_{q\in \N_0}D^q \lambda^{-qs}}{D^s+\sum_{q\in \N_0} D^{q+1}\lambda^{-qs}} \\
 & =\frac{1}{\lambda^{ns}}\frac{\sum_{q\in \N_0}\Big(\frac{D}{\lambda^s}\Big)^q}{\left(D^s+D\sum_{q\in \N_0}\Big(\frac{D}{\lambda^s}\Big)^q\right)}.
  \end{align*}
  Since $s>\frac{\ln D}{\ln \lambda}$, we have $\frac{D}{\lambda^s}<1$, thus $\sum_{q\in \N_0}\Big(\frac{D}{\lambda^s}\Big)^q$ converges and is equal to $\frac{1}{1-\frac{D}{\lambda^s}}$. Thus (again multiplying numerator and denominator of $f(s)$ by $1-\frac{D}{\lambda^s}$),
 \[
 f(s)=\frac{1}{\lambda^{ns}}\frac{\frac{1}{1-\frac{D}{\lambda^s}}}{(D^s+D\frac{1}{1-\frac{D}{\lambda^s}})}
 =\frac{1}{\lambda^{ns}}\frac{1}{\Big((1-\frac{D}{\lambda^s})D^s+D\Big)}
 \] 
  Now take the limit $s\downarrow d$ and recall that $\lambda^d=D$. So we have $(1-\frac{D}{\lambda^s})\to 0$ and hence
 \[
 \lim_{s\downarrow d}f(s)=\frac{1}{\lambda^{nd}}\frac{1}{D}=\frac{1}{D^n}\frac{1}{D}=\frac{1}{D^{n+1}},
 \]
 which is the desired result by Equation \eqref{eq:m_ind_Dix}.
 \end{proof}

 \subsection{The Laplace--Beltrami operator}
 \label{subsec-Laplace-Beltrami Operator-O-D}
 
 In Section 4 of \cite{julien-savinien-transversal}, the authors use the spectral triple associated to a weighted Bratteli diagram to construct a non-positive definite self-adjoint operator with discrete spectrum (which they fully describe) defined on the infinite path space of the given Bratteli diagram. Moreover, they show in Theorem 4.3 of  \cite{julien-savinien-transversal} that the eigenfunctions of $\Delta_s$ form an orthogonal decomposition of the $L^2$-space of the boundary.
 
 Therefore, by  applying the results of Section 4 of \cite{julien-savinien-transversal} to the spectral triples of  Section \ref{subsec:cuntz-spectral-triple} above, we obtain, after we choose a weight $w_D^{\lambda}$ on $\BB_D$ as in Definition \ref{def-choice-of-weight-Cuntz-algebra},  a non-positive definite self-adjoint operator $\Delta_s$ on $L^2(\partial \BB_D, \mu_D)$ for any $s \in \R$,  where $\mu_D$ is the measure on $\partial \BB_D$ given in \eqref{eq:m_ind_Dix}. (Recall that $\mu_D$ does not depend on $\lambda$). 
 Namely, for any $s\in \R$, the Laplace--Beltrami operator $\Delta_s$ on $L^2(\partial \BB_D, \mu_D)$
  is given by
  \begin{equation}\label{eq:Delta}
  \langle f, \Delta_s(g)\rangle=Q_s(f,g)=\frac{1}{2}\int_E \Tr(\vert \slashed{D}\,\vert^{-s}[\slashed{D}, \pi_\tau(f)]^*\,[{\slashed{D}},\pi_\tau(g)]\, d\mu_D(\tau),
  \end{equation}
  where $\Dom Q_s= \Span  \{ \chi_{[\gamma]} : \gamma\in F\BB_D \}$ and $Q_s$ is a closable Dirichlet form, and $\mu_D(\tau)$ is the measure induced by the Dixmier trace on the set $E$ of choice functions.

 Moreover, 
  the eigenfunctions of $\Delta_s$ form an orthogonal decomposition of $L^2(\partial \BB_D, \mu_D)$. In the remainder of this section we give the details of this decomposition and formulas for the  eigenvalues.   In Section \ref{wavelets-and-eigenfunctions-O-D} below, we describe the relationship between this orthogonal decomposition and the wavelet decomposition of $L^2(\Lambda^\infty, M)$ computed in \cite{FGKP}.

   \begin{thm}\cite[Theorem~4.3]{julien-savinien-transversal}\label{thm:eigen}
  Let $(\mathcal{B}_D, w_D^{D})$ be the weighted  Bratteli diagram as in Theorem \ref{prop:spectral}. (Note that we made here the choice  $\lambda=D$  for simplicity.) 
 Let $\Delta_s$ be the Laplace--Beltrami operator on $L^2(\partial \BB_D, \mu_D)$ given by \eqref{eq:Delta}.
 Then  the eigenvalues of $\Delta_s$ are $0$, associated to the constant function $1$, and the eigenvalues $\{\lambda_\eta\}_{\eta \in (F\BB_D)_\circ}$ with corresponding eigenspaces $\{E_\eta\}_{\eta \in (F\BB_D)_\circ}$ of $\Delta_s$ are given by
 \[
  \lambda_\circ = \bigl( G_s (\circ) \bigr)^{-1} = \frac{2D}{D-1};
 \]
 \[
  \lambda_\eta = -2 -2D^{3-s} \frac{1-D^{(3-s)|\eta|}}{1-D^{3-s}} - \frac{2D^{3 |\eta| +4}}{(D-1)D^{s(|\eta| +1)}}, \quad \eta \in F(\BB_D)
 \]
 with
 \[
  E_\circ = \Span \Bigl \{ D^{-1} \bigl( \chi_v   - \chi_{v'} \bigr)   \Bigr\} \ : \ v \ne v' \in V_0 \Bigr\},
 \]
 \begin{multline*}
  E_\eta= \Span \Big\{\, D^{|\eta|+2} \left( {\chi_{[\eta e]}}-{\chi_{[\eta e']}} \right)\; : \\ \eta \in F(\BB_D),\  e \ne e',\; |e|=|e'|=1,\; r(e)=r(e') = s(\eta)\, \Big\}.
 \end{multline*}
\end{thm}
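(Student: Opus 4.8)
The theorem is the specialization of \cite[Theorem~4.3]{julien-savinien-transversal} to the weighted Bratteli diagram $(\BB_D, w_D^D)$ with scaling factor $\lambda = D$, so the plan is to transcribe the general Julien--Savinien result into the conventions of Section~\ref{sec:spect-triples} (cf.\ Remark~\ref{rmk-root}) and then evaluate the closed forms it produces. The first step would be to record the shape of the Dirichlet form $Q_s$ of~\eqref{eq:Delta} on the dense domain $\Span\{\chi_{[\gamma]} : \gamma \in F\BB_D\}$: computing $[\slashed{D}, \pi_\tau(f)]$ summand by summand shows that its $\gamma$-component is a $2 \times 2$ matrix proportional to $w_D^D(\gamma)^{-1}\bigl(f(\tau_+(\gamma)) - f(\tau_-(\gamma))\bigr)$, so that after pairing with $|\slashed{D}|^{-s}$, taking the trace, and integrating against the choice-function measure $\mu_D(\tau)$ on $E$, one obtains a weighted sum over all finite paths $\gamma \in F(\BB_D)_\circ$ (empty path included) of the mean-square dispersion of $f$ among the $D$ children of $\gamma$. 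Given this, I would invoke \cite[Theorem~4.3]{julien-savinien-transversal} for the essential structural fact: the constant function together with the functions $f_\eta := \chi_{[\eta e]} - \chi_{[\eta e']}$, for $\eta$ a finite (possibly empty) path and $e \neq e'$ edges with $r(e) = r(e') = s(\eta)$, form a complete orthogonal family of eigenvectors of $\Delta_s$ whose eigenvalue depends only on~$\eta$.

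What remains is to check the hypotheses and to compute the eigenvalues. Since the vertex matrix $A_D$ is the all-ones matrix and $D \ge 2$, every vertex of $\BB_D$ has exactly $D$ children, so no eigenspace degenerates and $\dim E_\eta = D - 1$; the counts $\Card(F^q\BB_D) = D^{q+1}$ and $\Card(F^q_\gamma\BB_D) = D^q$ from the proof of Proposition~\ref{ab:conv-even-weight} also apply. The key simplification is furnished by Theorem~\ref{thm-Dixmier-trace-cuntz-algebra-O_D}: the Dixmier--trace measure satisfies $\mu_D([\gamma]) = D^{-(|\gamma|+1)}$, which for $\lambda = D$ coincides with $w_D^D(\gamma)$, so weight and measure agree on every cylinder set. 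This collapses the general (two-parameter) Julien--Savinien eigenvalue expression: up to the overall sign that makes $\Delta_s$ non-positive, the eigenvalue on $\C f_\eta$ is $Q_s(f_\eta, f_\eta)/\|f_\eta\|_2^2$, which evaluates to
\begin{multline*}
\frac{2D\, w_D^D(\eta)^{s-2}}{(D-1)\,\mu_D([\eta])} \;+\; \sum_{\gamma \subsetneq \eta} \frac{2\, w_D^D(\gamma)^{s-2}}{\mu_D([\gamma])} \\
= \frac{2D^{3|\eta|+4}}{(D-1)\,D^{s(|\eta|+1)}} \;+\; \sum_{m=0}^{|\eta|} 2 D^{m(3-s)},
\end{multline*}
the sum being over the proper initial segments $\gamma$ of $\eta$, whose lengths are $-1, 0, 1, \dots, |\eta|-1$. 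Summing the geometric series, $\sum_{m=0}^{|\eta|} 2D^{m(3-s)} = 2 + 2D^{3-s}\tfrac{1 - D^{(3-s)|\eta|}}{1 - D^{3-s}}$, and fixing the sign yields exactly the stated formula for $\lambda_\eta$; the same computation with $\eta = \circ$ (empty ancestor sum, $w_D^D(\circ) = \mu_D([\circ]) = 1$) gives $|\lambda_\circ| = \tfrac{2D}{D-1}$, equivalently $G_s(\circ) = \tfrac{D-1}{2D}$. Finally, the constants $D^{|\eta|+2}$ and $D^{-1}$ in the stated eigenspaces are the $L^2(\mu_D)$-normalizations, since $\|f_\eta\|_2^2 = 2\mu_D([\eta e]) = 2D^{-(|\eta|+2)}$ and $\|\chi_v - \chi_{v'}\|_2^2 = 2D^{-1}$, and completeness of $\{1\} \cup E_\circ \cup \bigcup_\eta E_\eta$ follows from the density of the cylinder functions in $L^2(\partial \BB_D, \mu_D)$.

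The main obstacle is organizational rather than conceptual. One must be scrupulous when translating the Julien--Savinien Bratteli-diagram conventions (their edges point the other way, and their paths emanate from a root vertex) into ours, carrying along the length-$(-1)$ empty path $\circ$ with $[\circ] = \partial \BB_D$ and its $D$ ``root edges'' $\epsilon_v$; and one must propagate the factor $\tfrac12$ in~\eqref{eq:Delta}, the normalization of the choice-function measure, the sign convention, and the $L^2$-normalization consistently through the geometric-series manipulation so that the output coincides exactly with the displayed closed forms. Beyond \cite[Theorems~3.9 and~4.3]{julien-savinien-transversal}, the one genuinely new ingredient is the identity $w_D^D = \mu_D$ on cylinder sets supplied by Theorem~\ref{thm-Dixmier-trace-cuntz-algebra-O_D}, which is precisely what reduces the two-parameter Julien--Savinien formulas to the one-parameter expressions above.
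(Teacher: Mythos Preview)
Your proposal is correct and follows essentially the same route as the paper: both invoke \cite[Theorem~4.3]{julien-savinien-transversal} for the structural result (eigenfunctions are the $\chi_{[\eta e]}-\chi_{[\eta e']}$), use Theorem~\ref{thm-Dixmier-trace-cuntz-algebra-O_D} to identify $\mu_D([\gamma])=w_D^D(\gamma)=D^{-(|\gamma|+1)}$, and then evaluate the resulting geometric series. The paper organizes the computation through the auxiliary quantity $G_s(\eta)=\tfrac{(D-1)D^{s(|\eta|+1)}}{2D^{4|\eta|+5}}$ and the Julien--Savinien formula $\lambda_\eta=\sum_{k=-1}^{|\eta|-1}\tfrac{\mu_D[\eta[0,k+1]]-\mu_D[\eta[0,k]]}{G_s(\eta[0,k])}-\tfrac{\mu_D[\eta]}{G_s(\eta)}$, whereas you phrase the same arithmetic as a Rayleigh quotient; the terms match one-for-one. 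One small correction: the scalars $D^{|\eta|+2}$ and $D^{-1}$ appearing in the eigenspaces are not $L^2$-normalizations (multiplying by them does not produce unit vectors) but rather the factors $1/\mu_D([\eta e])$ that Julien--Savinien attach to their eigenfunctions $\tfrac{\chi_{[\eta e]}}{\mu_D([\eta e])}-\tfrac{\chi_{[\eta e']}}{\mu_D([\eta e'])}$; since they do not affect the span this is harmless.
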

\begin{proof}
This follows from evaluating the formulas given in Theorem 4.3 of \cite{julien-savinien-transversal}, using Theorem \ref{thm-Dixmier-trace-cuntz-algebra-O_D} above to calculate the measures of the cylinder sets, and recalling that the diameter $\text{diam}[\gamma]$ of a cylinder set is given by the weight of $\gamma$.  

To be precise, since there are $D$ edges with a given range $v$, the size of the set 
\[
 \bigl\{ (e, e') \in \Lambda^1 \times \Lambda^1: r(e) = r(e') = v, \ e \not= e' \bigr\}
\]
is $D(D-1)$ for any vertex $v$.  Therefore, for any path $\eta \in \Lambda$, the constant $G_s(\eta)$ from Theorem 4.3 of \cite{julien-savinien-transversal} is given by
\[
 G_s(\eta) = \frac{D(D-1) D^{-2(|\eta| + 2)}}{2w_D(\eta)^{s-2}} = \frac{(D-1) D^{s(|\eta| +1)}}{2D^{4|\eta| + 5}}.
\]
Observe that, in the notation of \cite{julien-savinien-transversal}, a path of ``length 0'' corresponds to the empty path $\circ$, that is, whose cylinder set gives entire infinite path space, and a path of ``length 1'' corresponds to a vertex.  In general, the length of a path in  \cite{julien-savinien-transversal} corresponds to the number of vertices that this path traverses; hence a path of length $n$ for them is a path of length $n-1$ for us.  

In order to compute the eigenvalues $\lambda_\eta$ described in Theorem 4.3 of \cite{julien-savinien-transversal}, then, we also need to calculate $G_s(\circ) = G_s(\Lambda^\infty)$.  Since the infinite path space has diameter 1 by Proposition 2.10 of \cite{julien-savinien-transversal}, we obtain 
\[ G_s(\circ) = G_s(\Lambda^\infty) = \frac{D(D-1)}{2 D^2} = \frac{D-1}{2D}.\]

Now, if we denote the empty path $\circ$ by a path of ``length $-1$,'' we can rewrite the formula (4.3) from \cite{julien-savinien-transversal} for the eigenvalue $\lambda_\eta$ associated to a path $\eta$ as 
\begin{align*}
\lambda_\eta & = \sum_{k=-1}^{|\eta| -1} \frac{\frac{1}{D^{k+2}} 
               - \frac{1}{D^{k+1}}}{G_s(\eta[0,k])}
               - \frac{1}{D^{|\eta|+1} G_s(\eta)} \\
             & = \frac{1-D}{D\frac{D-1}{2D}}
               + \sum_{k=0}^{|\eta|-1} \frac{1-D}{D^{k+2}} \frac{2 D^{4k + 5}}{(D-1)D^{s(k+1)}}
               - \frac{2D^{3|\eta| + 4}}{(D-1)D^{s(|\eta| +1)}} \\
             & = - 2 - 2D^{3-s} \frac{1-D^{(3-s)|\eta|}}{1-D^{3-s}}
               - \frac{2D^{3|\eta| + 4}}{(D-1)D^{s(|\eta| +1)}},
\end{align*}
using the notation of Definition \ref{def:bratteli-diagram}, and the fact that 
\[\sum_{k=0}^{|\eta|-1}  \frac{2 D^{3k + 3}}{D^{s(k+1)}} =2D^{3-s} \sum_{k=0}^{|\eta|-1} \frac{D^{3k}}{D^{sk}} = 2D^{3-s} \frac{1-D^{(3-s)|\eta|}}{1-D^{3-s}}.\]
\end{proof}

\section{Wavelets and eigenfunctions for $\mathcal{O}_D$}
\label{wavelets-and-eigenfunctions-O-D}

In this section, we connect the eigenspaces $E_\gamma$ of Theorem \ref{thm:eigen} with the orthogonal decomposition of $L^2(\partial \BB_D, M)$ associated to the wavelets constructed in \cite{MP} Section 3 (see also Section 4 of \cite{FGKP}).  We begin by describing the wavelet decomposition of $L^2(\partial \BB_D, M)$, which is a special case of the wavelets of \cite{MP}  and \cite{FGKP}.
To be precise, the wavelets we discuss here are those associated to the $D \times D$ matrix $A_D$ consisting of all 1's, but the wavelets described in \cite{MP} are defined for any matrix $A$ with entries from $\{0,1\}$.

Let $\Lambda_D$ denote the directed graph with vertex matrix $A_D$.  In what follows, we will assume that we have labeled the $D$ vertices of $\Lambda_D^0$ by $\Z_D =  \{ 0, 1, \ldots, D-1\} ,$
 and we will write infinite paths in $\Lambda^\infty_D = \partial \BB_D$ as strings of vertices $(i_1 i_2 i_3 \ldots )$ where $i_j \in \Z_D$ for all $j$.

Denote by ${\mathcal V}_0$ the (finite-dimensional) subspace of $L^2(\partial \BB_D, M)$ given by
\[
 {\mathcal V}_0 \;=\; \Span \{\chi_{\sigma_i(\partial \BB_D)}:\;i\in\mathbb Z_D\}.
\]
Define an inner product on $\mathbb C^{D}$ by
\begin{equation}\label{eq:inner-prod}
  \bigl\langle (x_j), (y_j) \bigr\rangle_{PF}\;=\;\frac{1}{D}\sum_{j=0}^{D-1}\overline{x_j}y_j.
\end{equation}
We now define a set of $D$ linearly independent  vectors $\{c^{j}:\;0\leq j \leq D-1\}\subset \mathbb C^{D}$, where $c^{j} = (c^{j}_0, \ldots, c^{j}_{D -1})$, by
\[
  c_{\ell}^{0}= 1\;\; \forall \ \ell \in \Z_D,
\]
and 
$\{c^{j}: 1\leq j\leq D-1\}$  an orthonormal basis for the  subspace $\{(1,1,\cdots,1)\}^{\perp}$, with  ${\perp} $ taken with respect to the inner product $\langle \cdot,\cdot \rangle_{PF}$.

We now note that we can write each set $R_{[k]} = \sigma_k (\partial \BB_D)$ 
as a disjoint union:
$$R_{[k]}=\bigsqcup_{j=0}^{D-1}R_{[kj]},$$
where
\[
 R_{[kj]}\;=\; \bigl\{ (i_1i_2\cdots i_n\cdots )
   \in \partial \BB_D:\;\;i_1=k\;\text{and}\;i_2=j \bigr\}.
\]
Thus in terms of characteristic functions,
$$\chi_{R_{[k]}}\;=\;\sum_{j=0}^{D-1}\chi_{R_{[kj]}}\;\;\text{for $k\in\mathbb Z_D$}.$$
Now, define functions $\{f^{j,k}\}_{j, k=0}^{D-1}$  on $\partial \BB_D$ by
$$f^{j, k}(x)\;=\;\sqrt{D}\sum_{\ell=0}^{D-1}c_{\ell}^{j}\chi_{R_{[k{\ell}]}}(x).$$
Moreover,  since $c_\ell^0 = 1$ for all $\ell$, we have 
$$f^{0,k}=\sqrt{D}\sum_{\ell=0}^{D-1}c_{\ell}^{0}\chi_{R_{[k{\ell}]}} = \sqrt{D}\sum_{\ell=0}^{D-1} \chi_{R_{[k\ell]}} =  \sqrt{D} \chi_{R_{[k]}}.$$
It follows that
\[
 \Span \bigl\{ f^{0,k} \bigr\}_{k=0}^{D-1} \;=\; \Span \bigl\{ \chi_{R_{[k]}} \bigr\}_{k=0}^{D-1}\;=\;{\mathcal V}_0.
\]

Now, we can use the functions $f^{j,k}$ to construct a wavelet basis of $L^2(\partial \BB_D, M)$.  First, a definition: for any word $w = w_1 w_2 \cdots w_n \in (\Z_D)^n$, write $S_w = S_{w_1} S_{w_2} \cdots S_{w_n}$, where $S_{w_i} \in L^2(\partial \BB_D, M)$ is the operator defined in Proposition~\ref{MPrepprop}.
\begin{thm}[{\cite[Theorem 3.2]{MP}; \cite[Theorem 4.2]{FGKP}}]
\label{MPwaveletsthm}
Fix an integer $D>1.$  Let $\{S_k\}_{k\in \Z_D}$ be the operators on $L^2(\partial \BB_D, M)$ described in Proposition~\ref{MPrepprop}. Let $\{f^{j,k}:\;j,k\in\mathbb Z_D\}$ be the functions on $\partial \BB_D$ defined in the above paragraphs. Define
$${\mathcal W}_0\;=\; \Span \{f^{j,k}: j, k\in \mathbb Z_D, j \not= 0\};$$
$${\mathcal W}_n= \Span \{S_w(f^{j,k}):\;j,k\in \mathbb Z_D, \ j \not= 0, \text{ and}\;\;w \in (\Z_D)^n\}.$$
Then the subspaces ${\mathcal V}_0$ and $\{{\mathcal W}_n\}_{n=0}^{\infty}$ are mutually pairwise orthogonal in $L^2(\partial \BB_D, M)$ and
$$L^2(\partial \BB_D, M)=\; \Span \left({\mathcal V}_0\oplus \Big[\bigoplus_{n=0}^{\infty} \mathcal{W}_n\Big]\right).$$
\end{thm}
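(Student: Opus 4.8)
The plan is to recognize the statement as a multiresolution-type decomposition in which the ``scaling'' is implemented by the Cuntz isometries $S_i$, and to reduce everything to three ingredients: an orthonormality computation at level zero, the fact that the $S_i$ are isometries with pairwise orthogonal ranges, and the density of cylinder functions. Concretely, for $n\ge 1$ let $\mathcal{A}_n\subseteq L^2(\partial\BB_D,M)$ denote the $D^n$-dimensional subspace of functions depending only on the first $n$ vertices of an infinite path; thus $\mathcal{A}_1=\mathcal{V}_0$, $\mathcal{A}_n\subseteq\mathcal{A}_{n+1}$, and $\mathcal{A}_n=\Span\{\chi_{[u]}:u\text{ a vertex-string of length }n\}$ (valid since $A_D$ has all entries $1$, so every vertex-string is a path). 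I would prove: (i) $\{f^{j,k}\}_{j,k\in\Z_D}$ is an orthonormal basis of $\mathcal{A}_2$, with $\Span\{f^{0,k}\}_k=\mathcal{V}_0$ and $\mathcal{W}_0=\Span\{f^{j,k}:j\ne 0\}$ its orthocomplement in $\mathcal{A}_2$; (ii) $\mathcal{A}_{n+2}=\mathcal{A}_{n+1}\oplus\mathcal{W}_n$ orthogonally, for every $n\ge 0$; (iii) $\bigcup_n\mathcal{A}_n$ is dense in $L^2(\partial\BB_D,M)$. Granting these, a short induction using (ii) gives $\mathcal{A}_{n+1}=\mathcal{V}_0\oplus\bigoplus_{m=0}^{n-1}\mathcal{W}_m$ with all summands mutually orthogonal (for $m<n$ one has $\mathcal{W}_m\subseteq\mathcal{A}_{m+2}\subseteq\mathcal{A}_{n+1}$, which is orthogonal to $\mathcal{W}_n$), and letting $n\to\infty$ together with (iii) yields exactly the asserted decomposition of $L^2(\partial\BB_D,M)$.

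For (i): from $M(R_{[k\ell]})=D^{-2}$ and the disjointness of the sets $R_{[k\ell]}$, a direct computation gives $\langle f^{j,k},f^{j',k'}\rangle_{L^2(M)}=\delta_{k,k'}\,\langle c^j,c^{j'}\rangle_{PF}$. Since $\|c^0\|_{PF}^2=1$ and, by construction, $\{c^j:j\ne 0\}$ is orthonormal and contained in the $\langle\cdot,\cdot\rangle_{PF}$-orthogonal complement of $c^0$, the family $\{f^{j,k}\}$ is orthonormal; being $D^2=\dim\mathcal{A}_2$ vectors lying in $\mathcal{A}_2$, it is an orthonormal basis of $\mathcal{A}_2$. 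The identity $f^{0,k}=\sqrt{D}\,\chi_{R_{[k]}}$ identifies $\Span\{f^{0,k}\}_k$ with $\mathcal{V}_0$, and $\langle c^0,c^j\rangle_{PF}=0$ for $j\ne 0$ gives the orthogonal splitting $\mathcal{A}_2=\mathcal{V}_0\oplus\mathcal{W}_0$.

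For (ii): fix $n$ and $w\in(\Z_D)^n$. Using the explicit form $S_i(\xi)(v_0v_1\cdots)=\sqrt{D}\,\xi(v_1v_2\cdots)$ for $v_0=i$ (Example~\ref{ex:inf-path-repn}), one computes $S_w\chi_{R_{[k]}}=D^{n/2}\chi_{[wk]}$ and $S_wf^{j,k}=D^{(n+1)/2}\sum_\ell c_\ell^j\chi_{[wk\ell]}$; in particular $S_wf^{j,k}\in\mathcal{A}_{n+2}$. Because $S_w$ is an isometry, $\{S_wf^{j,k}\}_{j,k}$ is orthonormal, spanning a $D^2$-dimensional space $U_w\subseteq\operatorname{range}(S_w)$; the Cuntz relations \eqref{CK1} and \eqref{CK3} force $S_w^*S_{w'}=0$ for distinct words $w,w'$ of the same length, so the $U_w$ (with $|w|=n$) are pairwise orthogonal. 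Since $\sum_{|w|=n}\dim U_w=D^n\cdot D^2=D^{n+2}=\dim\mathcal{A}_{n+2}$ and $\bigoplus_{|w|=n}U_w\subseteq\mathcal{A}_{n+2}$, in fact $\mathcal{A}_{n+2}=\bigoplus_{|w|=n}U_w$. Within each $U_w$, the relation $S_wf^{0,k}=D^{(n+1)/2}\chi_{[wk]}$ shows $\Span\{S_wf^{0,k}\}_k=\Span\{\chi_{[wk]}\}_k$; summing over $w$ gives $\bigoplus_{|w|=n}\Span\{\chi_{[wk]}\}_k=\Span\{\chi_{[u]}:|u|=n+1\}=\mathcal{A}_{n+1}$, whose orthogonal complement inside $\bigoplus_{|w|=n}U_w$ is precisely $\Span\{S_wf^{j,k}:|w|=n,\,j\ne 0\}=\mathcal{W}_n$. (The coarse and detail parts are already orthogonal inside each $U_w$, by the level-zero identity $\langle f^{0,k},f^{j,k'}\rangle=0$ for $j\ne 0$.) This is (ii).

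Statement (iii) is standard: the cylinder sets generate the Borel $\sigma$-algebra of the compact metrizable space $\partial\BB_D$, so their characteristic functions span a dense subspace of $L^2(\partial\BB_D,M)$, and this span is exactly $\bigcup_n\mathcal{A}_n$. The main obstacle is step (ii): one has to match the scaling action of $S_w$ against the vertex-level filtration $(\mathcal{A}_n)_n$ and carefully peel off, inside $\operatorname{range}(S_w)$, the ``coarse'' subspace $\Span\{S_wf^{0,k}\}_k$ (which is reabsorbed into $\mathcal{A}_{n+1}$) from the ``detail'' subspace contributing to $\mathcal{W}_n$; the remaining work is dimension counting together with the orthonormality already established in (i).
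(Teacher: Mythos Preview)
Your proof is correct. Note, however, that the paper does not actually supply a proof of this theorem: it is stated with attribution to \cite[Theorem~3.2]{MP} and \cite[Theorem~4.2]{FGKP}, and the text following it merely records the formula $S_w(f^{j,k})=D^{(n+1)/2}\sum_\ell c^j_\ell \chi_{[w_1\cdots w_n k\ell]}$ as input for the next theorem. Your argument---introducing the filtration $(\mathcal{A}_n)$ by cylinder functions, checking that $\{f^{j,k}\}$ is an orthonormal basis of $\mathcal{A}_2$ via $\langle f^{j,k},f^{j',k'}\rangle=\delta_{k,k'}\langle c^j,c^{j'}\rangle_{PF}$, transporting this orthogonal splitting through the Cuntz isometries $S_w$, and closing by dimension counting and density of cylinder functions---is precisely the multiresolution scheme underlying the cited proofs in \cite{MP} and \cite{FGKP}, carried out cleanly in the special case $A=A_D$.
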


To calculate the functions $S_w (f^{j,k})$, we first observe that 
\[ S_i \chi_{R_{[k]}} = \sqrt{D} \chi_{R_{[ik]}};\]
 consequently, if $w = w_1 w_2 \cdots w_n$,
\begin{equation}
\label{eq:wavelet-fcns}
S_w(f^{j,k}) = D^{(n+1)/2} \sum_{\ell = 0}^{D-1} c^j_\ell \chi_{[w_1 w_2 \cdots w_n k \ell]}.
\end{equation}
If we instead write the finite path $w $ as $ \gamma$, and observe that the edges in $E_{n+1}$ with range $k$ are in bijection with the pairs $(k\ell)_{\ell \in \Z_D}$, we see that for any path $\gamma \in F\BB_D$ with $|\gamma | = n-1$,
\begin{equation}
\label{eq:wavelet-fcns-2}
S_\gamma (f^{j, k}) = D^{(n+1)/2} \sum_{e \in E_{n+1}} c^j_e \chi_{[\gamma k e]}.
\end{equation}
A few more calculations lead us to the following 
\begin{thm}
\label{equalweightswaveletOD}
Let $\Lambda_D$ be the directed graph whose $D \times D$ adjacency matrix consists of all 1's.  For each $\gamma \in \Lambda$, let $E_\gamma$ be the eigenspace of the Laplace--Beltrami operator $\Delta_s$ described in Theorem  \ref{thm:eigen}.
Then 
for all $n \geq 0$ we can write
\[
 \mathcal W_{n} = \bigoplus_{\gamma \in 
  {{}\Lambda^{n}}} E_\gamma.
\]
In particular,
\[
 L^2(\Lambda^\infty, \mu) = \mathcal V_{-1} \oplus 
  {\mathcal W}_{-1} \oplus \Biggl[ \, \bigoplus_{n \geq 0} \bigoplus_{\gamma \in \Lambda^{n}} E_\gamma   \Biggr] =  {\mathcal V_{0}\oplus \Biggl[ \, \bigoplus_{n \geq 0} \bigoplus_{\gamma \in \Lambda^{n}} E_\gamma   \Biggr].}
\]
Moreover, for all $i \in \Z_D$ and all $\gamma \in F\BB_D$,, the isometry $S_i$ given by 
\[
 {S}_i f((v_1 v_2\ldots )) = \begin{cases}  D^{1/2} f  ((v_2 v_3 \ldots))& \text{if } v_1 = i, \\
          0 & \text{else.}
          \end{cases}
\]
maps $E_\gamma$ to $E_{i \gamma}$ unitarily.
\end{thm}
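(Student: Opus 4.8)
The plan is to realize each eigenspace $E_\gamma$ of Theorem~\ref{thm:eigen} as the span of one packet of the wavelet functions $S_w(f^{j,k})$ of Theorem~\ref{MPwaveletsthm}, by comparing the explicit formulas \eqref{eq:wavelet-fcns}--\eqref{eq:wavelet-fcns-2} with the description of $E_\gamma$ as a span of differences of characteristic functions. Fix $n\ge 0$ and a path $\gamma\in\Lambda_D^n$ with vertex string $v_0v_1\cdots v_n$, and set $w=v_0v_1\cdots v_{n-1}\in(\Z_D)^n$ and $k=v_n$. Since $A_D$ has all entries $1$, the assignment $\gamma\mapsto(w,k)$ is a bijection $\Lambda_D^n\to(\Z_D)^n\times\Z_D$, and under it the cylinder $[w_1\cdots w_nk\ell]$ appearing in \eqref{eq:wavelet-fcns} is exactly $[\gamma e_\ell]$, where $e_\ell$ is the unique edge of $\BB_D$ extending $\gamma$ with $s(e_\ell)=\ell$; as $\ell$ runs over $\Z_D$ these $e_\ell$ are precisely the $D$ edges extending $\gamma$. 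Thus $S_w(f^{j,k})=D^{(n+1)/2}\sum_{\ell\in\Z_D}c^j_\ell\,\chi_{[\gamma e_\ell]}$, and since $\{c^j:j\ne 0\}$ is a basis of $\{(1,1,\dots,1)\}^\perp=\{(a_\ell)_\ell:\sum_\ell a_\ell=0\}$, the span of $\{S_w(f^{j,k}):j\ne0\}$ is precisely $\{\sum_{\ell}a_\ell\chi_{[\gamma e_\ell]}:\sum_\ell a_\ell=0\}$. The differences $\chi_{[\gamma e_\ell]}-\chi_{[\gamma e_{\ell'}]}$ span that same subspace, and after rescaling by $D^{|\gamma|+2}$ they are exactly the generators of $E_\gamma$ recorded in Theorem~\ref{thm:eigen}; hence $\Span\{S_w(f^{j,k}):j\ne0\}=E_\gamma$.

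Summing over $\gamma\in\Lambda_D^n$ (equivalently over $(w,k)$) then gives $\mathcal W_n=\sum_{\gamma\in\Lambda_D^n}E_\gamma$, and this sum is orthogonal because each $E_\gamma$ consists of functions supported on $[\gamma]$ and distinct paths of equal length have disjoint cylinder sets (note that self-adjointness of $\Delta_s$ would not suffice here, since in the even-weight case many $E_\gamma$ share the same eigenvalue); so $\mathcal W_n=\bigoplus_{\gamma\in\Lambda_D^n}E_\gamma$, which is the first assertion. Feeding this into Theorem~\ref{MPwaveletsthm} gives $L^2(\partial\BB_D,M)=\mathcal V_0\oplus\bigoplus_{n\ge0}\bigoplus_{\gamma\in\Lambda_D^n}E_\gamma$, and the finer splitting $\mathcal V_0=\mathcal V_{-1}\oplus\mathcal W_{-1}$ is the same linear algebra one level lower: $\mathcal V_0=\Span\{\chi_{[v]}:v\in V_0\}$ decomposes orthogonally as the line spanned by the constant $1=\sum_v\chi_{[v]}$ (the kernel of $\Delta_s$) together with the mean-zero subspace $\Span\{\chi_{[v]}-\chi_{[v']}\}=E_\circ=\mathcal W_{-1}$.

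For the equivariance statement I would first compute $S_i$ on a single cylinder set. From the formula for $S_i$ in Proposition~\ref{MPrepprop} and $\sigma_i(v_0v_1\cdots)=(iv_0v_1\cdots)$ one obtains $S_i\chi_{[\gamma]}=D^{1/2}\chi_{[i\gamma]}$, where $i\gamma$ is $\gamma$ with the unique edge $i\to r(\gamma)$ prepended, so $|i\gamma|=|\gamma|+1$. Applying this to a generator $D^{|\gamma|+2}(\chi_{[\gamma e]}-\chi_{[\gamma e']})$ of $E_\gamma$ yields $D^{|\gamma|+5/2}(\chi_{[i\gamma e]}-\chi_{[i\gamma e']})$, a nonzero scalar multiple of the generator $D^{|i\gamma|+2}(\chi_{[i\gamma e]}-\chi_{[i\gamma e']})$ of $E_{i\gamma}$; here one uses that $\gamma$ and $i\gamma$ have the same source vertex, so the $D$ edges extending $\gamma$ are indexed in the same way as the $D$ edges extending $i\gamma$. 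Thus $S_i$ sends a spanning set of $E_\gamma$ onto a spanning set of $E_{i\gamma}$, and being linear and---by the Cuntz relation $S_i^\ast S_i=I$---an isometry of $L^2(\partial\BB_D,M)$, its restriction is an inner-product-preserving bijection $E_\gamma\to E_{i\gamma}$, i.e.\ a unitary. The case $\gamma=\circ$ is identical, with $\chi_{[v]}$ in place of $\chi_{[\gamma e]}$ and $S_i\chi_{[v]}=D^{1/2}\chi_{R_{[iv]}}$.

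The genuinely delicate point---and the main obstacle---is keeping the two indexing conventions aligned throughout: the ``vertex-string'' bookkeeping used for the wavelets in \cite{MP,FGKP} versus the ``edge-path'' bookkeeping used for the Bratteli-diagram eigenspaces imported from \cite{julien-savinien-transversal}, including the off-by-one in path length (a length-$m$ path there is a length-$(m-1)$ path here) and the handling of the empty path $\circ$. Once the bijection $\gamma\leftrightarrow(w,k)$ and the identification $[w_1\cdots w_nk\ell]=[\gamma e_\ell]$ are set up and the powers of $D$ in the normalizations tracked, the whole argument collapses to the elementary fact that the mean-zero hyperplane in $\C^D$ is equally well described by the orthonormal family $\{c^j\}_{j\ne0}$ and by the differences $\{\chi_\ell-\chi_{\ell'}\}$.
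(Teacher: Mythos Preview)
Your argument is correct and rests on the same core observation as the paper's proof: both the eigenspace $E_\gamma$ and the wavelet packet indexed by $\gamma$ are the ``mean-zero'' subspace of $\Span\{\chi_{[\gamma e]}:e\}$, described once via the differences $\chi_{[\gamma e]}-\chi_{[\gamma e']}$ and once via the orthonormal family $\{c^j\}_{j\neq 0}$.

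The execution differs in one minor but pleasant way. The paper shows only the inclusion $E_\gamma\subseteq\mathcal W_n$ and then closes with a dimension count ($\dim E_\gamma=D-1$ from \cite{julien-savinien-transversal}, $|\Lambda^n|=D^{n+1}$, $\dim\mathcal W_n=D^{n+1}(D-1)$). You instead set up the explicit bijection $\gamma\leftrightarrow(w,k)$ and identify $E_\gamma$ \emph{directly} with $\Span\{S_w(f^{j,k}):j\neq 0\}$, so no dimension count is needed; your orthogonality argument via disjoint supports (rather than via self-adjointness of $\Delta_s$, which as you note would be insufficient here) is exactly right. Your treatment of the $S_i$-equivariance is also more explicit than the paper's one-line remark; the computation $S_i\chi_{[\gamma]}=D^{1/2}\chi_{[i\gamma]}$ together with $S_i^*S_i=I$ is precisely what is needed, and your handling of the $\gamma=\circ$ case and of the indexing conventions is careful and correct.
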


\begin{proof}
Let $\gamma \in F\BB_D$ be a path of length $n$.
Recall that the subspaces $E_\gamma$ are spanned by functions of the form $\chi_{[\gamma e]} - \chi_{[\gamma e']}$, where $e \not= e'$ are edges in $E_{n+1}$.  In other words, if we write a spanning function $\xi_{e, e'} = \chi_{[\gamma e]} - \chi_{[\gamma e']}$ of $E_\gamma$ as a linear combination of characteristic functions of cylinder sets, we have 
\[ \xi_{e, e'} = \sum_{f\in E_{n+1}} d_f \chi_{[\gamma f]}\]
where $d_e = 1, d_{e'} = -1, d_f = 0 \ \forall \ f \not= e, e'$.  In other words, the vector 
\[(d_f)_{r(f) = s(\gamma), f\in E_{n+1}}\]
 is in the subspace $(1, 1, \ldots, 1)^\perp$ of $\C^D$ which is orthogonal to $(1,1, \ldots, 1)$ in the inner product \eqref{eq:inner-prod}.  It follows that $E_\gamma \subseteq \mathcal{W}_n$ whenever $|\gamma| = n$.
 
 Now, Theorem 4.3 of \cite{julien-savinien-transversal} tells us that each space $E_\gamma$ has dimension $D-1$.  Moreover,  there are $D^{n+1}$  paths $\gamma$ of length $n$, and $E_\gamma \perp E_\eta$ for all $\gamma, \eta$ with $|\gamma| = |\eta|$.  Therefore, 
 \[ \dim \left( \bigcup_{|\gamma| = n} E_\gamma\right) = {D^{n+1}(D-1)}.\]
 Similarly, $\dim \mathcal{W}_n = \text{Card} (F^{n-1}\BB_D) \text{Card} ( \{ f^{j,k}\}_{j\not= 0}) = {D^n \cdot D(D-1)}{}$.  This equality of dimensions thus implies that 
 \[ \mathcal{W}_n = \bigcup_{|\gamma| = n} E_\gamma \ \forall \ n \in \N_0.\]
 
 For the last assertion, we simply observe that $S_i$ is an isometry with $S_i S_i^* = id|_{E_i}$.
\end{proof}

\subsection{Wavelets on $\mathbb{S}_A$}
Let $A$ be an $N \times N$ $\{0, 1\}$-matrix with precisely $D$ nonzero entries.
In this section we will describe wavelets on $\mathbb{S}_{A}$ associated to the Cuntz algebra $\mathcal{O}_D$ using the measure-preserving isomorphism between $(\mathbb{S}_A, H)$ and $(\partial \BB_D, M)$ described in Theorem~\ref{thm:measure_preserving}.

Since all edges in $\Lambda_D$ can be preceded (or followed) by any other edge, this infinite path space corresponds simply to $[0,1]$ by thinking of points in $[0,1]$ as infinite sequences in $\{0, \ldots, D-1\}^{\N}$ and using the $D$-adic expansion. 
  
The natural correspondence between $\mathbb{S}_A$ and points from $[0,1]$ in their $D$-adic expansions is given by labeling the nonzero entries in $A$ by the elements of $\{0, 1, \ldots, D-1\}$, and then identifying a cylinder set 
$[(x_1,y_1),(x_2,y_2),\dots,(x_n,y_n)]$ in $\mathbb{S}_A$ with the cylinder 
 $[d_1\dots d_n]$, where $d_i \in \{0, \ldots, D-1\}$ is the integer corresponding to $A_{x_i,y_i}$.
  
Thus, we obtain wavelets on $\mathbb{S}_A$ by using this identification to transfer the wavelets associated to the infinite path representation of $\mathcal{O}_D$ into functions on $\mathbb{S}_A$.  These wavelets will agree with the eigenfunctions $E_\gamma$ of the Laplace--Beltrami operator associated to the Bratteli diagram for $\mathcal{O}_D$, by Theorem \ref{equalweightswaveletOD} above. 

To be more precise, Theorem \ref{equalweightswaveletOD} implies that we can interpret the eigenfunctions of Theorem \ref{thm:eigen} as a 
wavelet decomposition of $L^2(\partial \BB_D, M)$, with 
\[
 E_\gamma= \Span \biggl\{  \frac{1}{M[\gamma e]} \chi_{[\gamma e]} - \frac{1}{M[\gamma e']} \chi_{[\gamma e']} \biggr\}.
\]
Here $\gamma$ is a finite path in the graph $\Lambda_D$ associated to $\mathcal{O}_D$; writing $\gamma$ as a string of vertices, equivalently, $\gamma =d_0 d_1 d_2 \cdots d_n$ for $d_i \in \{ 0, \ldots, D-1\}$.  Thus, if $d_i \in \Z_D$ corresponds to the pair $(x_i, y_i) \in S_A$, and $e, e' \in \{0, \ldots, D-1\}$ correspond to the pairs $(z, w), (z',w')$ in {{}the symbol set} $S_A$, the wavelet on $L^2(\mathbb{S}_A, H)$ associated to $\ \frac{1}{M[\gamma e]} \chi_{[\gamma e]} - \frac{1}{M[\gamma e']} \chi_{[\gamma e']}$ is 
\begin{multline*}
 \frac{1}{H([(x_1,y_1),(x_2,y_2),\dots,(x_n,y_n), (z, w)])} \chi_{[(x_1,y_1),(x_2,y_2),\dots,(x_n,y_n), (z,w)]}\\
 - \frac{1}{H([(x_1,y_1),(x_2,y_2),\dots,(x_n,y_n), (z', w')])} \chi_{[(x_1,y_1),(x_2,y_2),\dots,(x_n,y_n), (z',w')]} \\
 	= \frac{1}{D^{n+2}}\left(  \chi_{[(x_1,y_1),(x_2,y_2),\dots,(x_n,y_n), (z, w)]} -  \chi_{[(x_1,y_1),(x_2,y_2),\dots,(x_n,y_n), (z', w')]} \right). 
\end{multline*}
This correspondence allows us to transfer the spaces $E_\gamma$ from $L^2(\partial \BB_D, M)$ to $L^2(\mathbb{S}_A, H)$, giving us an orthogonal decomposition of the latter.  Moreover, the ``scaling and translation'' operators $S_i$ of Theorem \ref{equalweightswaveletOD} from the infinite path representation of $\mathcal{O}_D$ transfer (via the same correspondence between pairs ($x, y)$ with $A({x,y}) \not= 0$ and elements of $\{0, \ldots, D-1\}$) to the operators $T_i$  on $L^2(\mathbb{S}_A, H)$ introduced in Theorem \ref{thm:measure_preserving}.  In other words, these operators $T_i$ allow us to move between the orthogonal  subspaces of $L^2(\mathbb{S}_A, H)$, enabling us to view this as a wavelet decomposition.

\section{Spectral triples  and Laplacians for the Cuntz algebra $\mathcal{O}_D$: the uneven weight case}
\label{sec:spect-triples-O-2}

\subsection{The spectral triple}

We are going to work in the general framework of Section~\ref{sec:spect-triples} with the difference that the weight (which we call $w_D^r$) is different from the Perron--Frobenius weights  $w_D^\lambda$ we previously defined in Definition \ref{def-choice-of-weight-Cuntz-algebra}.
For this section, 
we require that our weight is  defined on finite paths as in Definition \ref{def-weight-paths}, rather than on vertices as in Definition \ref{def-choice-of-weight-Cuntz-algebra}. 
In particular, the weight $w_D^r$ will not be self-similar in the sense that $w_D^r(\gamma)$ will not depend only on the length and the source of $\gamma$, but also on the precise sequence of edges making up $\gamma$.

\begin{defn}
\label{def-choice-of-weight-Cuntz-O-D-algebra}
 Fix a vector $r = (r_1, \ldots, r_D)$ of positive numbers satisfying $\sum_i r_i =1$. (We also note that this condition is not essential, although it makes a nice normalization.)
The weight $w_D^r$ on the graph $\Lambda_D$  with $D$ 
vertices $v_1, \ldots, v_D$ (equivalently, the Bratteli diagram $\mathcal{B}_D$) associated 
to the matrix $A_D$  is defined as follows.
\begin{enumerate}
\item Whenever $\gamma$ is the trivial (empty) path $\circ$, we set $w_D^r(\circ) =1$.
\item {Associate to each vertex $v_i$ the weight $r_i$:}
\[
 w^r_D (v) =  r_v,\ \forall v \in \Lambda_0.
\]
\item  Given a path $\gamma = (e_1 \ldots e_n)$  with $|e_j|=1$, $s(e_i) = v_{j_i}$, and $r(e_1) = v_{j_0}$, we set the weight of $\gamma$ to be
\[
 w^r_D (\gamma)  = \prod_{i=0}^n r_{j_i}  .
\]
\item The diameter $\diam [\eta]$ of a cylinder set $[\eta]$
is defined to be equal to its weight,
\[
\diam [\eta] = w^r_D (\eta) .
\] 
\end{enumerate}
\end{defn}

Note in particular that $[\circ]= \Lambda_D^{\infty}$ and so $diam[\circ]=1$, which is
consistent with the choice of our normalization.

The set of finite paths on a graph has a natural tree structure.
{In fact, if  we denote by  $(e_1 \ldots e_n)$ a string of composable edges (thus requiring $s(e_{i-1}) = r(e_i),\  \forall i$)} then  the ``parent'' of $(e_1 \ldots e_n)$  is $(e_1 \ldots e_{n-1})$; the root is the path $\circ$ of length $-1$ which corresponds to $\Lambda^\infty_D$.
In addition, the weight $w_D^r(\gamma)$ decreases to $0$ as the length of $\gamma$,  $|\gamma|$, increases to infinity.
Therefore, the Pearson--Bellissard construction from~\cite{pearson-bellissard-ultrametric} applies, and there is a spectral triple associated to the set of infinite paths as in Theorem~\ref{prop:spectral} (see also~\cite{julien-savinien-transversal,FGKJP-spectral-triples}).  To be more precise, we have:

\begin{prop}\label{prop:spectral-uneven}
Let $\mathcal{B}_D$ be the Bratteli diagram associated to the matrix $A_D$.
Let $(\BB_D, w_D^r)$ be the weighted Bratteli diagram given in Definition~\ref{def-choice-of-weight-Cuntz-O-D-algebra}.
Let $(\partial \BB_D, d_w^r)$ be  the associated ultrametric Cantor set.
Then there is an even spectral triple $(C_{\text{Lip}}(\partial \BB_D), \mathcal{H'}, \pi_\tau', \slashed{D}', \Gamma')$.
\end{prop}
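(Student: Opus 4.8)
The plan is to verify that the weighted Bratteli diagram $(\BB_D, w_D^r)$ of Definition~\ref{def-choice-of-weight-Cuntz-O-D-algebra} satisfies the hypotheses needed to invoke the Pearson--Bellissard construction \cite{pearson-bellissard-ultrametric} (equivalently, the construction reviewed in Theorem~\ref{prop:spectral}, which is itself a specialization of Section~3 of \cite{julien-savinien-transversal}). Concretely, this amounts to checking that $w_D^r$ is a weight in the sense of Definition~\ref{def-weight-paths}, since once that is established the spectral triple $(C_{\text{Lip}}(\partial \BB_D), \mathcal{H}', \pi_\tau', \slashed{D}', \Gamma')$ is produced verbatim by the same recipe as in Theorem~\ref{prop:spectral}, with $w_D^\lambda$ replaced by $w_D^r$ throughout. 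So the real content is the three conditions (a), (b), (c) of Definition~\ref{def-weight-paths}.

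First I would check condition (a): $w_D^r(\circ) = 1$ holds by part (1) of Definition~\ref{def-choice-of-weight-Cuntz-O-D-algebra}. Next, for condition (c), suppose $\eta, \nu$ are finite paths with $s(\eta) = r(\nu)$; writing $\eta = (e_1 \ldots e_n)$ and $\nu = (f_1 \ldots f_m)$ with $m \geq 1$, the formula in part (3) gives $w_D^r(\eta\nu) = w_D^r(\eta) \cdot \prod_{i=1}^m r_{j'_i}$ where the $r_{j'_i} \in (0,1)$ are the weights of the sources of $f_1, \ldots, f_m$ (note that the source of $e_n$ and the range of $f_1$ coincide, so the factor is counted once). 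Since each $r_{j'_i} < 1$ (strict, because all $r_i > 0$ and $\sum_i r_i = 1$ with $D > 1$), we get $w_D^r(\eta\nu) < w_D^r(\eta)$. For condition (b), I would observe that $\max_i r_i =: \rho < 1$, so for any path $\gamma$ of length $n$ one has $w_D^r(\gamma) \leq \rho^{\,n+1}$, hence $\sup\{w_D^r(\eta) : \eta \in F^n\BB_D\} \leq \rho^{\,n+1} \to 0$ as $n \to \infty$.

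Having established that $w_D^r$ is a genuine weight, I would then invoke Proposition~\ref{prop-on-weights} to conclude that $d_w^r(x,y) = w_D^r(x \wedge y)$ is an ultrametric on $\partial \BB_D$ inducing the cylinder-set topology, so that $(\partial \BB_D, d_w^r)$ is an ultrametric Cantor set (it is a Cantor set since $D \geq 2$ forces every row sum of $A_D$ to be at least $2$, by Corollary~\ref{cor:suff-cond-cantor}). Finally, applying the Pearson--Bellissard/Julien--Savinien construction to this ultrametric Cantor set — choosing, for each choice function $\tau : F\BB_D \to \partial \BB_D \times \partial \BB_D$, the representation $\pi_\tau'$, the Dirac operator $\slashed{D}'$ built from $1/w_D^r(\gamma)$ in place of $1/w_D^\lambda(\gamma)$, the Hilbert space $\mathcal{H}' = \ell^2(F\BB_D) \otimes \C^2$, and the grading $\Gamma'$ — yields the desired even spectral triple, exactly as in Theorem~\ref{prop:spectral}.

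The only subtlety, and the step I expect to require the most care, is condition (b): unlike the self-similar weight $w_D^\lambda$, the weight $w_D^r$ is not constant on paths of a given length, so the decay must be controlled by the uniform bound $\rho = \max_i r_i < 1$ rather than by a single scaling factor. This is a mild point, but it is precisely where the uneven-weight case genuinely differs from Section~\ref{sec:spect-triples}, so it is worth stating explicitly.
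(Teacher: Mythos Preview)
Your proposal is correct and follows essentially the same approach as the paper: verify that $w_D^r$ satisfies the weight axioms (the paper compresses this to a one-line remark that the set of finite paths has a tree structure and that $w_D^r(\gamma)$ decreases to $0$ as $|\gamma|\to\infty$), and then invoke the Pearson--Bellissard construction as in Theorem~\ref{prop:spectral}. Your version is more explicit in checking conditions (a)--(c) of Definition~\ref{def-weight-paths} and in noting the role of $\rho=\max_i r_i<1$, but there is no substantive difference in strategy.
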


The $\zeta$-function associated to the spectral triple of Theorem~\ref{prop:spectral} is given by
\[
\zeta^r_D (s) =\frac{1}{2}\Tr(|{\slashed{D}'}|^{-s})=\sum_{\lambda \in F(\BB_D)_\circ} \big(w_D^r(\lambda)\big)^s.
\]
We now want to compute the abscissa of convergence $s_r$ of the above $\zeta$-function. 

\begin{prop}\label{prop-abscissa-of-conv-O-2}
 The abscissa of convergence $s_r$ of the $\zeta$-function $\zeta^r_D  (s) $ associated to the spectral triple in Proposition \ref{prop:spectral-uneven} is~$1$.
\end{prop}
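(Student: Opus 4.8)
The plan is to exploit the product structure of the weight $w_D^r$ together with the fact that, since $A_D$ has all entries equal to $1$, every $(n+1)$-tuple of vertices $(v_{j_0},v_{j_1},\dots,v_{j_n})$ is realized by exactly one finite path of $\BB_D$ of length $n$; in particular $\Card(F^n\BB_D)=D^{n+1}$. So the computation of $\zeta_D^r$ will reduce to summing a geometric series.

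First I would group the sum defining $\zeta_D^r(s)$ by path length. For a path $\gamma$ of length $n$ passing through the vertices $v_{j_0},v_{j_1},\dots,v_{j_n}$, Definition~\ref{def-choice-of-weight-Cuntz-O-D-algebra} gives $w_D^r(\gamma)=\prod_{i=0}^n r_{j_i}$, hence $\bigl(w_D^r(\gamma)\bigr)^s=\prod_{i=0}^n r_{j_i}^s$. Summing over all such $\gamma$ — equivalently over all choices of $j_0,\dots,j_n\in\{1,\dots,D\}$ — the sum factors:
\[
\sum_{\gamma\in F^n\BB_D}\bigl(w_D^r(\gamma)\bigr)^s=\left(\sum_{j=1}^D r_j^s\right)^{n+1}.
\]
Writing $\phi(s):=\sum_{j=1}^D r_j^s$ and adding the contribution $1$ of the empty path $\circ$ (which has length $-1$ and weight $1$), we obtain
\[
\zeta_D^r(s)=1+\sum_{n=0}^\infty \phi(s)^{n+1}=\sum_{m=0}^\infty \phi(s)^m ,
\]
a geometric series which converges precisely when $\phi(s)<1$, and which, for real $s$, diverges as soon as $\phi(s)\ge 1$ (all terms being positive and at least $1$).

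It then remains to identify $\{s:\phi(s)<1\}$. Since $D\ge 2$ and the $r_j$ are positive with $\sum_j r_j=1$, each $r_j$ lies in $(0,1)$; hence $s\mapsto r_j^s$ is continuous and strictly decreasing on $\R$, and so is $\phi$. Moreover $\phi(1)=\sum_j r_j=1$. Therefore $\phi(s)<1$ if and only if $s>1$, which shows that the abscissa of convergence of $\zeta_D^r$ is exactly $1$.

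This argument involves no real obstacle; the only thing demanding care is the bookkeeping around path lengths — the empty path has length $-1$ and weight $1$, a path of length $n$ traverses $n+1$ vertices, and one must check that every vertex sequence genuinely yields a path, which is precisely where the hypothesis that $A_D$ has all entries equal to $1$ enters.
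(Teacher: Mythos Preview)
Your proof is correct and follows essentially the same route as the paper: both group the $\zeta$-sum by path length, use the product form of $w_D^r$ to factor the level-$n$ sum as $\bigl(\sum_j r_j^s\bigr)^{n+1}$, recognize the resulting geometric series, and then use monotonicity of $s\mapsto\sum_j r_j^s$ together with $\sum_j r_j=1$ to pin the abscissa at $1$. The only cosmetic difference is that the paper phrases the factorization via a formal polynomial $P(X_1,\dots,X_D)=(\sum_i X_i)^{n+1}$ and a bijection between monomials and paths, whereas you factor the sum directly.
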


\begin{proof}
The formula for the $\zeta$-function can be written as follows:
\begin{equation}\label{eq:zeta-r}
\zeta^r_D(s)  = \frac{1}{2}\Tr(|{\slashed{D}'}|^{-s}) = \sum_{n =-1}^\infty \sum_{\lambda \in \Lambda^n} \Big( w_D^r(\lambda)\Big)^s,
\end{equation}
with the convention that a path of length $-1$ is the empty path  $ \circ $ with associated cylinder set $\Lambda^\infty$.
In order to enumerate how many paths of which weight there are in $F(\BB)_\circ$, we will use the following argument. Consider  the following formal polynomial in $D$ variables $X_1, \ldots, X_D$  with integer coefficients:
\[
 P (X_1, \ldots, X_D) = \Bigl( \sum_{i=1}^d X_i  \Bigr)^{n+1}.
\]
After expanding, each monomial is of the form $c \prod_i X_i^{\alpha_i}${{} where $c$ is a constant}. The constant $c$ counts how many partitions of $\{0, \ldots, n\}$ into $D$ (possibly empty) subsets there are, of cardinality respectively $\alpha_1, \ldots, \alpha_D$. The set of such partitions  for all possible choices of $\alpha_1,\ldots, \alpha_D$ is in bijection with 
$F^n\mathcal{B}_D$: given $\gamma = (e_1 \ldots e_n)$ (with $|e_i|=1$ and $s(e_{i-1}) = r(e_i),\  \forall i$), let $U_i = \{j \in \{0, \ldots, n\} \ : \ s(e_j) = v_i\}$.  
One sees that $\{U_i\}_{i=1}^{D}$ defines a partition of $\{0, \ldots, n\}$, and the map from $F^n\mathcal{B}_D$, the set of finite paths of $\mathcal{B}_D$ of length $n$,  to the set of such partitions is a bijection.  
Indeed, 
$$w_D^r (\gamma) = \prod_i r_i^{\alpha_i}.$$ 
Now, we see that the sum in Equation~\eqref{eq:zeta-r} can be rewritten as
\[
 \zeta^r_D (s) = {\sum_{n=-1}^\infty P(r_1, \ldots, r_D) ^{s(n+1)} }= \sum_{n = -1}^\infty \Bigl( \sum_{i=1}^D r_i^s \Bigr)^{n+1}.
\]
This is a geometric series, which converges if and only if $\sum_i r_i^s < 1$.
The function $s \mapsto \sum_i r_i^s$ is a decreasing function on $\R_+$ (since all the $r_i$ are less than $1$), and $\sum_i r_i = 1$.
Therefore, the abscissa of convergence is exactly $s_r = 1$.
\end{proof}

\begin{rmk}
 Note that this guarantees that the upper Minkowski dimension of $(\partial \BB_D, d_{w^r})$ is $1$, see~\cite[Theorem~2]{pearson-bellissard-ultrametric}.
\end{rmk}


\begin{thm}
\label{thm-Dixmier-trace-uneven-weights}
 The measure $\mu_D^r$ on $\partial \BB_D$ induced by the Dixmier trace is defined by $\mu_D^r ([\gamma]) = w_D^r (\gamma)$.
\end{thm}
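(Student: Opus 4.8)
The plan is to follow the template of the proof of Theorem~\ref{thm-Dixmier-trace-cuntz-algebra-O_D}, using the characterization of $\mu_D^r$ by Equation~\eqref{eq:m_ind_Dix}, but now keeping track of the combinatorics of the non-self-similar weight $w_D^r$. First I would fix a nonempty finite path $\gamma$ and evaluate
\[
\mu_D^r([\gamma]) = \lim_{s\downarrow 1}\frac{\sum_{\eta\in F_\gamma(\BB_D)_\circ}\big(w_D^r(\eta)\big)^s}{\sum_{\eta\in F(\BB_D)_\circ}\big(w_D^r(\eta)\big)^s},
\]
where as in the previous proof I isolate the empty-path term in the denominator, and where $\gamma\ne\circ$ means $\eta=\circ$ does not occur in the numerator. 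The empty-path case $\gamma=\circ$ is immediate: the ratio is identically $1$, and $w_D^r(\circ)=1$, so $\mu_D^r([\circ]) = 1 = w_D^r(\circ)$.

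For the denominator I would invoke the computation already carried out in the proof of Proposition~\ref{prop-abscissa-of-conv-O-2}, namely
\[
\zeta_D^r(s) = \sum_{n=-1}^\infty \Bigl(\sum_{i=1}^D r_i^s\Bigr)^{n+1} = \frac{1}{1-\sum_i r_i^s},
\]
valid for $s>1$. For the numerator, the key observation is that any $\eta\in F_\gamma\BB_D$ factors uniquely as $\eta=\gamma\eta'$ where $\eta'$ is a finite path with $r(\eta')=s(\gamma)$, and the multiplicative form of the weight in Definition~\ref{def-choice-of-weight-Cuntz-O-D-algebra}(3) gives $w_D^r(\eta) = w_D^r(\gamma)\cdot\prod_i r_i^{\beta_i}$, where the $\beta_i$ record how many of the edges \emph{strictly after} the vertex $s(\gamma)$ have source $v_i$. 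Running the same partition-counting bijection as in Proposition~\ref{prop-abscissa-of-conv-O-2} but over the suffixes $\eta'$ (whose initial vertex $s(\gamma)$ is fixed and hence not counted), the sum over all $\eta'$ of length $q$ of $\prod_i r_i^{\beta_i s}$ equals $\bigl(\sum_i r_i^s\bigr)^q$, so
\[
\sum_{\eta\in F_\gamma\BB_D}\big(w_D^r(\eta)\big)^s = \big(w_D^r(\gamma)\big)^s\sum_{q\ge 0}\Bigl(\sum_i r_i^s\Bigr)^q = \frac{\big(w_D^r(\gamma)\big)^s}{1-\sum_i r_i^s}.
\]
(One must be slightly careful that $F_\gamma(\BB_D)_\circ = F_\gamma\BB_D$ since $\gamma\ne\circ$.)

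Dividing, the factor $1-\sum_i r_i^s$ cancels exactly, leaving $f(s) = \big(w_D^r(\gamma)\big)^s$, and taking $s\downarrow 1$ gives $\mu_D^r([\gamma]) = w_D^r(\gamma)$, as claimed; this also independently recovers $\mu_D^r([\circ])=1$. Finally I would note that since cylinder sets generate the Borel $\sigma$-algebra of $\partial\BB_D$ and the assignment $[\gamma]\mapsto w_D^r(\gamma)$ is finitely additive on them (each $[\gamma]$ decomposes as the disjoint union of $[\gamma e]$ over edges $e$ with $r(e)=s(\gamma)$, and $\sum_{r(e)=s(\gamma)} w_D^r(\gamma e) = w_D^r(\gamma)\sum_i r_i = w_D^r(\gamma)$), the formula determines $\mu_D^r$ on all Borel sets. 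The only genuine obstacle is bookkeeping: making sure the indexing of the $\alpha_i$/$\beta_i$ is consistent between ``$\gamma$ has $n+1$ vertices'' and ``$\eta'$ contributes only its edges after $s(\gamma)$,'' so that no spurious factor of $r_{s(\gamma)}$ is double-counted or dropped; once that is pinned down, the cancellation and the limit are routine.
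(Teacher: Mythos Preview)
Your proof is correct and follows essentially the same route as the paper's: both compute the numerator of the defining ratio as $(w_D^r(\gamma))^s$ times a geometric series in $\sum_i r_i^s$, divide by $\zeta_D^r(s)$, and let $s\downarrow 1$. The only cosmetic difference is that the paper packages the numerator via the factorization $w_D^r(\gamma e \eta') = w_D^r(\gamma)\,w_D^r(\eta')$ together with a bijection between extensions $\gamma\eta$ and arbitrary finite paths $\eta'$, whereas you sum directly over the $q$ appended edges using the multinomial count from Proposition~\ref{prop-abscissa-of-conv-O-2}; your version actually handles the base term $\xi=\gamma$ (the $q=0$ summand) more transparently.
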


\begin{proof}
Note first that for the case $\gamma = \circ$, the result follows immediately from the definitions of $\mu_D^r$ and $w_D^r$.
 Given a cylinder set 
 $[\gamma]\not= \Lambda^{\infty}$
 , we have
 \[
  \mu^r_D ([\gamma]) = \lim_{s \rightarrow s_r^+}  \frac{ \sum_{\eta \, : \, r(\eta) = s(\gamma)} \Big( w_D^r (\gamma \eta)\Big)^s}  {\zeta^r_D (s)}.
 \]
 One remark is in order: if $\gamma$ is a path of length $n$ and $0 < m < n$, then
 \[\begin{split}
 w_D^r(\gamma) & = w_D^r\bigl(r(e_1)\bigr) \prod_{i=1}^n w_D^r\bigl( s(e_i) \bigr) \\
    & = \biggl( w_D^r\bigl(r(e_1)\bigr) \prod_{i=1}^m w_D^r\bigl( s(e_i) \bigr) \biggr) \biggl( w_D^r\bigl(s(e_{m+1})\bigr) \prod_{i=m+2}^n w\bigl( s(e_i) \bigr) \biggr) \\
    & = \biggl( w_D^r\bigl(r(e_1)\bigr) \prod_{i=1}^m w_D^r\bigl( s(e_i) \bigr) \biggr) \biggl( w_D^r\bigl(r(e_{m+2})\bigr) \prod_{i=m+2}^n w_D^r\bigl( s(e_i) \bigr) \biggr) \\
    &{ = w_D^r (e_1e_2 \cdots  e_m) w_D^r(e_{m+2} \cdots e_n)}.
 \end{split}\]
%
 In particular, $w_D^r (\gamma \eta)$ is \textbf{not} $w_D^r(\gamma) w_D^r (\eta)$.
 Indeed, any path of the form $\gamma \eta$ with $s(\gamma) = r(\eta)$ can be written uniquely as $\gamma e\eta'$ where $e$ is the unique edge with $r(e) = s(\gamma)$ and $s(e) = r(\eta')$.
 By the computation above, $w_D^r (\gamma e \eta') = w_D^r (\gamma) w_D^r (\eta')$.
 {Moreover, since $\Lambda_D$ 
 has precisely one edge connecting any pair of vertices, every finite path $\eta'$ in $\Lambda $ gives rise to exactly one $e$ such that $s(e) = r(\eta')$ and $r(e) = s(\gamma)$.}  
 Therefore,
  \[
   \sum_{\eta \, : \,  r(\eta) = s(\gamma)} \bigl( w_D^r (\gamma \eta) \bigr)^s
      = \sum_{\eta' \in \Lambda} \bigl( w_D^r (\gamma) \bigr)^s \Big(  w_D^r (\eta') \Big)^s
      = \bigl( w_D^r (\gamma) \bigr)^s \alpha(s),
 \]
 where $ \alpha(s) = \sum_{\eta' \in \Lambda}  \Big(  w_D^r (\eta') \Big)^s$. 
Moreover, since $ \lim_{s \rightarrow 1^+} \alpha(s) = + \infty$, we have 
  \[
   \mu_D^r ([\gamma]) = \lim_{s \rightarrow 1^+} \Big( w_D^r (\gamma)\Big)^s\  \frac{\alpha(s)}{1+\alpha(s)} = w_D^r(\gamma).
  \]
    
\end{proof}

In particular, we do \emph{not} have $\mu_D^r = \mu_D = M$. 
This should not be completely surprising, however. The Perron--Frobenius measure $M = \mu_D$ is the unique measure on $\Lambda^\infty$ under the following assumptions: the measure is a probability measure, and $\mu_D[\gamma]$ only depends on $|\gamma|$ and $s(\gamma)$. The second assumption is not satisfied for $\mu_D^r$. 

Note also that the choice of weight $w_D^r$ does not define a self-similar ultrametric Cantor set in the sense of \cite[Definition~2.6]{julien-savinien-embedding}, since again, the diameter of $[\gamma]$ does not just depend on $|\gamma|$ and $s(\gamma)$ but also on the specific sequence of edges.

 \subsection{The Laplace--Beltrami Operator}
 \label{subsec-Laplace-Beltrami Operator-O-2}
 As  in Section  \ref{subsec-Laplace-Beltrami Operator-O-D}, the Dixmier trace associated to the spectral triple of Proposition \ref{prop:spectral-uneven} induces the probability measure $\mu_D^r(\tau)$ on the set of choice functions; thus, by the classical theory of Dirichlet forms we can define a Laplace--Beltrami operator $\Delta_s^r$ on $L^2(\partial \BB_D, \mu_D^r)$ as in Proposition~4.1 of \cite{julien-savinien-transversal} by
 \begin{equation}\label{eq:Delta-O-2}
 \langle f, \Delta_s^r(g)\rangle=Q_s(f,g)=\frac{1}{2}\int_E \Tr(\vert \slashed{D}\,\vert^{-s}[\slashed{D}, \pi_\tau(f)]^*\,[{\slashed{D}},\pi_\tau(g)]\, d\mu_D^r(\tau),
 \end{equation}
 where $\Dom Q_s=\Span \{ \chi_\gamma : \gamma\in F\BB_2\}$ is a closable Dirichlet form.

 As before, $\Delta_s^r$ is self-adjoint and has pure point spectrum, and we can  describe the spectrum of $\Delta_s^r$ explicitly. For our case we can additionally  compute the eigenvalues and the eigenfunctions of $\Delta_s^r$ as follows.
 
 \begin{thm}\cite[Theorem~4.3]{julien-savinien-transversal}\label{thm:eigen-O-2}
 Let $\Delta_s^r$ be the Laplace--Beltrami operator on $L^2(\partial \BB_2, \mu_D^r)$ given by \eqref{eq:Delta-O-2}.
 Then the eigenvalues $\{ \lambda_\eta^r \}$ and corresponding eigenspaces $\{E_\eta^r\}$ of $\Delta_s^r$ are  given by, for $\eta\in F\partial \BB_D$,
 \[
  \lambda_\eta^r=\sum_{k=-1}^{|\eta|-1}\frac{1}{G_s(\eta[0,k])}\Big(\mu_D^r[\eta[0,k+1]] - \mu_D^r[\eta[0,k])]\Big)-\frac{\mu_D^r[\eta]}{G_s(\eta)}, 
 \]
 \[
  E_\eta^r = \Span \bigg\{\, \frac{\chi_{[\eta e]}}{\mu_D^r[\eta e]}-\frac{\chi_{[\eta e']}}{\mu_D^r[\eta e']}\; : \; e\ne e',\; |e|=|e'|=1,\; r(e)=r(e')\, \bigg\},
 \]
 where $\eta[0,-1]= \circ$ and $\chi_{[\circ]} = \partial \BB_D$, $G_s(\eta[0,-1])=\frac{1}{2}\sum_{v\ne w\in\Lambda^0}\mu_D^r[v]\, \mu_D^r[w]$, and 
 for $\xi\in F\BB_D$,
 \[
   G_s(\xi)=\frac{1}{2} w_D^r(\xi)^{2-s}\sum_{e\ne e'\in r^{-1}(s(\xi))}\mu_D^r[\xi e]\, \mu_D^r[\xi e'].
 \]
 In addition, $0$ is an eigenvalue for the constant function $1$, and $\lambda_\circ = (G_s(\circ))^{-1}$ is an eigenvalue with eigenspace
 \[
  E_\circ^r = \Span \biggl\{  \frac{\chi_{[v]}}{\mu_D^r[v]} - \frac{\chi_{[v']}}{\mu_D^r[v']} \; : \; v \ne v', \ v, v' \in V_0   \biggr\}.
 \]
 \end{thm}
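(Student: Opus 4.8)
The plan is to obtain this result by specializing \cite[Theorem~4.3]{julien-savinien-transversal} to the weighted Bratteli diagram $(\BB_D, w_D^r)$, in exactly the way that Theorem~\ref{thm:eigen} was obtained in the self-similar case. First I would invoke Proposition~\ref{prop:spectral-uneven} for the existence of the spectral triple, and recall from the Pearson--Bellissard/Julien--Savinien construction that the quadratic form $Q_s$ in \eqref{eq:Delta-O-2} is a closable Dirichlet form whose associated self-adjoint operator $\Delta_s^r$ has pure point spectrum, with eigenvalues and eigenspaces indexed by the tree $F(\BB_D)_\circ$ of finite paths; this part is a citation and needs no new argument. The general theorem expresses each eigenvalue as a telescoping sum, and each eigenspace as a span of normalized differences of characteristic functions of cylinder sets, all in terms of the induced measure and the diameters of finite paths.

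Next I would substitute the concrete data of our setting into those formulas. By Theorem~\ref{thm-Dixmier-trace-uneven-weights} the Dixmier-trace measure satisfies $\mu_D^r([\gamma]) = w_D^r(\gamma)$, and by Definition~\ref{def-choice-of-weight-Cuntz-O-D-algebra} the diameter of $[\gamma]$ equals $w_D^r(\gamma)$ as well; these two identities let me replace every cylinder-set measure and every diameter appearing in the Julien--Savinien formulas by the explicit product $\prod_{i=0}^{|\gamma|} r_{j_i}$. In their notation, $G_s(\xi)$ is the diameter of $[\xi]$ raised to the power $2-s$, times one half the sum over \emph{distinct} pairs of children of $\xi$ of the products of the measures of the corresponding length-one extensions. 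Since $\Lambda_D$ has exactly one edge between each ordered pair of vertices, the children of a path $\xi$ are indexed by the $D$ vertices of $\Lambda_D$; this yields the stated $G_s(\xi) = \frac{1}{2} w_D^r(\xi)^{2-s}\sum_{e\ne e'\in r^{-1}(s(\xi))}\mu_D^r[\xi e]\,\mu_D^r[\xi e']$, and the level-$(-1)$ boundary value $G_s(\circ) = \frac{1}{2}\sum_{v\ne w\in V_0}\mu_D^r[v]\,\mu_D^r[w]$. The eigenspace $E_\eta^r$ is then precisely the span of the differences $\chi_{[\eta e]}/\mu_D^r[\eta e] - \chi_{[\eta e']}/\mu_D^r[\eta e']$ over distinct children $e,e'$ of $\eta$, and $E_\circ^r$ together with the kernel spanned by the constant function $1$ are the level-$(-1)$ instance of the same prescription.

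The one genuinely delicate point, and the step I expect to be the main obstacle, is the bookkeeping of conventions between our Bratteli diagrams and those of \cite{julien-savinien-transversal}, as flagged in Remark~\ref{rmk-root}: their paths carry an extra initial edge from the root $\circ$, so what we call a path of length $n$ is for them a path of length $n+1$, and the telescoping sum defining $\lambda_\eta^r$ must be re-indexed so that $k$ runs from $-1$ (corresponding to the root, with $\eta[0,-1] = \circ$ and $\chi_{[\circ]} = \partial\BB_D$) up to $|\eta|-1$. I would carry this out by writing $\lambda_\eta^r = \sum_{k=-1}^{|\eta|-1} G_s(\eta[0,k])^{-1}\big(\mu_D^r[\eta[0,k+1]] - \mu_D^r[\eta[0,k]]\big) - \mu_D^r[\eta]/G_s(\eta)$, verifying that the $k=-1$ term reproduces the root contribution in Julien--Savinien's formula, and observing that, unlike in Theorem~\ref{thm:eigen}, no further simplification of the sum is available since $w_D^r(\eta[0,k])$ depends on the full string of edges and not merely on $k$ and $s(\eta[0,k])$. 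The identity $\lambda_\circ = G_s(\circ)^{-1}$ then follows as the $\eta = \circ$ case, completing the translation.
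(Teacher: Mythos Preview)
Your approach is essentially the same as the paper's: both derive the result by appealing to \cite[Theorem~4.3]{julien-savinien-transversal} and translating the formulas and indexing conventions. Your write-up is in fact considerably more detailed than the paper's, which consists of a single observation.

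There is, however, one point you gloss over that the paper makes explicit, and it is the only real content of the paper's proof. You frame the argument as a ``specialization'' of \cite[Theorem~4.3]{julien-savinien-transversal}, but that theorem as stated applies only to weights of the type in Definition~\ref{def-choice-of-weight-Cuntz-algebra}, where $w(\gamma)$ depends only on the length and the source of $\gamma$. The weight $w_D^r$ does not satisfy this hypothesis, as you yourself note at the end when you remark that $w_D^r(\eta[0,k])$ depends on the full string of edges. So this is not literally a specialization: the cited theorem does not cover the present case. The paper's proof consists precisely of the assertion that, despite this, a careful examination of the proof of \cite[Theorem~4.3]{julien-savinien-transversal} shows it goes through verbatim for $w_D^r$. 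You should make this observation explicit rather than treating the citation as a black-box application; otherwise your appeal to the theorem is, strictly speaking, outside its stated hypotheses.
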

 
\begin{proof} 
Although Theorem 4.3 of \cite{julien-savinien-transversal} is stated only for the case when the weight function $w(\gamma)$ only depends on the length and the source of the path $\gamma$, as in Definition \ref{def-choice-of-weight-Cuntz-algebra}, a careful examination of the proof of that Theorem will show that the same  proof  
works verbatim in the case of the weight $w_D^r$.
\end{proof}

 \subsection{Eigenvalues and eigenfunctions  for the $\mathcal{O}_2$ case}
 \label{eigenvlaues-wavelets-and-eigenfunctions-O-2}

 We are going to explicitly compute here the eigenvalues 
 for the Laplace--Beltrami operator $\Delta_s^r$ in the  $D=2$ case.  Theorem \ref{thm:eigen-O-2} specializes in the case of $\mathcal{O}_2$ to give

 The formulas in Proposition \ref{eigenvalueO2unequal} below allow us to compute in principle the 
  eigenvalue associated to any finite path. However it seems difficult to get an explicit formula that covers all the cases as the calculations in full generality  are difficult to manage because of challenging bookkeeping.

\begin{lemma}\label{lem-G-s-O-2-term}
  With notation as above, for a finite path $\xi(p,q) \in F(\partial \BB_2)$  having $p$ vertices equal to $v_1$ and $q$ vertices equal to $v_2$  we have we have
 \[
   G_s(\xi(p,q)) = r^{4p+1-ps}(1-r)^{4q+1-qs}
 \]
 More generally, if $\xi$ is any path, one can write
 \[
  G_s (\xi) = \big( \mu_2^r[\xi] \big)^{4-s} r(1-r).
 \]
\end{lemma}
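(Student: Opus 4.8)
The plan is to substitute directly into the formula for $G_s$ recorded in Theorem~\ref{thm:eigen-O-2} and simplify, exploiting two features of the $D = 2$ case: the measure $\mu_2^r$ coincides with the weight $w_2^r$ by Theorem~\ref{thm-Dixmier-trace-uneven-weights}, and $w_2^r$ is multiplicative under appending a single edge.

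First I would record the multiplicativity. If $\xi \in F\BB_2$ and $e$ is an edge with $r(e) = s(\xi)$, then $\xi e$ has the same vertex string as $\xi$ with one extra vertex $s(e)$ appended, so Definition~\ref{def-choice-of-weight-Cuntz-O-D-algebra}(3) gives $w_2^r(\xi e) = w_2^r(\xi)\, r_{s(e)}$, hence $\mu_2^r[\xi e] = \mu_2^r[\xi]\, r_{s(e)}$. Since the vertex matrix $A_2$ has all entries $1$, the set $r^{-1}(s(\xi))$ consists of exactly two edges: one edge $e_1$ with $s(e_1) = v_1$ and one edge $e_2$ with $s(e_2) = v_2$. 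Therefore the sum over ordered pairs $e \ne e'$ in the definition of $G_s$ contains only $(e_1, e_2)$ and $(e_2, e_1)$, and equals $2\,\mu_2^r[\xi e_1]\,\mu_2^r[\xi e_2] = 2\, (\mu_2^r[\xi])^2\, r_{v_1} r_{v_2} = 2\, (w_2^r(\xi))^2\, r(1-r)$, where $r_{v_1} = r$ and $r_{v_2} = 1 - r$.

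Substituting into $G_s(\xi) = \frac{1}{2}\, (w_2^r(\xi))^{2-s} \sum_{e \ne e'} \mu_2^r[\xi e]\, \mu_2^r[\xi e']$, the factor $\frac{1}{2}$ cancels the $2$ and I obtain $G_s(\xi) = (w_2^r(\xi))^{4-s}\, r(1-r) = (\mu_2^r[\xi])^{4-s}\, r(1-r)$, which is the general formula. For the explicit formula I would then specialise: a path $\xi(p,q)$ passing through $p$ copies of $v_1$ and $q$ copies of $v_2$ has, by Definition~\ref{def-choice-of-weight-Cuntz-O-D-algebra}(3), weight $w_2^r(\xi(p,q)) = r^p (1-r)^q$ (note $p + q = |\xi| + 1$), so raising to the power $4-s$ and multiplying by $r(1-r)$ yields $r^{4p + 1 - ps}(1-r)^{4q + 1 - qs}$, as claimed. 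There is no genuine obstacle; the only points requiring care are the edge-versus-vertex bookkeeping (a path of length $n$ traverses $n+1$ vertices, and appending one edge appends exactly one vertex) and keeping the ordered-pair convention in the sum defining $G_s$ consistent with the prefactor $\frac{1}{2}$, exactly as in the proof of Theorem~\ref{thm:eigen}.
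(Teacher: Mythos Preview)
Your proposal is correct and follows essentially the same approach as the paper's proof: both use $\mu_2^r = w_2^r$, the multiplicativity $w_2^r(\xi e) = w_2^r(\xi)\, r_{s(e)}$, and the fact that the sum over ordered pairs $e \ne e'$ has exactly two terms to obtain $G_s(\xi) = (\mu_2^r[\xi])^{4-s} r(1-r)$, then specialise to $\xi(p,q)$. The only cosmetic difference is that the paper also verifies the $\xi(p,q)$ formula by direct substitution into the definition of $G_s$ rather than deducing it from the general formula, but the content is the same.
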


\begin{proof}
 We start with the second point. If $\xi$ is coded by its vertices, $\xi = (v_0, \ldots, v_{|\xi|})$ and $e \neq e'$ are vertices such that $r(e) = r(e') = s(\xi)$, then $w_D^r(\xi e) w_D^r(\xi e') = (w_D^r (\xi) )^2 r(1-r)$. Since $\mu_D^r[\xi] = w_D^r(\xi)$, we have
 \[
  G_s (\xi) = \frac 1 2 \bigl( \mu_D^r [\xi] \bigr)^{2-s} 2 \bigl( \mu_D^r[\xi] \bigr)^2 r(1-r)
 \]
 and the result follows. (Note that the factor $2$ appears because $(v_1, v_2)$ and $(v_2, v_1)$ are the two pairs in the index of the sum defining $G_s(\xi)$.)
 For the first point, we compute
 \[
   G_s (\xi(p,q)) = (1/2) \big[ r^p (1-r)^q \big]^{2-s} 2 \bigl( (r^p (1-r)^q r) (r^p(1-r)^q (1-r)  \bigr).
 \]
\end{proof}

 \begin{prop}\label{eigenvalueO2unequal}
 Let $\Delta_s^r$ be the Laplace--Beltrami operator on $L^2(\partial \BB_2, \mu^r_2)$ given by \eqref{eq:Delta-O-2} for the choice of weight induced by 
 \[
 w_2^r( v_1) = r,\ w_2^r( v_2) = (1-r),
 \]
 where $r \in [0,1]$ is fixed. (Note that the notation used above is slightly different from the notation  we used in Theorem~\ref{thm:eigen-O-2}). Let $\eta\in F\partial \BB_2$ of length $n$  be determined by the string of vertices $(v_0, \ldots, v_n)$; also  we write $(v_0, \ldots, v_k)$ for $\eta[0, k]$,  for any $k \leq n$. 
  Then we have
  \[
   \lambda_\eta^r = \frac{w^r_2(v_0)-1}{r(1-r)} + \sum_{k=0}^{n-1} \frac{(\mu_2^r[v_0, \ldots, v_k])^{s-3}}{r(1-r)} (w^r_2(v_{k+1}) - 1) - \frac{(\mu_2^r[\eta])^{s-3}}{r(1-r)}.
  \]
 \end{prop}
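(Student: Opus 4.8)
The plan is to substitute the explicit formulas from Theorem~\ref{thm-Dixmier-trace-uneven-weights} and Lemma~\ref{lem-G-s-O-2-term} into the general eigenvalue formula of Theorem~\ref{thm:eigen-O-2}, specialized to $D = 2$, and then simplify. Recall that Theorem~\ref{thm:eigen-O-2} expresses
\[
\lambda_\eta^r=\sum_{k=-1}^{n-1}\frac{1}{G_s(\eta[0,k])}\bigl(\mu_2^r[\eta[0,k+1]] - \mu_2^r[\eta[0,k]]\bigr)-\frac{\mu_2^r[\eta]}{G_s(\eta)},
\]
with the conventions $\eta[0,-1] = \circ$, $\mu_2^r[\circ] = 1$, and $G_s(\circ) = \tfrac{1}{2}\sum_{v\ne w\in\Lambda_D^0}\mu_2^r[v]\,\mu_2^r[w]$.

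First I would record the two ingredients. By Theorem~\ref{thm-Dixmier-trace-uneven-weights}, $\mu_2^r[\xi] = w_2^r(\xi)$ for every finite path $\xi$; writing $\eta$ as the string of vertices $(v_0,\ldots,v_n)$, this gives $\mu_2^r[\eta[0,k]] = \prod_{i=0}^k w_2^r(v_i)$ for $0 \le k \le n$, hence the key identity
\[
\mu_2^r[\eta[0,k+1]] = \mu_2^r[\eta[0,k]]\, w_2^r(v_{k+1}), \qquad 0 \le k \le n-1,
\]
together with $\mu_2^r[\eta[0,0]] = w_2^r(v_0)$ and $\mu_2^r[\eta[0,-1]] = 1$. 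By Lemma~\ref{lem-G-s-O-2-term}, $G_s(\xi) = (\mu_2^r[\xi])^{4-s}\, r(1-r)$ for every finite path $\xi$; and for the empty path, since $D = 2$ the sum $\tfrac{1}{2}\sum_{v\ne w}\mu_2^r[v]\mu_2^r[w]$ collapses to $\mu_2^r[v_1]\mu_2^r[v_2] = r(1-r)$, so $G_s(\circ) = r(1-r)$, which is also what the general $G_s$ formula gives at $\xi=\circ$ (where $\mu_2^r[\circ]=1$).

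Then I would split the sum. The $k = -1$ term equals $\dfrac{\mu_2^r[\eta[0,0]] - 1}{G_s(\circ)} = \dfrac{w_2^r(v_0) - 1}{r(1-r)}$, the lone summand in the claimed formula not matching the general pattern. For $0 \le k \le n-1$, the identity above gives $\mu_2^r[\eta[0,k+1]] - \mu_2^r[\eta[0,k]] = \mu_2^r[\eta[0,k]]\bigl(w_2^r(v_{k+1}) - 1\bigr)$, and dividing by $G_s(\eta[0,k]) = (\mu_2^r[\eta[0,k]])^{4-s}r(1-r)$ produces exactly $\dfrac{(\mu_2^r[v_0,\ldots,v_k])^{s-3}}{r(1-r)}\bigl(w_2^r(v_{k+1}) - 1\bigr)$. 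Finally, the subtracted term is $\dfrac{\mu_2^r[\eta]}{(\mu_2^r[\eta])^{4-s}r(1-r)} = \dfrac{(\mu_2^r[\eta])^{s-3}}{r(1-r)}$. Assembling the three pieces yields the asserted formula for $\lambda_\eta^r$.

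There is no genuine obstacle here: the argument is a direct substitution followed by routine algebra. The only point demanding care is the bookkeeping around the ``length $-1$'' empty path $\circ$ --- ensuring the conventions $\mu_2^r[\circ] = 1$ and $G_s(\circ) = r(1-r)$ are applied consistently, and checking that $G_s(\circ)$ really does agree with the $D=2$ specialization of the general $G_s$ formula of Lemma~\ref{lem-G-s-O-2-term}, since the $k=-1$ term is precisely where the single $\tfrac{w_2^r(v_0)-1}{r(1-r)}$ summand comes from.
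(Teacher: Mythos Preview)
Your proposal is correct and follows essentially the same route as the paper: both start from the eigenvalue formula of Theorem~\ref{thm:eigen-O-2}, factor $\mu_2^r[\eta[0,k+1]] - \mu_2^r[\eta[0,k]] = \mu_2^r[\eta[0,k]]\bigl(w_2^r(v_{k+1})-1\bigr)$ using the multiplicativity of $\mu_2^r$, substitute $G_s(\xi) = (\mu_2^r[\xi])^{4-s}r(1-r)$ from Lemma~\ref{lem-G-s-O-2-term} together with $G_s(\circ) = r(1-r)$, and simplify term by term. Your treatment of the $k=-1$ term and your check that $G_s(\circ)$ is consistent with the general formula are exactly the bookkeeping the paper carries out.
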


 \begin{proof}
 We will use the fact that if one codes $\eta$ by its vertices $\eta=(v_0, \ldots, v_{|\eta|})$, then $\mu_2^r[v_0, \ldots, v_k] = \mu_2^r[v_0, \ldots, v_i]\mu_2^r[v_{i+1}, \ldots, v_k]$, as was established in the proof of Theorem~\ref{thm-Dixmier-trace-uneven-weights}.
 Consequently, we can factor the term $(\mu_2^r [\eta[0,k+1]] - \mu_2^r [\eta[0, k]])$ as follows:
 \[\begin{split}
  \mu_2^r [\eta[0,k+1]] - \mu_2^r [\eta[0, k]] & = \mu_2^r[v_0, \ldots, v_{k+1}] - \mu_2^r[v_0, \ldots, v_{k}] \\
     & = \mu_2^r[v_0, \ldots, v_{k+1}] (\mu_2^r[v_{k}] - 1).
 \end{split}\]
 We therefore compute
  \[
    \lambda_\eta^r = \sum_{k=-1}^{|\eta|-1}\frac{1}{G_s(\eta[0,k])}  \Big( \mu^r_2[\eta[0,k+1]] - \mu^r_2[\eta[0,k]] \Big)
       - \frac{\mu^r_2[\eta]}{G_s(\eta)}, 
    \]
    that is (using point~2 of Lemma~\ref{lem-G-s-O-2-term})
  \begin{align*}
   \lambda_\eta^r &= \frac{ \mu_2^r[v_0] - 1}{G_s(\circ)} \\
     & + \sum_{k=0}^{n-1} \frac{1}{r(1-r) (\mu_2^r[v_0, \ldots, v_k])^{4-s}} \mu_2^r[v_0, \ldots, v_k] (w_2^r(v_{k+1}) - 1) \\
     & - \frac{\mu_2^r[\eta]}{r(1-r) (\mu_2^r [\eta])^{4-s} }.
  \end{align*}
  The result follows from algebraic simplifications. Note in particular that ${G_s(\circ)} = r (1-r) $.
 \end{proof}
  
One can also construct representations and wavelet spaces of $\mathcal{O}_2$ associated to the weighted Bratteli diagram $(\partial \BB_2, w_2^r)$; 
  see Theorem 3.8 of \cite{FGKP-survey}.  This is the analogue of Theorem 5.1 above for the uneven weight case. 
 We now compute the eigenspaces corresponding to the eigenvalues of Proposition \ref{eigenvalueO2unequal} above, and show that they coincide with the wavelet spaces described in \cite{FGKP-survey} Theorem 3.8. 
 In other words, we will show that if $\tilde{\mathcal{W}}_k$ are the orthogonal subspaces of $L^2(\partial \BB_2, w_2^r)$ described in 
Theorem 3.8 of \cite{FGKP-survey}, 
\[\tilde{\mathcal{W}}_k = \bigoplus_{\eta: |\eta| = k} E^r_\eta.\]

 What is done below is similar to the result of Theorem \ref{equalweightswaveletOD}, but we allow unequal weights in what follows.
 
 By evaluating the formulas given in 
 Theorem \ref{thm:eigen-O-2} we obtain: 
 
 \begin{prop}
 \label{eigenspaceO2unequal}
  Let $\Delta_s^r$ be the Laplace--Beltrami operator on $L^2(\partial \BB_2, \mu^r_2)$ given by \eqref{eq:Delta-O-2} for the choice of weight induced by the choice on the vertices $v_1$ and $v_2$ of the associated graph as 
  \[
  w_2^r( v_1)  = r_1, \ w_2^r( v_2) = r_2 = 1-r_1,
  \]
  where $r =r_1 \in [0,1]$ is fixed.  
  If we let $\circ$ denote the empty path, then the  eigenspace $E_\circ^r$ with   eigenvalue $\lambda_\circ^r$ 
  is given by 
  $\Lambda_2^{\infty}$, hence has dimension $1$ and  
  \[
    E_\circ^r = \Span \Bigl\{ \, \frac{\chi_{[v_1]}}{\mu^r_2([v_1])}-\frac{\chi_{[v_2]}}{\mu^r_2([v_2])}\, \Bigr\} = \Span \left\{ \frac{\chi_{[v_1]}}{r} - \frac{\chi_{[v_2]}}{1-r} \right\}.
  \]
  Given a   finite non-empty  path $\eta = v_{j_0} v_{j_1} \ldots v_{j_n}\in F\partial \BB_2$ with $n+1$  vertices, where $j_i \in \{1,2\} \ \forall \ i$,  the eigenspace $E_{\eta}^r$ with  corresponding eigenvalue $\lambda_{\eta}^r$ described in Proposition \ref{eigenvalueO2unequal} is given by
 \begin{align*}
   E_{\eta}^r & = \Span \Bigl\{ \, \frac{\chi_{[{\eta} e]}}{\mu^r_2[\eta e]}-\frac{\chi_{[\eta e']}}{\mu^r_2[\eta e']}\; : \; e\ne e',\; |e|=|e'|=1,\; r(e)=r(e') = s(\eta)\, \Bigr\}\\
   &= \Span \left\{ \frac{1}{(\prod_{i=0}^n r_{j_i}) r} \chi_{[v_{j_0} v_{j_1} \ldots v_{j_n} v_1]} - \frac{1}{(\prod_{i=0}^n r_{j_i}) (1-r)} \chi_{[v_{j_0} v_{j_1} \ldots v_{j_n} v_2]} \right\}.
 \end{align*}
 \end{prop}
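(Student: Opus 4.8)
The plan is to obtain this proposition as a direct specialization of Theorem~\ref{thm:eigen-O-2} to the case $D=2$, combined with the identity $\mu_2^r[\gamma] = w_2^r(\gamma)$ from Theorem~\ref{thm-Dixmier-trace-uneven-weights}. The first step is to record that the graph $\Lambda_2$ has exactly two vertices $v_1, v_2$, and — since the vertex matrix $A_2$ is the $2\times 2$ matrix of all ones — exactly two edges with any prescribed range, one with source $v_1$ and one with source $v_2$. Consequently, in the description of $E_\circ^r$ from Theorem~\ref{thm:eigen-O-2} the only pairs $v\ne v'$ in $V_0$ are $(v_1,v_2)$ and $(v_2,v_1)$, which span the same one-dimensional space; likewise, for a non-empty finite path $\eta$ the only pairs of distinct edges $e\ne e'$ with $r(e)=r(e')=s(\eta)$ are the edge landing in $v_1$ and the edge landing in $v_2$ (in either order), so $E_\eta^r$ is also one-dimensional, spanned by $\frac{\chi_{[\eta v_1]}}{\mu_2^r[\eta v_1]}-\frac{\chi_{[\eta v_2]}}{\mu_2^r[\eta v_2]}$, where $[\eta v_\ell]$ denotes the cylinder set of the path obtained from $\eta$ by appending the unique edge with range $s(\eta)$ and source $v_\ell$.

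It then remains to evaluate the normalizing measures. By Theorem~\ref{thm-Dixmier-trace-uneven-weights} we have $\mu_2^r[\gamma]=w_2^r(\gamma)$ for every finite path $\gamma$, so part~(2) of Definition~\ref{def-choice-of-weight-Cuntz-O-D-algebra} gives $\mu_2^r[v_1]=r_1=r$ and $\mu_2^r[v_2]=r_2=1-r$, which yields the stated formula for $E_\circ^r$. For a non-empty path $\eta$ coded by its string of vertices $v_{j_0}v_{j_1}\cdots v_{j_n}$, part~(3) of the same definition gives $w_2^r(\eta)=\prod_{i=0}^n r_{j_i}$, and appending the vertex $v_\ell$ multiplies the weight by the additional factor $r_\ell$ (this is the multiplicativity of $w_2^r$ under concatenation of vertex-strings, already used in the proof of Theorem~\ref{thm-Dixmier-trace-uneven-weights}), so $\mu_2^r[\eta v_\ell]=w_2^r(\eta v_\ell)=\bigl(\prod_{i=0}^n r_{j_i}\bigr)r_\ell$. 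Substituting $\ell=1$ and $\ell=2$ into the spanning vector above, and using $r_1=r$, $r_2=1-r$, produces exactly the claimed description of $E_\eta^r$.

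There is no substantive obstacle here: the argument is a bookkeeping exercise, and the only point requiring care is the translation between edge and vertex conventions — remembering that a path of length $n$ in the sense of this paper is coded by $n+1$ vertices and $n$ edges, that $[\eta v_\ell]$ must be interpreted via the unique edge connecting the relevant pair of vertices (guaranteed by the $\{0,1\}$ vertex matrix), and that the weight in Definition~\ref{def-choice-of-weight-Cuntz-O-D-algebra} is a product over all vertices traversed. Once these conventions are lined up, the identities for $E_\circ^r$ and $E_\eta^r$ follow by inspection.
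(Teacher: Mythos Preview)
Your proposal is correct and follows exactly the approach indicated in the paper, which simply states that the proposition is obtained ``by evaluating the formulas given in Theorem~\ref{thm:eigen-O-2}.'' You have carefully filled in those details---specializing to $D=2$, invoking $\mu_2^r[\gamma]=w_2^r(\gamma)$ from Theorem~\ref{thm-Dixmier-trace-uneven-weights}, and computing the weights via Definition~\ref{def-choice-of-weight-Cuntz-O-D-algebra}---which is precisely what the paper leaves implicit.
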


We now show how the scaling functions generating ${\mathcal V}_0$ in Theorem 3.8 of \cite{FGKP-survey} fit into the eigenspace picture described above.

\begin{lemma} Let $r \in [0,1]$ be given, and let $\mu^r_2$ be the Markov probability measure on the infinite path space  $\Lambda_2^{\infty}$ corresponding to the weight assigning $r$ to the vertex $v_1$ and $1-r$ to the vertex $v_2.$  Let ${\mathcal V}_{-1}$ denote the space of constant functions on $\Lambda_2^{\infty}.$  Then the scaling space 
$\tilde{\mathcal V}_0$ described in Theorem 3.8 of \cite{FGKP-survey} as the span of $\{\chi_{[v_1]},\;\chi_{[v_2]}\},$ the characteristic functions of cylinder sets corresponding to the vertices, can be written as
$$\tilde{\mathcal V_0}\;=\;{\mathcal V}_{-1}\oplus E_{\circ}^r,$$ where $E_{\circ}^r$ is the eigenspace corresponding to the empty path.
\end{lemma}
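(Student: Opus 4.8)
The plan is to exhibit each summand explicitly as a subspace of $\tilde{\mathcal V}_0$, verify that the two summands are mutually orthogonal in $L^2(\partial \BB_2, \mu_2^r)$, and then conclude by a dimension count.

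First I would record two elementary inclusions. Every infinite path in $\Lambda_2^\infty$ has range either $v_1$ or $v_2$, so $\chi_{[v_1]} + \chi_{[v_2]} = 1$ pointwise; hence the constant function $1$ lies in $\tilde{\mathcal V}_0 = \Span\{\chi_{[v_1]},\chi_{[v_2]}\}$, i.e.\ $\mathcal V_{-1} \subseteq \tilde{\mathcal V}_0$. By Proposition~\ref{eigenspaceO2unequal}, $E_\circ^r$ is spanned by $\dfrac{\chi_{[v_1]}}{\mu_2^r[v_1]} - \dfrac{\chi_{[v_2]}}{\mu_2^r[v_2]}$, which also lies in $\tilde{\mathcal V}_0$; hence $\mathcal V_{-1} + E_\circ^r \subseteq \tilde{\mathcal V}_0$.

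Next I would check orthogonality. Theorem~\ref{thm-Dixmier-trace-uneven-weights} gives $\mu_2^r[v_1] = w_2^r(v_1) = r$ and $\mu_2^r[v_2] = w_2^r(v_2) = 1 - r$, so
\[
\Big\langle 1,\ \frac{\chi_{[v_1]}}{r} - \frac{\chi_{[v_2]}}{1-r} \Big\rangle = \frac{\mu_2^r[v_1]}{r} - \frac{\mu_2^r[v_2]}{1-r} = 1 - 1 = 0,
\]
where the inner product is that of $L^2(\partial \BB_2,\mu_2^r)$ (and we assume $0 < r < 1$, so that the generator of $E_\circ^r$ is well defined; for $r \in \{0,1\}$ the measure degenerates and there is nothing to prove). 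Thus $\mathcal V_{-1} \perp E_\circ^r$, so the sum is direct and $\dim(\mathcal V_{-1} \oplus E_\circ^r) = 2$. Since $[v_1]$ and $[v_2]$ are disjoint sets of positive measure, $\chi_{[v_1]}$ and $\chi_{[v_2]}$ are linearly independent, so $\dim \tilde{\mathcal V}_0 = 2$ as well; the inclusion $\mathcal V_{-1} \oplus E_\circ^r \subseteq \tilde{\mathcal V}_0$ is therefore an equality, which is the claim.

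The argument is short, and the only genuinely load-bearing step is the orthogonality computation: it works precisely because Theorem~\ref{thm-Dixmier-trace-uneven-weights} identifies the measures of the vertex cylinder sets with the weights $r$ and $1-r$ used to normalize the generator of $E_\circ^r$. Everything else is bookkeeping with characteristic functions and counting dimensions, so I do not anticipate any real obstacle.
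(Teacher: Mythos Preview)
Your proof is correct and follows essentially the same approach as the paper: both subspaces are one-dimensional, contained in the two-dimensional space $\tilde{\mathcal V}_0$, and mutually orthogonal, so a dimension count finishes the argument. You spell out the inclusions and the orthogonality computation a bit more explicitly than the paper does, and you handle the degenerate endpoints $r \in \{0,1\}$, but the underlying idea is identical.
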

\begin{proof}
We note that $\tilde{\mathcal V_0}$, being generated by the orthogonal functions $\chi_{[v_1]}$ and $\chi_{[v_2]}$,  has dimension $2.$ On the other hand, 
 the space ${\mathcal V}_{-1}$ of constant functions on $\Lambda_2^{\infty}$ has dimension $1$ and  
\[
 E_\circ^r = \Span \Bigl\{ \, \frac{\chi_{[v_1]}}{\mu^r_2([v_1])}-\frac{\chi_{[v_2]}}{\mu^r_2([v_2])}\, \Bigr\}
\]
also has dimension $1$ and is orthogonal to ${\mathcal V}_{-1}.$   It follows by a dimension count that 
$${\mathcal V_0}\;=\;{\mathcal V}_{-1}\oplus E_{\circ}^r,$$
as desired.
\end{proof}

\begin{prop} 
\label{zerowaveletlemma}
 Let $\mu^r_2$ be the Markov probability measure on the infinite path space  $\Lambda_2^{\infty}$ corresponding to the weight assigning $r$ to the vertex $v_1$ and $1-r$ to the vertex $v_2.$ Then for the corresponding representation of ${\mathcal O}_2$ on $L^2(\Lambda_2^{\infty},\mu^r_2)$ defined in Theorem 3.8 of \cite{FGKP-survey}, we have 
\[
\tilde{\mathcal W}_0= \Span_{\eta: |\eta|=0}\{E_\eta^r\},
\]
 where $E_\eta^r$ are the eigenspaces of the Laplace-Beltrami operator defined in Proposition \ref{eigenspaceO2unequal}.
\end{prop}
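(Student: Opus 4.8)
The plan is to realize both $\tilde{\mathcal W}_0$ and $\Span_{\eta\,:\,|\eta|=0}\{E_\eta^r\}$ as the orthogonal complement, inside the span of the length‑$2$ cylinder functions, of the scaling space $\tilde{\mathcal V}_0 = \Span\{\chi_{[v_1]},\chi_{[v_2]}\}$ of Theorem~3.8 of \cite{FGKP-survey}. Set $\tilde{\mathcal V}_1 := \Span\{\chi_{[v_i v_j]} : i,j\in\{1,2\}\}\subseteq L^2(\Lambda_2^\infty,\mu_2^r)$. Since $A_2$ has all entries equal to $1$, the four cylinders $[v_iv_j]$ are non‑empty and pairwise disjoint, so $\dim\tilde{\mathcal V}_1 = 4$; likewise $\dim\tilde{\mathcal V}_0 = 2$, and $\tilde{\mathcal V}_0\subseteq\tilde{\mathcal V}_1$ because $\chi_{[v_j]} = \chi_{[v_jv_1]}+\chi_{[v_jv_2]}$. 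The level‑$0$ wavelet space $\tilde{\mathcal W}_0$ of Theorem~3.8 of \cite{FGKP-survey} is, by construction, the orthogonal complement of $\tilde{\mathcal V}_0$ in $\tilde{\mathcal V}_1$ (this is the analogue in the uneven‑weight setting of the relation $\mathcal V_1 = \mathcal V_0\oplus\mathcal W_0$ used in Section~\ref{wavelets-and-eigenfunctions-O-D}), so $\dim\tilde{\mathcal W}_0 = 2$.

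First I would observe that the only finite paths $\eta$ of length $0$ are the two vertices $v_1,v_2$, so $\Span_{\eta\,:\,|\eta|=0}\{E_\eta^r\} = E_{v_1}^r + E_{v_2}^r$. By Proposition~\ref{eigenspaceO2unequal} each $E_{v_j}^r$ is one‑dimensional, spanned by $\psi_j := \frac{\chi_{[v_jv_1]}}{\mu_2^r[v_jv_1]} - \frac{\chi_{[v_jv_2]}}{\mu_2^r[v_jv_2]}$, which lies in $\tilde{\mathcal V}_1$. Since $\psi_1$ is supported on $[v_1]$, $\psi_2$ on $[v_2]$, and these cylinders are disjoint, $E_{v_1}^r\perp E_{v_2}^r$, so $E_{v_1}^r\oplus E_{v_2}^r$ is a genuinely two‑dimensional subspace of $\tilde{\mathcal V}_1$.

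Next I would verify $E_{v_1}^r\oplus E_{v_2}^r\perp\tilde{\mathcal V}_0$: for $j\neq k$ one has $[v_jv_\ell]\cap[v_k]=\emptyset$ for $\ell\in\{1,2\}$, hence $\langle\psi_j,\chi_{[v_k]}\rangle = 0$; and for $j=k$, $[v_jv_\ell]\subseteq[v_j]$ gives $\langle\chi_{[v_jv_\ell]},\chi_{[v_j]}\rangle = \mu_2^r[v_jv_\ell]$, so $\langle\psi_j,\chi_{[v_j]}\rangle = 1-1 = 0$. Comparing dimensions, the two‑dimensional space $E_{v_1}^r\oplus E_{v_2}^r$ must then coincide with the orthogonal complement of the two‑dimensional space $\tilde{\mathcal V}_0$ inside the four‑dimensional space $\tilde{\mathcal V}_1$, i.e.\ with $\tilde{\mathcal W}_0$; this gives the claim. (Equivalently, one checks directly that $\psi_j$ and $\chi_{[v_j]}$ span $\Span\{\chi_{[v_jv_1]},\chi_{[v_jv_2]}\}$, whence $\tilde{\mathcal V}_0\oplus E_{v_1}^r\oplus E_{v_2}^r = \tilde{\mathcal V}_1$, and combine with the orthogonality just proved.)

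All the computations involved are elementary inner products of characteristic functions of cylinder sets, so I do not anticipate any substantial obstacle; the only point requiring care is to confirm that the level‑$0$ wavelet space $\tilde{\mathcal W}_0$ of \cite{FGKP-survey} is indeed the orthogonal complement of $\Span\{\chi_{[v_1]},\chi_{[v_2]}\}$ in the span of the length‑$2$ cylinder functions — i.e.\ to match the indexing conventions of that paper with the ones used here.
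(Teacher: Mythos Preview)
Your argument is correct, but it takes a different route from the paper's. The paper proceeds by explicit computation: it writes down the inner product $\langle (x_j),(y_j)\rangle = \sum_j \overline{x_j}\,y_j\,r_j$ on $\C^2$, finds the vector $c^{1,k}=\sqrt{r(1-r)}\bigl(\tfrac{1}{r},-\tfrac{1}{1-r}\bigr)$ orthogonal to $(1,1)$, and then computes the wavelet $\psi_{1,k}$ from Theorem~3.8 of \cite{FGKP-survey} directly, obtaining
\[
 \psi_{1,k} = \sqrt{r(1-r)r_k}\Bigl(\frac{\chi_{[v_kv_1]}}{\mu_2^r[v_kv_1]} - \frac{\chi_{[v_kv_2]}}{\mu_2^r[v_kv_2]}\Bigr),
\]
which is visibly a scalar multiple of the generator of $E_{v_k}^r$. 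Your approach instead characterises both $\tilde{\mathcal W}_0$ and $E_{v_1}^r\oplus E_{v_2}^r$ abstractly as $\tilde{\mathcal V}_1\ominus\tilde{\mathcal V}_0$ and matches them by a dimension count. The paper's method is more concrete --- it identifies the precise scalar relating the wavelet to the eigenfunction, and it never needs to invoke the description of $\tilde{\mathcal W}_0$ as an orthogonal complement (this is exactly the ``point requiring care'' you flag at the end). Your method is cleaner and avoids computing $c^{1,k}$ explicitly, and it makes clearer why the result should hold; but it does leave that one step --- identifying $\tilde{\mathcal W}_0$ with $\tilde{\mathcal V}_1\ominus\tilde{\mathcal V}_0$ from the definition in \cite{FGKP-survey} --- to be filled in, whereas the paper's explicit computation sidesteps it entirely.
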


\begin{proof}
As in Theorem 3.8 of \cite{FGKP-survey} and Section \ref{wavelets-and-eigenfunctions-O-D} above, 
we have an inner product on $\mathbb C^2$
defined by 
$$\langle (x_j),(y_j)\rangle =\sum_{j=1}^2\overline{x_j}\cdot y_j\cdot r_j,$$ and a fixed vector 
$c^0 = c^{0,k}=(1,1).$ For $k=1,2,$ we find an orthonormal basis for $\{c^{0,k}\}^{\perp}$ denoted by $\{c^{1,k}\},$ where $
 c^{1,k}=(c_{\ell}^{1,k})_{\ell\in\{1,2\}}.$ 
 
But here, a straightfoward calculation shows that we can take 
\[
 c^{1,k}=\sqrt{r(1-r)} \Bigl( \frac{1}{r}, -\frac{1}{1-r} \Bigr),\;k=1,2.
\]
Therefore the wavelet $\psi_{1,k}$ of Theorem 3.8 of \cite{FGKP-survey} is given by
\[\begin{split}
  \psi_{1,k} & = \frac{\sqrt{r(1-r)}}{\sqrt{r_k}} \biggl[ \frac{\chi_{[v_kv_1]}}{r}-\frac{\chi_{[v_kv_2]}}{1-r} \biggr] \\
             & = \sqrt{r(1-r)r_k} \biggl[ \frac{\chi_{[v_kv_1]}}{\mu^r_2([v_kv_1])}-\frac{\chi_{[v_kv_2]}}{\mu^r_2([v_kv_2])} \biggr].
\end{split}\]
Recall that 
\[E_{v_k}^r = \Span \left \{ \frac{1}{ \mu_2^r([v_k v_1]) } \chi_{[v_k v_1]} - \frac{1}{\mu_2^r([v_k v_2])} \chi_{[v_k v_2]} \right\}\]
is a one-dimensional subspace of $L^2(\partial \BB_2, \mu_2^r)$.  Moreover, 
each vector $\psi_{1,k}$ is evidently a scalar multiple of the single spanning vector
 from $E_\eta^r$ for $\eta = v_k$ a path of length $0.$  Taking the span of the two vectors from $E_{v_1}^r$ and $E_{v_2}^r$ 
gives exactly the span of the $\psi_{1,k}$ for $k=1,2;$ since $\tilde{\mathcal W_0}$ is defined to be the span of the vectors $\psi_{1,k}$, the result follows.
\end{proof}
We now relate higher dimensional wavelet subspaces to the corresponding eigenspaces for the Laplacian:

\begin{lemma}
Let $\mu^r_2$ be the Markov probability measure on the infinite path space  $\Lambda_2^{\infty}$ corresponding to the weight assigning $r$ to the vertex $v_1$ and $1-r$ to the vertex $v_2.$ Then for the corresponding representation of ${\mathcal O}_2$ on $L^2(\Lambda_2^{\infty},\mu^r_2)$ defined in Theorem 3.8 of \cite{FGKP-survey}, we have 
\[
\tilde{{\mathcal W}}_k=\Span_{\eta: |\eta|=k}\{E_\eta^r\},
\]
where $E_\eta^r$ are the eigenspaces of the Laplacian defined in Proposition \ref{eigenspaceO2unequal}.
\end{lemma}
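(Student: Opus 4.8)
This is the level-$k$ version of Proposition~\ref{zerowaveletlemma} (the case $k=0$) and is the uneven-weight counterpart of Theorem~\ref{equalweightswaveletOD}, so the plan is to run the same argument. First I would recall from Theorem~3.8 of \cite{FGKP-survey} that, by construction, $\tilde{\mathcal{W}}_k = \Span\{\, S_w\psi_{1,k'} : w\in(\Z_2)^k,\ k'\in\{1,2\}\,\}$, where the $\psi_{1,k'}$ are the level-$0$ wavelet generators and $S_w = S_{w_1}\cdots S_{w_k}$ for the isometries of the representation of $\mathcal{O}_2$ on $L^2(\Lambda_2^\infty,\mu^r_2)$; these are the operators attached by Proposition~\ref{MPrepprop} to the semibranching function system $\{\sigma_i\}$ of Example~\ref{ex:inf-path-repn} on the measure space $(\Lambda_2^\infty,\mu^r_2)$. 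By the proof of Proposition~\ref{zerowaveletlemma}, each $\psi_{1,k'}$ is a nonzero scalar multiple of $\tfrac{\chi_{[v_{k'}v_1]}}{\mu^r_2[v_{k'}v_1]}-\tfrac{\chi_{[v_{k'}v_2]}}{\mu^r_2[v_{k'}v_2]}$, i.e.\ of the (unique up to scalar) spanning vector of $E^r_{v_{k'}}$. So the content to be verified is that $S_w$ carries this level-$0$ generator, up to a nonzero scalar, to the spanning vector of $E^r_\eta$ for the path $\eta = wv_{k'}$ of length $k$, and that every length-$k$ path arises this way.

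The computational heart is the action of $S_w$ on characteristic functions of cylinders. Since $\mu^r_2[i\gamma] = r_i\,\mu^r_2[\gamma]$ for every finite path $\gamma$ — this is the factorization established in the proof of Theorem~\ref{thm-Dixmier-trace-uneven-weights} — the measure $\mu^r_2\circ\sigma_i$ equals $r_i\,\mu^r_2$, so its Radon--Nikodym derivative is the constant $r_i$; Proposition~\ref{MPrepprop} then gives $S_i\chi_{[\gamma]} = r_i^{-1/2}\chi_{[i\gamma]}$ (every cylinder $[i\gamma]$ is nonempty because $A_2$ has all entries $1$). Iterating, $S_w\chi_{[\gamma]} = \bigl(\prod_{m=1}^k r_{w_m}\bigr)^{-1/2}\chi_{[w\gamma]}$ for a word $w=w_1\cdots w_k$, and using $\mu^r_2[w\gamma] = \bigl(\prod_m r_{w_m}\bigr)\mu^r_2[\gamma]$ once more, a one-line computation gives
\[
 S_w\left(\frac{\chi_{[v_{k'}v_1]}}{\mu^r_2[v_{k'}v_1]}-\frac{\chi_{[v_{k'}v_2]}}{\mu^r_2[v_{k'}v_2]}\right) = \Bigl(\prod_{m=1}^k r_{w_m}\Bigr)^{1/2}\left(\frac{\chi_{[wv_{k'}v_1]}}{\mu^r_2[wv_{k'}v_1]}-\frac{\chi_{[wv_{k'}v_2]}}{\mu^r_2[wv_{k'}v_2]}\right),
\]
whose right-hand side is, by Proposition~\ref{eigenspaceO2unequal}, a nonzero scalar multiple of the spanning vector of $E^r_{wv_{k'}}$. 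Hence $S_w\psi_{1,k'}$ spans $E^r_{wv_{k'}}$ (this identity also records that $S_{w_1}\cdots S_{w_k}$ maps $E^r_{v_{k'}}$ onto $E^r_{wv_{k'}}$, the scaling/translation structure).

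To finish, I would observe that $(w,k')\mapsto wv_{k'}$ is a bijection from $(\Z_2)^k\times\{1,2\}$ onto the set of paths of length $k$ in $\BB_2$: a word of length $k$ together with one further vertex is a string of $k+1$ vertices, and since $A_2$ is all $1$'s every such string is a legitimate path. As each $E^r_\eta$ is one-dimensional (Theorem~\ref{thm:eigen-O-2}, Proposition~\ref{eigenspaceO2unequal}), it follows that $\tilde{\mathcal{W}}_k$ is exactly the span of a nonzero vector chosen from each $E^r_\eta$ with $|\eta|=k$; because those $E^r_\eta$ are supported on the mutually disjoint cylinders $[\eta]$, this span is the (orthogonal) direct sum, so $\tilde{\mathcal{W}}_k=\bigoplus_{|\eta|=k}E^r_\eta$. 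I expect the only genuine friction to be bookkeeping: reconciling the length conventions and ``$\psi$'' notation of Theorem~3.8 of \cite{FGKP-survey} with the Bratteli-diagram path lengths used here and in Proposition~\ref{eigenspaceO2unequal}, and carrying the non-constant Radon--Nikodym factors $r_i$ (as opposed to the uniform $1/D$ of Theorem~\ref{equalweightswaveletOD}) correctly through the iterated isometry $S_w$.
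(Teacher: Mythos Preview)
Your proposal is correct and follows essentially the same approach as the paper: both hinge on the identity $S_i\chi_{[\gamma]} = r_i^{-1/2}\chi_{[i\gamma]}$ together with the measure factorization $\mu_2^r[i\gamma] = r_i\,\mu_2^r[\gamma]$ to show that applying an isometry $S_i$ to a spanning vector of $E_\eta^r$ yields a nonzero scalar multiple of the spanning vector of $E_{v_i\eta}^r$. The only cosmetic difference is that the paper packages this as an induction on $k$ (passing from $\tilde{\mathcal W}_n$ to $\tilde{\mathcal W}_{n+1}$ via $S_0$ and $S_1$ separately), whereas you unwind the induction and compute $S_w\psi_{1,k'}$ in one shot for an arbitrary word $w$ of length $k$; the content is identical.
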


\begin{proof}
We prove the result by induction.  We have proved the result for $k=0$ directly.  We now suppose that for $k=n$ we have shown 
\[
 \tilde{\mathcal W}_n = \Span_{\eta: |\eta|=n} \bigl\{ E_\eta^r \bigr\},
\]
where, as defined in Theorem 3.8 of \cite{FGKP-survey}, 
\[
 \tilde{\mathcal W}_n\;=\; \Span \bigl\{ S_w(\psi_{1,k}):\;k=1,2,\;w \text{ is a word of length } n \bigr\},
\]
for $\psi_{1,1}$ and $\psi_{1,2}$ the wavelets of Lemma \ref{zerowaveletlemma}, and for $w=w_1w_2\cdots w_n$ a word of length $n,$ where $w_i\in\mathbb Z_2,$
$S_w=S_{w_1}S_{w_2}\cdots S_{w_n}$, where  (writing an infinite path $x$ as a sequence of vertices)
\[
 S_0 f(x) = \begin{cases}  r^{-1/2} f(u_2 u_3 \ldots )  & \text{if } x = (v_1 u_2 u_3 \ldots), \\
          0 & \text{else.}
          \end{cases}
          \]
and \[
 S_1 f(x) = \begin{cases}  (1-r)^{-1/2} f(u_2 u_3 \ldots )  & \text{if } x = (v_2 u_2 u_3  \ldots), \\
          0 & \text{else.}
          \end{cases}
\]
From this and the induction hypothesis, it follows that 
\[\begin{split}
 \tilde{\mathcal W}_{n+1} & = \Span \bigl\{ S_0(\tilde{\mathcal W_n}), S_1(\tilde{\mathcal W_n}) \bigr\}  \\
                    & = \Span_{\eta: |\eta|=n} \bigl\{ S_0(E_\eta^r), S_1(E_\eta^r) \bigr\},
\end{split}\]
where a typical element of $E_\eta^r$ looks like 
$$\frac{\chi_{[\eta e]}}{\mu^r_2([\eta e])}-\frac{\chi_{\eta e'}}{\mu^r_2([\eta e'])}\;.$$
Now if  $\eta = u_0 u_1 \cdots u_n$ is a path of length $n$ whose $n+1$ vertices are given in order by $u_0 u_1u_2\cdots u_{n},$
we compute directly that 
\[
 S_0\chi_{[\eta]} \;=\; \frac{1}{\sqrt{r}}\chi_{[v_1\eta]},
\]
and
\[
 S_1\chi_{[\eta']} \;=\; \frac{1}{\sqrt{1-r}}\chi_{[v_2\eta']}.
\]
Therefore we can write, for $\eta$ of length $n$ and $e$ and $e'$ of  
length $1$ with $e\not=e',$
\[\begin{split}
 S_0 \biggl( \frac{\chi_{[\eta e]}}{\mu^r_2([\eta e])}-\frac{\chi_{[\eta e']}}{\mu^r_2([\eta e'])} \biggr)
    & = \frac{1}{\sqrt{r}} \biggl[ \frac{\chi_{[v_1\eta e]}}{\mu^r_2([\eta e])}-\frac{\chi_{[v_1\eta e']}}{\mu^r_2([\eta e'])} \biggr]  \\
    & = \sqrt{r} \biggl[ \frac{\chi_{[v_1\eta e]}}{\mu^r_2([v_1\eta e])}-\frac{\chi_{[v_1\eta e'] }}{\mu^r_2([v_1\eta e'])} \biggr]
\end{split}\]
which is a constant multiple of
$$\frac{\chi_{[v_1\eta e]}}{\mu^r_2([v_1\eta e])}-\frac{\chi_{[v_1\eta e']}}{\mu^r_2([v_1\eta e'])}\; 
$$ 
which is a spanning function for the one-dimensional subspace $E_{v_1\eta}^r$. 
Similarly, $S_1(\frac{\chi_{[\eta e]}}{\mu^r_2([\eta e])}-\frac{\chi_{[\eta e']}}{\mu^r_2([\eta e'])})$ is a constant multiple of
$$\frac{\chi_{[v_2\eta e]}}{\mu^r_2([v_2\eta e'])}-\frac{\chi_{[v_2\eta e']}}{\mu^r_2([v_2\eta' e'])}, $$ 
which spans $E_{v_2 \eta}^r$.  Since all paths of length $n+1$ are of the form $v_i \eta$ for some path $\eta$ of length $n$ and some vertex $v_i$, with $i = 1,2$,
it then follows that
$$\Span_{\eta: |\eta|=n}\{S_0(E_\eta^r), S_1(E_\eta^r)\}=\;\Span_{\eta': |\eta'|=n+1}(E_{\eta'}^r).$$
But this shows that 
$$\tilde{\mathcal W}_{n+1}\;=\Span_{\eta': |\eta'|=n+1}(E_{\eta'}^r),$$ and the induction step of the proof is complete.

\end{proof}

The above results have established the following:

 \begin{thm}
Let $\mu^r_2$ be the Markov probability measure on the infinite path space  $\Lambda_2^{\infty}$ corresponding to the weight assigning $r$ to the vertex $v_1$ and $1-r$ to the vertex $v_2.$ Then for the corresponding representation of ${\mathcal O}_2$ on $L^2(\Lambda_2^{\infty},\mu^r_2)$ defined in Theorem 3.8 of \cite{FGKP-survey}, we have that the $k^{th}$-order wavelets defined there are  all constant multiples of functions of the form 
\[
\frac{\chi_{[\eta e]}}{\mu^r_2([\eta e])}-\frac{\chi_{[\eta e']}}{\mu^r_2([\eta e'])},\;|\eta|=k,\;|e|=|e'|=1,\; r(e)=r(e')=s(\eta).
\]
\end{thm}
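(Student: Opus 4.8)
The plan is to deduce the statement directly from the two lemmas immediately preceding it together with Proposition~\ref{eigenspaceO2unequal}. First I would recall that, by Theorem~3.8 of \cite{FGKP-survey}, the $k$-th order wavelets for this representation of $\mathcal{O}_2$ are exactly the functions $S_w(\psi_{1,j})$, where $j\in\{1,2\}$, $w=w_1w_2\cdots w_k\in(\Z_2)^k$ ranges over all words of length $k$, and $\psi_{1,1},\psi_{1,2}$ are the generating wavelets identified in Proposition~\ref{zerowaveletlemma}. It therefore suffices to show that each such $S_w(\psi_{1,j})$ is a scalar multiple of a single function of the stated form with $|\eta|=k$.

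For this I would invoke the computation already performed inside the proof of the last lemma: there it is shown that $S_0$ and $S_1$ send the one-dimensional space $E_\zeta^r$ (spanned, by Proposition~\ref{eigenspaceO2unequal}, by the difference $\chi_{[\zeta e]}/\mu^r_2([\zeta e])-\chi_{[\zeta e']}/\mu^r_2([\zeta e'])$) to scalar multiples of, respectively, the spanning functions of $E_{v_1\zeta}^r$ and $E_{v_2\zeta}^r$ --- i.e.\ again to single difference functions of the stated type, now with the path lengthened by one edge. Since, by Proposition~\ref{zerowaveletlemma}, the base wavelet $\psi_{1,j}$ is itself a scalar multiple of $\chi_{[v_jv_1]}/\mu^r_2([v_jv_1])-\chi_{[v_jv_2]}/\mu^r_2([v_jv_2])$, which spans $E_{v_j}^r$ (a path of length $0$), an immediate induction on $|w|$ gives that $S_w(\psi_{1,j})$ is a scalar multiple of the spanning function of $E_\eta^r$, where $\eta=v_{w_1+1}v_{w_2+1}\cdots v_{w_k+1}v_j$ is a path of length $k$ (here I use that $\Lambda_2$ has every edge, so any string of vertices is an admissible path).

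Finally, by Proposition~\ref{eigenspaceO2unequal} this spanning function has precisely the announced form $\chi_{[\eta e]}/\mu^r_2([\eta e])-\chi_{[\eta e']}/\mu^r_2([\eta e'])$ with $|\eta|=k$, $|e|=|e'|=1$, $r(e)=r(e')=s(\eta)$; as every $k$-th order wavelet is some $S_w(\psi_{1,j})$, this proves the theorem. I do not anticipate any real obstacle here, since all of the analytic work has been discharged in the preceding results; the only point requiring care is the bookkeeping --- checking that the scalars compose correctly under iterated application of the $S_{w_i}$ and that the assignment $w\mapsto\eta$ described above is a bijection onto the level-$k$ paths, so that no wavelet is missed.
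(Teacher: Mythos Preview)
Your proposal is correct and follows essentially the same route as the paper: the paper states explicitly that ``the above results have established the following'' and gives no separate proof, since the theorem is an immediate consequence of Proposition~\ref{zerowaveletlemma} (the base case) together with the inductive lemma showing $\tilde{\mathcal W}_k=\Span_{|\eta|=k}E_\eta^r$, each $E_\eta^r$ being one-dimensional with the stated spanning function. Your write-up simply makes explicit how these pieces fit together, which is exactly what the paper intends.
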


As in the case of Lemma \ref{zerowaveletlemma}, the constant coefficient needed to transform the wavelet function $S_\eta \psi_{1,k}$ into the spanning function of $E_{\eta v_k}$
can be be computed to be 
$$\sqrt{r(1-r)}[\sqrt{r}]^j[\sqrt{1-r}]^{k+1-j},$$
where $j$ is the number of $v_1$'s appearing as vertices in the path $\eta v_k.$

\end{document}